\providecommand\@dotsep{5}\def\listtodoname{List of Todos}\def\listoftodos{\hypersetup{linkcolor=black}\@starttoc{tdo}\listtodoname\hypersetup{linkcolor=blue}}\makeatother
\numberwithin{equation}{section}
\newtheorem{lemma}{Lemma}[section]
\newtheorem{proposition}{Proposition}[section]
\newtheorem{theorem}{Theorem}[section]
\newtheorem{definition}{Definition}[section]
\theoremstyle{remark}
\newtheorem*{claim}{Claim}
\newcommand{\bel}{\begin{equation} \label}
\newcommand{\ee}{\end{equation}}
\def\beq{\begin{equation}}
\def\eeq{\end{equation}}
\newcommand{\bea}{\begin{eqnarray}}
\newcommand{\eea}{\end{eqnarray}}
\newcommand{\beas}{\begin{eqnarray*}}
\newcommand{\eeas}{\end{eqnarray*}}
\renewcommand{\div}{\mathrm{div}\,}  
\def\C{\mathbb C}
\def\E{\mathcal E}
\def\R{\mathbb R}
\def\Z{\mathbb Z}
\def\Char{\mathrm{Char}}
\def\WF{\mathrm{WF}}
\def\singsupp{\mathrm{singsupp}}
\def\out{\mathrm{out}}
\def\sub{\mathrm{sub}}
\def\sgn{\mathrm{sgn}}
\def\b{\backslash}
\def \into{\mathrm{in}}
\def \sub{\mathrm{sub}}
\def \id{\mathrm{id}}
\def\E{\mathcal E}
\def\W{\mathcal W}
\def\D{\mathbb D}
\def \e{\varepsilon}
\def\l{\left}
\def\r{\right}
\newcommand\rotom{\mho}
\DeclareMathOperator{\supp}{supp}
\date{Compiled \today}
\title
[Inverse connection problem]{Inverse problem for connections in semi-linear wave equations on Lorentzian manifolds}
\author{Lauri Oksanen}
\address{Department of Mathematics and Statistics, University of Helsinki, Helsinki, Finland}
\email{lauri.oksanen@helsinki.fi}
\author{Ruochong Zhang}
\address{Department of Mathematics, Fudan University, 
Handan Road, Shanghai China, No.220.}
\email{22110180055@m.fudan.edu.cn}
\begin{document}

\begin{abstract} This paper recovers Hermitian connections of semi-linear wave equations with cubic nonlinearity. The main novelty is in the geometric generality: we treat the case of an arbitrary globally hyperbolic Lorentzian manifold. Our approach is based on  microlocal analysis of nonlinear wave interactions, which recovers a non-abelian broken light-ray transform, and the inversion of broken light-ray transforms on globally hyperbolic Lorentzian manifolds. 
\end{abstract}

\maketitle

\tableofcontents

\section{Introduction}
Let $(M,g)$ be a time-oriented Lorentzian manifold of dimension $1+n$ for $n\ge 2$ and signature $(-, +,\dots,+)$. Consider the trivial Hermitian vector bundle $E = M\times \C^{m}$, equipped with a bundle metric $\langle\cdot,\cdot\rangle_{E}$, given by the standard Hermitian inner product on $\C^{m}$. Let 
$$
\nabla: C^{\infty}(M;E) \to C^{\infty}(M;T^{\ast}M \otimes E)
$$
be a connection on $E$ that is compatible with the bundle metric. On the trivial Hermitian vector bundle, such a connection can be written as
$$
  \nabla = d+A,
$$
where $d$ denotes the exterior derivative and $A$ is a skew-Hermitian matrix-valued $1-$form. Define the connection wave operator by 
\begin{equation}\label{eqn: connection wave operator}
\Box_{g,A} =(d+A)^{\ast}(d+A),
\end{equation}
where $(d+A)^{\ast}$ denotes the adjoint of $d+A$ with respect to the $L^{2}$ inner product on $L^{2}(M;T^{\ast}M\otimes E)$. We then consider the following semi-linear wave equation with a source:
\begin{equation}\label{eqn: connection wave equation with a source}
   \Box_{g,A} u + |u|^{2}u=f,
\end{equation}
where $u$ is a section of $E$ and $|\cdot|$ denotes the norm induced by the bundle metric.

A typical assumption to ensure that the wave equation \eqref{eqn: connection wave equation with a source} is well posed is that $(M,g)$ is globally hyperbolic. Such Lorentzian manifolds are characterized via causal structures, as described in \cite{Beem-Ehrlich-Easley,O}. For $x,y\in M$, the notation $x<y$ indicates the existence of a future-pointing causal curve from $x$ to $y$, while $x\ll y$ indicates the existence of a future-pointing timelike curve from $x$ to $y$. We write $x\le y$ if $x=y$ or $x<y$.

Define the causal future and past of a point $x\in M$ respectively by 
$$
  J^{+}(x) = \{y\in M; x\le y\},\quad\text{and}\quad J^{-}(x) = \{y\in M; y\le x\}.
$$
Similarly, define the chronological future and chronological past by
$$
  I^{+}(x) =\{y\in M; x\ll y\},\quad\text{and}\quad I^{-}(x) = \{y\in M; y\ll x\}.
$$
Given a set $A\subseteq M$, define the causal and chronological futures and pasts of $A$ by
$$
  J^{\pm}(A) = \bigcup_{a\in A}J^{\pm}(a)\quad\text{and}\quad I^{\pm}(A) = \bigcup_{a\in A}I^{\pm}(a).
$$

It was shown in \cite{Bernal-Sanchez-2007} that a time-oriented Lorentzian manifold $(M,g)$ is \textit{globally hyperbolic} if and only if $M$ is noncompact and the causal diamond $J^{+}(p)\cap J^{-}(q)$ is compact in $M$ for all $p,q\in M$.

 Under the assumption of globally hyperbolicity, $(M,g)$ is isometric to $\R\times N$.  Let $(x^{0},x^{1},x^{2},x^{3})$ be the local coordinates on $(M,g)$, where $t:=x^0$ represents time and $x^{\prime}:=(x^{1},x^{2},x^{3})$ represents spatial coordinates. The metric admits a decomposition into time and space
$$
   g = -\beta(t,x^{\prime})dt^{2} + g_0(t,x^{\prime}),
$$
where $\beta$ is a smooth positive function and $g_0(t,\cdot)$ is a family of Riemannian metrics on $N$, smoothly depending on $t$. For each fixed $t$, the slice $\{t\}\times N$ is a Cauchy hypersurface. The Cauchy problem for the wave equation, with initial data given on a Cauchy hypersurface, is well-posed, see \cite[Theorem 12.19]{Ringstrom}.

\subsection{Forward problem}
Consider the Cauchy problem for a semi-linear wave equation on a globally hyperbolic manifold $(M,g)$, 
\begin{equation}\label{eqn: semi-linear wave equation on a globally hyperbolic manifold}
    \begin{aligned}
       \begin{cases}
       \Box_{g,A} u(x) + |u(x)|^{2}u(x) = f(x),\quad &x\in (0,+\infty)\times N,\\
        u(0,x^{\prime} ) = 0, \partial_{t}(0,x^{\prime}) = 0,\quad &x^{\prime} \in N.  
       \end{cases}
    \end{aligned}
\end{equation}

Denote by $B(0,d)$ the open geodesic ball on the Riemannian manifold $(\{0\}\times N, g_{0}(0,\cdot))$, which is geodesically convex for sufficiently small $\rho>0$.
\begin{equation}\label{eqn: mou}
   \rotom = (0,T)\times B(0,\rho).
\end{equation}
The local well-posedness of semi-linear wave equations is well-established, see, for example, \cite[Theorem I-III]{Hughes-Kato-Marsden-1976}, \cite[Theorem 6]{K} and \cite[Theorem 6.3.1]{Rauch-book}. Let $r_{\rotom}>0$ be sufficiently small. We define the space of admissible sources as
$$
  \mathscr{C}_{\rotom} = \l\{f\in H^{k}(M;E); k\ge 4, \supp f \subseteq \rotom , \|f\|_{H^{k}(M)} \le r_{\rotom}\r\}.
$$
Given $f\in \mathscr{C}_{\rotom}$, the Cauchy problem \eqref{eqn: semi-linear wave equation on a globally hyperbolic manifold} admits a unique solution
$$
  u \in H^{k+1}((0,T)\times N; E).
$$
Consequently, the source-to-solution map, which models measurements, is well-defined.
\begin{equation}\label{eqn: source-to-solution map}
   L_{g,A} :  \mathscr{C}_{\rotom} \longrightarrow H^{k+1}(\rotom;E),\quad L_{g,A}(f) = u|_{\rotom},\quad\text{for all }f\in \mathscr{C}_{\rotom}.
\end{equation}

\subsection{Main theorem}
Given the metric $g$ and the source-to-solution map $L_{g,A}$, the inverse problem is to recover the connection $A$ on the causal diamond
\begin{equation}\label{eqn: causal diamond}
  \D = J^{+}(\rotom) \cap J^{-}(\rotom), 
\end{equation}
which is the maximal region where $A$ can be determined due to the finite speed of propagation. The connection $A$ can be recovered only up to a natural gauge transformation
\begin{equation}\label{eqn: gauge transformation}
   A \triangleleft \boldsymbol{\varphi}  = \boldsymbol{\varphi}^{-1}d\boldsymbol{\varphi} + \boldsymbol{\varphi}^{-1}A\boldsymbol{\varphi},
\end{equation}
where
\begin{equation}\label{eqn: natural gauge}
\boldsymbol{\varphi} \in C^{\infty}(\D;U(n))\quad\text{and}\quad \boldsymbol{\varphi}|_{\rotom} = \id.
\end{equation}

Furthermore, since $\rotom$ is the obeservation region, it is natural to assume that $A$ is known a priori in $\rotom$.

\begin{theorem}\label{thm: main theorem}
   Let $A$ and $B$ be two Hermitian connections such that $L_{g,A} = L_{g,B}$ and $A=B$ on $\rotom$. Then, there exists a gauge transformation $\boldsymbol{\varphi}$ satisfying \eqref{eqn: natural gauge} such that
   $$
  A = B \triangleleft \boldsymbol{\varphi}.
   $$
\end{theorem}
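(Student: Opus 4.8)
The approach is the \emph{higher-order linearization} scheme combined with a microlocal analysis of nonlinear wave interactions for the connection wave operator, followed by an inversion of the resulting broken light-ray transform. Fix $f_1,f_2,f_3\in\mathscr{C}_{\rotom}$ and small parameters $\e=(\e_1,\e_2,\e_3)$, and let $u(\cdot\,;\e)$ solve \eqref{eqn: semi-linear wave equation on a globally hyperbolic manifold} with source $\e_1f_1+\e_2f_2+\e_3f_3$. Since the nonlinearity is cubic, the first variations $v_j:=\partial_{\e_j}u|_{\e=0}$ solve the linear problem $\Box_{g,A}v_j=f_j$ with zero Cauchy data, the second variations vanish, and $w:=\partial_{\e_1}\partial_{\e_2}\partial_{\e_3}u|_{\e=0}$ solves
\begin{equation*}
  \Box_{g,A}w=-\sum_{\sigma\in S_3}\pair{v_{\sigma(1)},v_{\sigma(2)}}_E\,v_{\sigma(3)},\qquad w|_{t=0}=\partial_tw|_{t=0}=0 .
\end{equation*}
As $w|_{\rotom}=\partial_{\e_1}\partial_{\e_2}\partial_{\e_3}L_{g,A}(\e_1f_1+\e_2f_2+\e_3f_3)|_{\e=0}$, the hypothesis $L_{g,A}=L_{g,B}$ forces the corresponding $w$'s for $A$ and $B$ to agree on $\rotom$ for all such $f_j$.

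Next, choose the $f_j$ so that the $v_j$ are distorted plane waves, built microlocally along three null geodesics issuing from points $p_1,p_2,p_3\in\rotom$, chosen so that these geodesics meet at a common $q\in\D$ with linearly independent null conormals $\zeta_1,\zeta_2,\zeta_3$ whose sum $\zeta$ is again a nonzero null covector; such configurations are available for a rich family of $q\in\D$. The right-hand side above is then conormal to $\{q\}$, and applying the causal inverse $\Box_{g,A}^{-1}$ — a Fourier integral operator whose amplitude transport along null bicharacteristics is $\nabla$-parallel transport (up to the scalar half-density factor present already for $\Box_g$) — produces, besides less singular terms, a conormal wave along the null geodesic from $(q,\zeta)$. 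Evaluating its principal symbol at a point $y$ of that geodesic lying in $\rotom$ yields, up to a nonvanishing scalar common to $A$ and $B$, an expression of the form
\begin{equation*}
  \mathscr{P}^{A}_{q\to y}\Big(\textstyle\sum_{\sigma\in S_3}\pair{\mathscr{P}^{A}_{p_{\sigma(1)}\to q}\,a_{\sigma(1)},\ \mathscr{P}^{A}_{p_{\sigma(2)}\to q}\,a_{\sigma(2)}}_E\ \mathscr{P}^{A}_{p_{\sigma(3)}\to q}\,a_{\sigma(3)}\Big),
\end{equation*}
where $\mathscr{P}^{A}_{a\to b}$ is the unitary $\nabla$-parallel transport along the relevant null geodesic and $a_j\in\C^m$ are the freely chosen polarizations of the $v_j$ at $p_j$.

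By the first step this symbol coincides for $A$ and $B$. Varying $a_1,a_2,a_3$ and the configuration — e.g.\ by a limiting argument in which $p_2,p_3$ and their null geodesics converge, so that $\mathscr{P}^{A}_{p_2\to q}$ and $\mathscr{P}^{A}_{p_3\to q}$ merge and the pairing of the merged factors reduces to $\pair{a_2,a_3}_E$ by unitarity — one isolates, for every admissible broken null geodesic $\gamma$ from a point of $\rotom$ to a vertex $q\in\D$ and then to a point of $\rotom$, the parallel transport $\mathscr{P}^{A}_{\gamma}$ along $\gamma$. This is the non-abelian broken light-ray transform of $A$, and the above shows $\mathscr{P}^{A}_{\gamma}=\mathscr{P}^{B}_{\gamma}$ for all such $\gamma$. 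Finally, invoking the injectivity up to gauge of the non-abelian broken light-ray transform on globally hyperbolic Lorentzian manifolds — the second ingredient announced in the abstract — together with $A=B$ on $\rotom$, one obtains a gauge transformation $\boldsymbol{\varphi}$ satisfying \eqref{eqn: natural gauge} with $A=B\triangleleft\boldsymbol{\varphi}$, which is the assertion of the theorem.

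The crux is the second step: establishing a clean symbol calculus for the threefold interaction of distorted plane waves under $\Box_{g,A}$ on an \emph{arbitrary} globally hyperbolic manifold, where analyticity is unavailable and one must deal with caustics, conjugate points, and the possibility that $q$ is joined to $\rotom$ by several distinct null geodesics, and must verify that the leading singularity of $w$ detected in $\rotom$ really originates at $q$ and carries the parallel-transport structure above. A further point is to confirm that the broken null geodesics reachable from sources supported in the small set $\rotom$, with vertices ranging over $\D$, form a class rich enough for the inversion step to apply.
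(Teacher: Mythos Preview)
Your proposal is correct and follows essentially the same route as the paper: three-fold linearization with distorted plane-wave sources, a principal-symbol computation for the cubic interaction at the intersection point (including the limiting argument $r\to0$ collapsing the incoming geodesics to recover the broken light-ray transform), and then the inversion of that transform up to gauge. The technical issues you flag in your last paragraph---controlling spurious singularities so that the leading singularity at $(z,\zeta)$ genuinely originates at the vertex, and ensuring that enough broken null geodesics with endpoints in $\rotom$ and vertices in $\D$ exist---are exactly the points the paper handles via Lemma~\ref{lemma: propagation of regularities} and Lemma~\ref{lemma: observation points}, respectively.
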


\subsection{Previous literature}
Inverse problems for linear wave equations have been extensively studied. The Boundary Control (BC) method, developed by Belishev \cite{Belishev}, see also \cite{BK}, combined with the unique continuation results of Tataru \cite{T}, provides a powerful framework for recovering time-independent coefficients in general hyperbolic equations. Kurylev, Oksanen and Paternain \cite{Kurylev-Oksanen-Paternain-JDG-2018} recovered time-independent Hermitian connections by BC-method. However, this approach fails in the case of time-dependent coefficients since unique continuation does not hold, as shown by the counterexample of Alinhac \cite{A}.

A breakthrough in the case of nonlinear wave equations was achieved by Kurylev, Lassas, and Uhlmann \cite{KLU}, who introduced a method based on higher-order linearization and microlocal analysis of nonlinear wave interactions, enabling the recovery of time-dependent coefficients in nonlinear hyperbolic equations. Nevertheless, the reconstruction of time-dependent coefficients for linear wave equations remains an open problem. There are some important results on such problem under specific geometric assumptions. We refer the readers to \cite{Alexakis-Feizmohammadi-Oksanen-2022, Alexakis-Feizmohammadi-Oksanen-2024,Oksanen-Rakesh-Salo-2024,Stefanov-Yang-APDE,Stefanov-1989} for these results.

In contrast to leading-order coefficients in linear terms and the nonlinearities, lower-order coefficients---such as connections and potentials---are more difficult to observe, as they have weaker effect on the associated soluions. Chen, Lassas, Oksanen, and Paternain \cite{CLOP,CLOP-CMP,CLOP-Annalen} utilized broken light-ray transforms to reconstruct connections. Feizmohammadi and Oksanen \cite{FO} used Gaussian beams and introduced  truncated integral transforms to recover potentials. Chen, Lu, and Zhang \cite{Chen-Lu-Zhang-2025} introduced the method of lower-order symbol calculus to recover potentials.

We refer to \cite{FLO,Hintz-Uhlmann-Zhai-CPDE,Hintz-Uhlmann-Zhai-IMRN,Kian,LUW,SaBarreto-Stefanov,SaBarreto-Uhlmann-Wang,WZ} for recent advances in reconstruction of time-dependent coefficients for nonlinear wave equations, and to \cite{KLOU,Uhlmann-Wang-CPAM,Uhlmann-Zhai-JMPA,Uhlmann-Zhai-Annalen,Uhlmann-Zhang} for physical applications.

Here, we recover Hermitian connections on a globally hyperbolic Lorentzian manifold from the source-to-solution map and the underlying Lorentzian metric, thereby generalizing the result of \cite{CLOP}, which addresses the Minkowski space case. The main challenges in our setting stem from the presence of cut points and conjugate points in a general Lorentzian geometry, which causes difficulties in the microlocal analysis of wave interactions, as well as from the need to invert the broken light-ray transforms in curved spacetimes. While the inversion of the broken non-Abelian X-ray transforms has recently been achieved on a Riemannian manifold \cite{St-Amant-2024}, the corresponding problem for the broken light-ray transform on a Lorentzian manifold is solved here.

This paper is organized as follows. Section \ref{sec: broken light-ray} addresses the inversion of the broken light-ray transform on a globally hyperbolic Lorentzian manifold $(M,g)$. Section \ref{sec: linear waves} formulates the connection wave equation in local coordinates and reviews the necessary background on Lagrangian distributions and intersecting pairs of Lagrangian distributions, which are used to analyze the propagation of singularities for a  linear wave equation with a connection. Section \ref{sec: nonlinear wave interactions}  reduces the recovery problem from the source-to-solution map to the inversion of the broken light-ray transform, thereby completing the proof of the main theorem.

\section{Broken light-ray transform}\label{sec: broken light-ray}
\subsection{Lorentzian geometry}
In this section, we introduce some notation and collect some results on globally hyperbolic Lorentzian manifolds $(M,g)$ that will be used in the proof of Theorem \ref{thm: main theorem}. The general references are \cite{Beem-Ehrlich-Easley,Minguzzi,O}.

 Fix a point $p\in M$. Let $L_{p}^{+}M \subseteq T_{p}M$ denote the set of future-pointing lightlike (null) tangent vectors at $p$, and let $L_{p}^{\ast,+}M \subseteq T_{p}^{\ast}M$ denote the set of future-pointing lightlike covectors. Define the bundles of future-pointing null vectors and covectors by 
 $$
 L^{+}M := \bigcup_{p\in M}L^{+}_{p}M,\quad L^{\ast,+}M := \bigcup_{p\in M}L_{p}^{\ast,+}M.
 $$ 
 Similarly, define the bundles of past-pointing lightlike vectors and covectors $L^{-}M$ and $L^{\ast,-}M$, respectively. The null tangent bundle is given by
 $$
   LM := L^{\ast}M \cup L^{-}M,
 $$
 and the null cotangent bundle is deined by 
 $$
   L^{\ast}M: = L^{\ast,+}M \cup L^{\ast,-}M.
  $$

Let $\alpha: [a,b]\to M$ be a future-pointing, piecewise smooth causal curve. The length of $\alpha$ is defined by
$$
L(\alpha) = \int_{a}^{b}\sqrt{-g(\dot{\alpha}(s),\dot{\alpha}(s))}ds.
$$
Given $x\le y$, the time separation function at $(x,y)\in M\times M$ is defined as
$$
  \tau(x,y) = \sup\{L(\alpha): \alpha \text{ is a future-pointing causal curve from $x$ to $y$} \}.
$$
According to \cite[Lemma 14.21]{O}, the function $\tau(x,y)$ is continuous on $M\times M$, provided that $(M,g)$ is globally hyperbolic. Furthermore, by  \cite[Lemma 14.19]{O}, if $x<y$, then there exists a future-pointing causal geodesic $\gamma$ from $x$ to $y$ such that 
$$
  L(\gamma) = \tau(x,y).
$$

Given $(x,v)\in L^{+}M$, the lightlike geodesic with initial value $(x,v)$ is denoted by $\gamma_{x,v}(s)$, and is defined by
$$
  \gamma_{x,v}(0) = x,\quad\text{and}\quad \dot{\gamma}_{x,v}(0) = v.
$$

It is important to note that a globally hyperbolic Lorentzian manifold is generally not geodesically complete.  Therefore, we define the maximal time of existence of the geodesic $\gamma_{x,v}$ as 
\begin{equation}\label{eqn: maximal interval}
  \mathcal{T}(x,v) = \sup\{r : \text{the geodesic } \gamma_{x,v}: [0,r) \text{ is well-defined} \}.
\end{equation}
Additionally, define the null cut function \cite[Definition 9.32]{Beem-Ehrlich-Easley} as
$$
  \rho(x,v) = \sup\{s\in [0,\mathcal{T}(x,v)): \tau(x,\gamma_{x,v}(s)) = 0\}.
$$
If $\rho(x,v)< \mathcal{T}(x,v)$, the point $p(x,v) := \gamma_{x,v}(\rho(x,v))$ exists. It is the first cut point along the geodesic $\gamma_{x,v}$.

The following Lemma \cite[Lemma 6.5]{FLO} establishes the uniquness of causal curves before the first cut point.
\begin{lemma}\label{lemma: only causal curve}
 Let $(x,v)\in L^{+}M$ and $s_0<\rho(x,v)$. Then the geodesic segment $\gamma_{x,v}([0,s_0])$ is the unique causal curve from $x$ to $\gamma_{x,v}(s_0)$.
\end{lemma}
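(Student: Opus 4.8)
The plan is to reduce the claim to a statement about null pregeodesics and then to exploit the threshold property of the null cut function $\rho$. Write $q=\gamma_{x,v}(s_0)$ and assume $s_0>0$, the case $s_0=0$ being trivial. I would first record that $\tau(x,q)=0$. Indeed, if $x\ll\gamma_{x,v}(s_1)$ for some $s_1$, then the segment $\gamma_{x,v}|_{[s_1,s_2]}$ shows $\gamma_{x,v}(s_1)\le\gamma_{x,v}(s_2)$ for $s_2>s_1$, hence $x\ll\gamma_{x,v}(s_2)$; therefore $\{s\in[0,\mathcal T(x,v)):\tau(x,\gamma_{x,v}(s))=0\}$ is an interval containing $0$, and so it contains every $s<\rho(x,v)$, in particular $s_0$. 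Now let $\alpha$ be any future-pointing piecewise smooth causal curve from $x$ to $q$. Then $L(\alpha)\le\tau(x,q)=0$, so $L(\alpha)=0$, and by the standard fact that a causal curve of zero Lorentzian length between distinct points is a null geodesic up to reparametrization --- otherwise it can be deformed to a timelike curve with the same endpoints, contradicting $\tau(x,q)=0$; see \cite[Ch.~10]{O} --- the curve $\alpha$ coincides, after reparametrization, with $\gamma_{x,w}|_{[0,s_1]}$ for some $w\in L^{+}_{x}M$ with $\gamma_{x,w}(s_1)=q$.

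It then remains to show $\gamma_{x,w}([0,s_1])=\gamma_{x,v}([0,s_0])$, and here I would argue by contradiction. Since $s_0<\rho(x,v)\le\mathcal T(x,v)$, the geodesic $\gamma_{x,v}$ exists slightly past $q$; for small $\e>0$ set $q_\e=\gamma_{x,v}(s_0+\e)$ and let $\beta_\e$ be the concatenation of $\gamma_{x,w}|_{[0,s_1]}$ with $\gamma_{x,v}|_{[s_0,s_0+\e]}$, a future-pointing causal curve from $x$ to $q_\e$. Both pieces of $\beta_\e$ are geodesics, so $\beta_\e$ is a null pregeodesic if and only if $\dot\gamma_{x,w}(s_1)$ is a positive multiple of $\dot\gamma_{x,v}(s_0)$, a condition that does not depend on $\e$. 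If this condition fails, then $\beta_\e$ has a genuine corner at $q$ for every small $\e$, hence may be deformed to a timelike curve from $x$ to $q_\e$; thus $x\ll q_\e$, and by the monotonicity noted above $\tau(x,\gamma_{x,v}(s))>0$ for all $s\in[s_0+\e,\mathcal T(x,v))$, forcing $\rho(x,v)\le s_0+\e$. Letting $\e\downarrow0$ gives $\rho(x,v)\le s_0$, contradicting $s_0<\rho(x,v)$.

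Therefore $\dot\gamma_{x,w}(s_1)$ is a positive multiple of $\dot\gamma_{x,v}(s_0)$; after an affine reparametrization these velocities agree, so $\gamma_{x,w}$ and $\gamma_{x,v}$ are geodesics with the same position and velocity at a common parameter value and thus coincide on the intersection of their maximal domains. Since $(M,g)$ is globally hyperbolic it admits no nonconstant closed causal curve, so the common subarc from $x$ to $q$ is traversed only once and $\gamma_{x,w}([0,s_1])=\gamma_{x,v}([0,s_0])$, which is the assertion. The heart of the matter --- and the step I expect to require the most care --- is the interplay between the threshold property of the null cut function and the corner-rounding (``rounding-off'') lemma: this is in effect the Lorentzian counterpart of the injectivity of the exponential map strictly inside the cut locus, and the preliminary reduction to a null pregeodesic is what lets the corner analysis be carried out at the single point $q$, since a pregeodesic has no corners and hence cannot share an initial arc with $\gamma_{x,v}$ and then branch off.
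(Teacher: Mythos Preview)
The paper does not give its own proof of this lemma; it simply quotes it as \cite[Lemma 6.5]{FLO}. Your argument is a correct and standard one: you first use the threshold property of the null cut function to deduce $\tau(x,q)=0$, then invoke the fact that any causal curve between causally related but not chronologically related points must be a null pregeodesic, and finally use the corner-rounding lemma on a slight extension past $q$ to force the two null geodesics to share their terminal velocity at $q$, hence to coincide. Each of these ingredients is classical (see, e.g., \cite[Ch.~10]{O} and \cite[Theorem 9.15]{Beem-Ehrlich-Easley}), and you have assembled them properly; in particular, the extension past $q$ is legitimate because $s_0<\rho(x,v)\le\mathcal T(x,v)$, and the absence of closed causal curves in a globally hyperbolic manifold rules out the pathology of $\gamma_{x,v}$ revisiting $x$.

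Since the paper offers no proof to compare against, there is nothing further to contrast; your write-up would serve as a self-contained substitute for the citation.
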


Moreover, the null cut function $\rho$ is lower semi-continuous on a globally hyperbolic Lorentzian manifold, as shown in \cite[Theorem 9.33]{Beem-Ehrlich-Easley}.  

The following lemma \cite[Lemma 6.8]{FLO} shows that the null cut function is symmetric and hence can be naturally extended to $L^{-}M$.
\begin{lemma}\label{lemma: symmetry of the null cut function}
 Suppose $(x,v)\in L^{+}M$ and let $(y,w) = (\gamma_{x,v}(\rho(x,v)),\dot{\gamma}_{x,\xi}(\rho(x,v)))$, assuming this point is well-defined. Then,
 $$
    \rho(y,-w) = \rho(x,v).
 $$
\end{lemma}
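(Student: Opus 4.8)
The plan is to fix $\ell = \rho(x,v)$, $y = \gamma_{x,v}(\ell)$, $w = \dot\gamma_{x,v}(\ell)$ and to compute $\rho(y,-w)$ directly, using the natural extension of $\rho$ to $L^{-}M$, namely $\rho(q,u) = \sup\{s\in[0,\mathcal{T}(q,u)) : \tau(\gamma_{q,u}(s),q)=0\}$ for $(q,u)\in L^{-}M$ (equivalently, $\rho$ computed in the reversed time orientation). Since $\gamma_{y,-w}(s) = \gamma_{x,v}(\ell-s)$ and $x$ is an interior point of $M$, the geodesic extends slightly backward past $x$, so $\mathcal{T}(y,-w)>\ell$. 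The reverse triangle inequality for $\tau$ shows that if $\tau(\gamma_{x,v}(\sigma),y)=0$ and $\sigma\le\sigma'\le\ell$ then $\tau(\gamma_{x,v}(\sigma'),y)=0$; hence $S := \{\sigma\in(\ell-\mathcal{T}(y,-w),\ell] : \tau(\gamma_{x,v}(\sigma),y)=0\}$ is an interval with right endpoint $\ell$, and $\rho(y,-w) = \ell - \inf S$. So everything reduces to showing $[0,\ell]\subseteq S$ and $S\cap(\ell-\mathcal{T}(y,-w),0)=\emptyset$, i.e.\ $\inf S = 0$.

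The first containment ($\rho(y,-w)\ge\ell$) is the easy direction. For $\sigma\in[0,\ell)$, Lemma~\ref{lemma: only causal curve} says $\gamma_{x,v}|_{[0,s]}$ is the unique causal curve from $x$ to $\gamma_{x,v}(s)$ for $s<\ell$; concatenating $\gamma_{x,v}|_{[0,\sigma]}$ with a putative causal curve from $\gamma_{x,v}(\sigma)$ to $\gamma_{x,v}(s)$ and using this uniqueness forces the latter to be $\gamma_{x,v}|_{[\sigma,s]}$, which has length $0$, so $\tau(\gamma_{x,v}(\sigma),\gamma_{x,v}(s))=0$; letting $s\to\ell^{-}$ and using continuity of $\tau$ gives $\tau(\gamma_{x,v}(\sigma),y)=0$. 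The case $\sigma=\ell$ is trivial.

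The second, harder assertion ($\rho(y,-w)\le\ell$) asks that for every $\sigma\in(\ell-\mathcal{T}(y,-w),0)$ — i.e.\ on the backward continuation of the geodesic past $x$ — one has $\gamma_{x,v}(\sigma)\ll y$, equivalently $\tau(\gamma_{x,v}(\sigma),y)>0$. Here I would use the standard description of the first null cut point: since $y = \gamma_{x,v}(\ell)$ is the first cut point of $x$ along $\gamma_{x,v}$, either (i) $y$ is conjugate to $x$ along $\gamma_{x,v}$, or (ii) there is a null geodesic $\eta$ from $x$ to $y$ whose image differs from that of $\gamma_{x,v}|_{[0,\ell]}$. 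In case (ii), for $\sigma<0$ the concatenation of $\gamma_{x,v}|_{[\sigma,0]}$ with $\eta$ is a future-directed causal curve from $\gamma_{x,v}(\sigma)$ to $y$ with a genuine corner at $x$ (the initial velocity of $\eta$ is not a positive multiple of $v$, else $\eta$ would trace $\gamma_{x,v}$), hence not a null pregeodesic, and a causal curve that is not a null pregeodesic has chronologically related endpoints, so $\gamma_{x,v}(\sigma)\ll y$. In case (i), conjugacy is symmetric, so $x$ is conjugate to the endpoint $y$ of the reversed geodesic segment $\gamma_{y,-w}|_{[0,\ell-\sigma]}$, and it is a strictly interior conjugate point since $0<\ell<\ell-\sigma$; a null geodesic that is continued past an interior conjugate point of an endpoint has chronologically related endpoints, so $\gamma_{x,v}(\sigma) = \gamma_{y,-w}(\ell-\sigma)\in I^{-}(y)$. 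Either way $\gamma_{x,v}(\sigma)\notin S$, and combining with the first part gives $\inf S = 0$, hence $\rho(y,-w) = \ell = \rho(x,v)$.

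The hard part is case (i) of the second assertion: this is where one needs genuinely Lorentzian input beyond Lemma~\ref{lemma: only causal curve}, namely the symmetry of conjugate points and the fact that a null geodesic ceases to be achronal once it is continued past a conjugate point of an endpoint (see e.g.\ O'Neill, \emph{Semi-Riemannian Geometry}, Ch.~10, or Beem--Ehrlich--Easley); one also uses the first-cut-point dichotomy (i)/(ii), standard but not recalled in the excerpt. Routine technical points are the identity $\mathcal{T}(y,-w)>\ell$ (because $x$ is interior) and the observation that $\ell=\rho(x,v)>0$ always — global hyperbolicity implies strong causality, so short null geodesic segments are achronal — which rules out a degenerate case in the argument above.
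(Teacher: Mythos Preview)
The paper does not prove this lemma; it is quoted verbatim from \cite[Lemma~6.8]{FLO} without argument. Your proposal supplies a correct self-contained proof. The inequality $\rho(y,-w)\ge\ell$ follows cleanly from Lemma~\ref{lemma: only causal curve} and continuity of $\tau$, as you argue. For the reverse inequality you invoke two standard Lorentzian facts that are not recalled in the paper but are valid in the globally hyperbolic setting: the null cut-point dichotomy (the first null cut point is either conjugate to the initial point or reached by a second null geodesic; see e.g.\ Beem--Ehrlich--Easley, Chapter~9), and the variational result that a null geodesic continued strictly past a point conjugate to an endpoint has chronologically related endpoints (e.g.\ O'Neill, Chapter~10). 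Your treatment of case~(ii) via the broken causal curve with a genuine corner at $x$ is correct, as is the symmetry-of-conjugacy step in case~(i). The auxiliary observations $\mathcal{T}(y,-w)>\ell$ and $\rho(x,v)>0$ are routine as you note. This is essentially the argument one finds in \cite{FLO}; the paper simply defers to that reference rather than reproducing it.
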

In addition, both Lemma \ref{lemma: only causal curve} and the lower semi-continuity of $\rho(x,v)$ remain valid for $(x,v)\in L^{-}M$, see \cite[Section 6.2]{FLO}.

\subsection{Parallel transport}
Let $\gamma_{x,v}:[0,s_0]\to M$ be a lightlike geodesic such that $(x,v)\in L^{\pm}M$ and $0< s_0 < \mathcal{T}(x,v)$. Let $\langle\cdot,\cdot\rangle$ denote the natural pairing between covectors and vectors, and let $I_{n}$ denote the $n\times n$ identity matrix. Define $U^{A}: [0,s_0] \to U(n)$ to be the solution of the transport equation 
\begin{equation}\label{eqn: equation of parallel transport}
    \begin{aligned}
      \begin{cases}
           \partial_{s}U^{A}(s)+ \langle A(\gamma(s)),\dot\gamma(s)\rangle U^{A}(s) = 0, \quad s\in(0,s_0)\\
           U^{A}(0) = I_{n},
      \end{cases}
    \end{aligned}
\end{equation}
where, for notational convenience, $\gamma(s) := \gamma_{x,v}(s)$. The parallel transport map associated with $A$ along $\gamma$ from $\gamma(0)$ to $\gamma(s_0)$ is defined by
\begin{equation}\label{eqn: parallel transport map}
   \mathbf{P}^{A}_{\gamma([0,s_0])} = U^{A}(s_0).
\end{equation}
The solution $U^{A}(s^{\prime}_0)$ is also well-defined for $s^{\prime}_0<0$ as long as the geodesic $\gamma:[s_{0}^{\prime},0]\to M$ is well-defined. In this case, the parallel transport map along $\gamma$ from $\gamma(0)$ to $\gamma(s^{\prime}_0)$ by
$$
  \mathbf{P}^{A}_{\gamma_{x,v}([s_{0}^{\prime},0])} := U^{A}(s_{0}^{\prime}).
$$
A change of variables in the transport equation \eqref{eqn: equation of parallel transport} yields the identity
\begin{equation}\label{eqn: minus parameter parallel transport}
  \mathbf{P}^{A}_{\gamma_{y,-v}([-s_0,0])} = \mathbf{P}^{A}_{\gamma_{y,v}([0,s_0])},\quad s_0\in (0,\rho(x,v)).
\end{equation}

Moreover, the inverse matrix $U^{A}(s)^{-1}: [0,s_0] \to U(n)$ satisfies the following transport equation
   \begin{equation}\label{eqn: parallel transport is invertible}
    \begin{aligned}
      \begin{cases}
           \partial_{s}\l(U^{A}(s)\r)^{-1} - U^{A}(s)^{-1} \langle A(\gamma(s)),\dot\gamma(s)\rangle  = 0,\quad s\in (0,s_0)\\
           U^{A}(0)^{-1} = I_{n}.
      \end{cases}
    \end{aligned}
\end{equation}

Generally,  we define the parallel transport along $\gamma(s)$ from $a$ to $b$ for $a<b$, provided that $\gamma(s)$ is well defined on a neighborhood of $[a,b]$ as 
\begin{equation}\label{eqn: parallel transport map in a general parameter}
\mathbf{P}^{A}_{\gamma([a,b])} = \mathbf{P}^{A}_{\tilde{\gamma}([0,b-a])},\quad\text{where}\quad \tilde{\gamma}(s) = \gamma(s+a). 
\end{equation}
Under a change of variable of the parallel transport equation \eqref{eqn: equation of parallel transport}, $\mathbf{P}^{A}_{\gamma([a,b])}$ is the solution for
\begin{equation}\label{eqn: equation for parallel transport map in a general parameter}
  \begin{aligned}
    \begin{cases}
\partial_{s} U^{A}(s;a)+ \langle A(\gamma(s)),\dot\gamma(s)\rangle U^{A}(s;a) = 0, \quad s\in(a,b)\\
           U^{A}(a;a) = I_{n},
    \end{cases}
  \end{aligned}
\end{equation}
Then, the parallel transport map satisfies the following group property.
 \begin{equation}\label{eqn: patching property of the parallel transport map}
   \mathbf{P}^{A}_{\gamma([b,c])}\mathbf{P}^{A}_{\gamma([a,b])} = \mathbf{P}^{A}_{\gamma([a,c])}.
 \end{equation}

The composition of two parallel transport maps along two lightlike geodesics that intersect at exact one point is referred to as the broken light-ray transform. We define this transform specifically in the form needed for the proof of Theorem \ref{thm: main theorem}. We illustrate the parallel transport through the following picture.

\begin{figure}[htbp]
	\centering
	\begin{minipage}{0.45\textwidth}
		\centering
		\begin{tikzpicture}[scale=1]
  \draw[blue] (0,0) ellipse (1 and 0.3);
  \draw[blue] (0,-5) ellipse (1 and 0.3);
  \draw[blue] (-1,0) -- (-1,-5);
  \draw[blue] (1,0) -- (1,-5);

  \draw (1,0) -- (3.5,-2.5);
  \draw (1,-5) -- (3.5,-2.5);
  \draw (-1,0) -- (-3.5,-2.5);
  \draw (-1,-5) -- (-3.5,-2.5);

  \draw  (0,-2.5) ellipse (3.5 and 0.3);


  \draw[red, thick, dashed] (0.5,0) -- (0.5,-5);
  \draw[red, thick, dashed] (0,0) -- (0,-5);


  \fill (0.5,-4) circle (2pt); 
   \node[right] at (0.5,-4) {$x$}; 

    \fill (0,-1) circle (2pt); 
   \node[right] at (0,-1) {$z$}; 

    \fill (2,-2.5) circle (2pt); 
   \node[right] at (2,-2.5) {$y$}; 


   \draw[thick] (0.5,-4) -- (2,-2.5);
    \draw[thick] (0,-1) -- (2,-2.5);

             \end{tikzpicture}

		\label{fig:figure1}
	\end{minipage}
    \caption{The blue cylinder is $\rotom$ and the black revolution is $\D$. The parallel transport is from $x$ to $y$ and then $y$ to $z$ through the black broken light-like geodesic. The dashed lines are world lines $\mu_{a_1}$ and $\mu_{a_2}$ through $x$ and $z$, respectively.}
\end{figure}
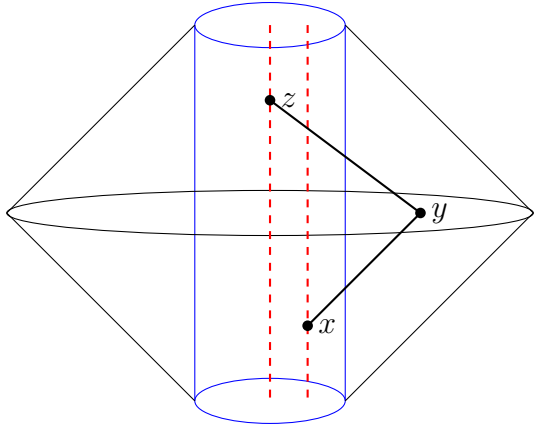

Recall the definition of $\rotom$ given in \ref{eqn: mou}. The set $\rotom$ can be foliated by world lines of the form  $\mu_{a}(s) = (s,a)$, with $a\in B(0,d)$. That is,
$$
  \rotom = \bigcup_{a\in B(0,d)}\mu_{a}((0,T)).
$$
Therefore, for any point $y\in \D$ defined in \eqref{eqn: causal diamond}, there exists $a_1,a_2\in B(0,d)$ such that
\begin{equation}\label{eqn: causal relation}
  y\in J^{+}(\mu_{a_1}(0,T)) \cap J^{-}(\mu_{a_2}(0,T)).
\end{equation}

Define the earliest observation time functions on $\D$, following \cite[Definition 2.1]{KLU}, by
 \begin{align*}
       f_{a}^{+}(y) &= \inf\l( \{s\in (0,T);\ \tau(y,\mu_{a}(s))>0\}\cup\{T\} \r).\\
       f_{a}^{-}(y) &= \sup\l( \{s\in (0,T);\ \tau(\mu_{a}(s),y)>0\}\cup\{0\} \r).
  \end{align*}
Combining \cite[Lemma 2.3]{KLU} and the fact \eqref{eqn: causal relation}, the following result holds.
\begin{lemma}\label{lemma: earliest observation time}
  Let $y\in \D$. Then there exists $a_1,a_2\in B(0,d)$ such that $f_{a_1}^{-}(y),f_{a_2}^{+}(y) \in (0,T)$ and
  $$
     \tau(\mu_{a_1}(f_{a_1}^{-}(y)),y) = 0,   \quad\text{and}\quad    \tau(y,\mu_{a_2}(f_{a_2}^{+}(y))) = 0
  $$
  which implies that there exist past-pointing lightlike geodesic $\gamma_{-}(s)$ and future-pointing lightlike geodesic $\gamma_{+}(s)$ with $\gamma_{\pm}(0) = y$, such that 
  $$
   \mu_{a_1}(f_{a_1}^{-}(y)) \in \gamma_{-}([0,\rho(y,\dot{\gamma}_{-}(0))]),\quad\text{and}\quad
  \mu_{a_2}(f_{a_2}^{+}(y)) \in \gamma_{+}([0,\rho(y,\dot{\gamma}_{+}(0))])
  $$
\end{lemma}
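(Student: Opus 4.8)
The plan is to import the Minkowski-type argument of \cite[Lemma 2.3]{KLU} into our geometry and then add the refinement about cut points, which is the genuinely new content. Fix $y=(t_y,x_y)\in\D$. Since $\D\subseteq J^+(\rotom)\cap J^-(\rotom)$ and the slices $\{t\}\times N$ are spacelike Cauchy hypersurfaces, the time coordinate strictly increases along non-constant future-directed causal curves, so $0<t_y<T$. Using \eqref{eqn: causal relation}, choose $a_1,a_2\in B(0,d)$ with $y\in J^+(\mu_{a_1}((0,T)))\cap J^-(\mu_{a_2}((0,T)))$. There is then $s_0\in(0,T)$ with $\mu_{a_1}(s_0)\le y$, and since the world line $\mu_{a_1}$ is timelike, $\mu_{a_1}(s)\ll\mu_{a_1}(s_0)\le y$ for all $s\in(0,s_0)$; hence $f_{a_1}^-(y)\ge s_0>0$. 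Conversely $\mu_{a_1}(s)\ll y$ forces $s<t_y$, so $f_{a_1}^-(y)\le t_y<T$. This gives $f_{a_1}^-(y)\in(0,T)$, and $f_{a_2}^+(y)\in(0,T)$ follows symmetrically from $y\in J^-(\mu_{a_2}((0,T)))$.

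Next I would pin down the two time separations. Write $s^-:=f_{a_1}^-(y)$ and $s^+:=f_{a_2}^+(y)$. By definition of $s^-$ as a supremum, $\tau(\mu_{a_1}(s),y)=0$ for every $s\in(s^-,T)$; letting $s\downarrow s^-$ and using the continuity of $\tau$ (\cite[Lemma 14.21]{O}) gives $\tau(\mu_{a_1}(s^-),y)=0$, and symmetrically $\tau(y,\mu_{a_2}(s^+))=0$. Choosing $s_n\uparrow s^-$ with $\mu_{a_1}(s_n)\ll y$ and using that the relation $\le$ is closed on a globally hyperbolic manifold, one also obtains $\mu_{a_1}(s^-)\le y$, and likewise $y\le\mu_{a_2}(s^+)$.

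For the geometric part I would argue as follows. Assume $\mu_{a_1}(s^-)\neq y$ (if they coincide take $\gamma_-$ trivial, and the inclusion holds trivially). Then $\mu_{a_1}(s^-)<y$ with $\tau(\mu_{a_1}(s^-),y)=0$, so \cite[Lemma 14.19]{O} provides a future-directed causal geodesic from $\mu_{a_1}(s^-)$ to $y$ of length $\tau(\mu_{a_1}(s^-),y)=0$; a non-constant causal geodesic of zero length is lightlike. Let $\gamma_-$ be its orientation-reversal, a past-directed null geodesic with $\gamma_-(0)=y$ and $\gamma_-(\ell)=\mu_{a_1}(s^-)$ for some $\ell>0$; by uniqueness of geodesics it equals $\gamma_{y,\dot\gamma_-(0)}$, and $\mathcal T(y,\dot\gamma_-(0))>\ell$. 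For $s\in[0,\ell]$ the point $\gamma_-(s)$ satisfies $\mu_{a_1}(s^-)\le\gamma_-(s)\le y$; if $\tau(\gamma_-(s),y)>0$ for some such $s$ — necessarily $s>0$, since $\tau(y,y)=0$ — then $\mu_{a_1}(s^-)\le\gamma_-(s)\ll y$ would give $\mu_{a_1}(s^-)\ll y$, contradicting $\tau(\mu_{a_1}(s^-),y)=0$. Hence $\tau(\gamma_-(s),y)=0$ for all $s\in[0,\ell]$, and by the characterization of the null cut function extended to $L^-M$ through past time separation (see the remark after Lemma \ref{lemma: symmetry of the null cut function} and \cite[Section 6.2]{FLO}) this forces $\ell\le\rho(y,\dot\gamma_-(0))$, i.e. $\mu_{a_1}(s^-)\in\gamma_-([0,\rho(y,\dot\gamma_-(0))])$. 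The future-directed geodesic $\gamma_+$ is produced in exactly the same way from $\tau(y,\mu_{a_2}(s^+))=0$, now using the original null cut function on $L^+M$.

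The step I expect to be the main obstacle is this last one: in a general globally hyperbolic manifold the connecting null geodesic could \emph{a priori} run past its first cut point, and this must be excluded — unlike in the Minkowski setting of \cite{KLU}, where cut points do not occur. The mechanism that rules it out is precisely that $f_{a_1}^-(y)$ is, by construction, the \emph{latest} parameter at which $\mu_{a_1}$ lies chronologically to the past of $y$; this is what forces $\tau(\cdot\,,y)$ to vanish all along the connecting geodesic, which is exactly the condition confining it to the pre-cut-point segment. Everything else reduces to standard point-set causality on globally hyperbolic spacetimes (openness of $\ll$, closedness of $\le$, continuity of $\tau$, and the existence of length-maximizing causal geodesics).
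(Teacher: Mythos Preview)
Your argument is correct. The paper does not spell out a proof of this lemma; it simply says the result follows by combining \cite[Lemma 2.3]{KLU} with the causal relation \eqref{eqn: causal relation}, and your write-up is exactly a careful unpacking of that citation: existence of $a_1,a_2$ from \eqref{eqn: causal relation}, the interval bounds on $f_{a_i}^\pm(y)$ from the timelikeness of $\mu_{a_i}$ and the monotonicity of the time function, the vanishing of $\tau$ at the extremal parameter by continuity of $\tau$ and closedness of $\le$, the maximizing null geodesic from \cite[Lemma 14.19]{O}, and then the cut-point bound from $\tau\equiv 0$ along the segment.

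One small correction to your framing, not to your mathematics: \cite{KLU} is \emph{not} set in Minkowski space but on general globally hyperbolic Lorentzian manifolds, and the cut-point conclusion you describe as ``genuinely new content'' is already part of that same circle of ideas --- once $\tau(\mu_{a_1}(s^-),y)=0$ and the connecting segment is a maximizing null geodesic, the reverse triangle inequality (equivalently, your $J^+\cdot I^+\subseteq I^+$ step) forces $\tau(\gamma_-(s),y)=0$ all along it, and the inequality $\ell\le\rho(y,\dot\gamma_-(0))$ then drops out of the definition of the null cut function. So the obstacle you anticipated is real but is handled by the same mechanism as in \cite{KLU}, and your proof resolves it in the expected way.
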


Moreover, define $\E(y)$ as the the set of pairs of past-pointing and future-pointing lightlike vectors at $y$ that are not colinear, that is,
$$
 \mathcal{E}(y) = \{(v,w)\in L^{-}_{y}M\times L^{+}_{y}M: v\neq \lambda w,\text{ for all }\lambda\in\R \}.
$$

For all $(v,w) \in \mathcal{E}(y)$, the geodesic segments $\gamma_{y,v}([0,\rho(y,v)))$ and $\gamma_{y,w}([0,\rho(y,w)))$ intersect only at the point $y$, since there is no closed causal curve on a globally hyperbolic Lorentzian manifold, see \cite{Bernal-Sanchez-2007}. 

\begin{lemma}\label{lemma: observation points}
  For all $y\in \D\b\rotom$, there exist $x,z\in \rotom$ and $(v,w)\in \E(y)$  such that
  $$
    x \in \gamma_{y,v}((0,\rho(y,v)))\quad\text{and}\quad z\in \gamma_{y,w}((0,\rho(y,w))).
  $$
\end{lemma}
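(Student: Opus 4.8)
The plan is to extract the two null half-rays directly from Lemma~\ref{lemma: earliest observation time} and then correct two harmless defects using the openness of $\rotom$ and a small perturbation of the null directions at $y$.

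Fix $y\in\D\b\rotom$ and apply Lemma~\ref{lemma: earliest observation time}. It produces $a_1,a_2\in B(0,d)$ with $s_1:=f_{a_1}^{-}(y)\in(0,T)$ and $s_2:=f_{a_2}^{+}(y)\in(0,T)$, a past-pointing null geodesic $\gamma_-$ and a future-pointing null geodesic $\gamma_+$ with $\gamma_\pm(0)=y$, such that $\mu_{a_1}(s_1)\in\gamma_-([0,\rho_-])$ and $\mu_{a_2}(s_2)\in\gamma_+([0,\rho_+])$, where $\rho_\pm:=\rho(y,\dot\gamma_\pm(0))$. Set $v:=\dot\gamma_-(0)\in L^{-}_{y}M$ and $w:=\dot\gamma_+(0)\in L^{+}_{y}M$, so that $\gamma_-=\gamma_{y,v}$ and $\gamma_+=\gamma_{y,w}$. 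Since $s_1,s_2\in(0,T)$, the points $x:=\mu_{a_1}(s_1)=(s_1,a_1)$ and $z:=\mu_{a_2}(s_2)=(s_2,a_2)$ lie in $\rotom$; and because $y\notin\rotom$ they are distinct from $y=\gamma_\pm(0)$, so $x=\gamma_{y,v}(\ell)$ and $z=\gamma_{y,w}(r)$ for some $\ell\in(0,\rho_-]$ and $r\in(0,\rho_+]$ (the strict positivity uses $x,z\neq y$). Thus the only remaining points are to pass from $\ell\le\rho_-$, $r\le\rho_+$ to strict inequalities, and to arrange $(v,w)\in\E(y)$.

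For the strict inequalities: if $\ell=\rho_-$, then since $\gamma_{y,v}(\ell)=x\in\rotom$ and $\rotom$ is open, $\gamma_{y,v}(s)\in\rotom$ for every $s$ in a neighbourhood of $\ell$, so I replace $x$ by $\gamma_{y,v}(\ell-\delta)$ for a sufficiently small $\delta\in(0,\ell)$; the new parameter then lies in $(0,\rho_-)=(0,\rho(y,v))$. I do the same for $z$ if $r=\rho_+$. After this step $x\in\rotom\cap\gamma_{y,v}((0,\rho(y,v)))$ and $z\in\rotom\cap\gamma_{y,w}((0,\rho(y,w)))$.

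It remains to ensure $(v,w)\in\E(y)$, i.e.\ that $v$ and $w$ are not colinear. If they already fail to be colinear, nothing more is needed. Otherwise I perturb $w$: since $r<\rho(y,w)\le\mathcal{T}(y,w)$, the geodesic $\gamma_{y,w'}$ is defined on $[0,r]$ for all $w'\in L^{+}_{y}M$ sufficiently close to $w$, and by the smooth dependence of geodesics on their initial data, the openness of $\rotom$, and the lower semicontinuity of $w'\mapsto\rho(y,w')$ (\cite[Theorem 9.33]{Beem-Ehrlich-Easley}), there is a neighbourhood $\mathcal{O}\subseteq L^{+}_{y}M$ of $w$ with $\gamma_{y,w'}(r)\in\rotom$ and $r<\rho(y,w')$ for all $w'\in\mathcal{O}$. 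Because $n\ge2$, the null cone $L^{+}_{y}M$ has dimension $n\ge2$, hence $\mathcal{O}$ is not contained in the line $\R v$; choosing $w'\in\mathcal{O}\setminus\R v$ and replacing $w$ by $w'$ and $z$ by $\gamma_{y,w'}(r)$ gives $(v,w)\in\E(y)$ while keeping the previous conclusions. I expect this final step to be the only genuine subtlety — it is where the dimension hypothesis $n\ge2$ and the lower semicontinuity of the null cut function are both needed — whereas the reduction to Lemma~\ref{lemma: earliest observation time} and the backing-up argument are routine.
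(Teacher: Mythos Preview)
Your proof is correct and follows essentially the same approach as the paper's: extract the two null rays from Lemma~\ref{lemma: earliest observation time}, back up slightly using the openness of $\rotom$ to get parameters strictly before the cut time, and then perturb one of the null directions (you perturb $w$, the paper perturbs $v$) using the lower semi-continuity of the null cut function to ensure non-colinearity. The only differences are cosmetic---which direction is perturbed and the order of the two corrections---and neither affects the argument.
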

\begin{proof}
   By Lemma \ref{lemma: earliest observation time}, there exists $a_1,a_2\in B(0,d)$ such that
   $$
  \tau(\mu_{a_1}(f_{a_1}^{-}(y)),y) = 0\quad\text{and}\quad   \tau(y,\mu_{a_2}(f_{a_2}^{+}(y))) = 0.
   $$
   Set $\tilde{x} := \mu_{a_1}(f_{a_1}^{-}(y))$ and $\tilde{z} := \mu_{a_2}(f_{a_2}^{+}(y))$. Since $0<f_{a_1}^{-}(y), f_{a_1}^{+}(y)<T$, we have $\tilde{x},\tilde{z}\in \rotom$.
   
 By Lemma \ref{lemma: earliest observation time}, there exist lightlike geodesics $\gamma_{y,\tilde{v}}$ and $\gamma_{y,\tilde{w}}$ such that
   $$
    \gamma_{y,\tilde{v}}(\tilde{s}_{1}) = \tilde{x},\quad\gamma_{y,\tilde{w}}(\tilde{s}_2) = \tilde{z},\quad\text{for some}\quad \tilde{s}_1 \in (0,\rho(y,\tilde{v})], \tilde{s}_{2}\in (0,\rho(y,\tilde{w})].
   $$
     Since $\rotom$ is open, there exists $\e >0$ such that
  $$
  \gamma_{y,\tilde{v}}(\tilde{s}_1 -2\e),\gamma_{y,\tilde{w}}(\tilde{s}_2 - 2\e):= z \in\rotom.
  $$

   It may happen that $\gamma_{y,\tilde{v}}$ and $\gamma_{y,\tilde{w}}$ are the same geodesic after reparametrization. In this case, the vectors $\tilde{v}$ and $\tilde{w}$ are linearly dependent.  Choose a sequence $v_{j}\in L^{-}_{y}M$ such that $v_j \to \tilde{v}$ and $v_j\neq \tilde{v}$ for all $j$. Since the null cut function $\rho$ is lower semi-continuous, we have $\rho(y,v_j)\ge \rho(y,\tilde{v})-\e$ for large enough $j$. Then, $\tilde{s}_1-2\e \in (0,\rho(y,v_j)-\e]$, and the points
   $$
   \tilde{x}_{j}:= \gamma_{y,\tilde{v}_j}(\tilde{s}_1-2\e) = \exp_{y}((\tilde{s}_1-2\e)\tilde{v}_j)
   $$ are well-defined, where $\exp_y$ denotes the exponential map on $(M,g)$. Since $\exp_y$ is continuous, we have $\tilde{x}_j\to \tilde{x}$ as $j\to \infty$.

   We now choose $x :=\tilde{x}_j$ for some sufficiently large $j$ such that $x\in \rotom$ and set $v := \tilde{v}_j$. We end the proof by letting $w := \tilde{w}$ and $z$. 
\end{proof}

First, we define the set of \textit{admissible} broken light-like geodesic segments associated to $(v,w)\in\E(y)$ as 
\begin{equation}\label{eqn: admissible broken light-like geodesic segments}
\mathcal{A}(y,v,w) =\left\{
    (x,z) \in \rotom \times \rotom 
    \;\middle|\;
    \begin{aligned}
        &x = \gamma_{y,v}(s'), \ z = \gamma_{y,w}(s'') \text{ for some }\\
        &  s' \in (0,\rho(y,v)), s'' \in (0,\rho(y,w)).
    \end{aligned}
\right\}.
\end{equation}
Then, we define the broken light-ray transform as follows. 
\begin{definition}\label{def: broken light ray transform}
   Suppose $y\in \D$, $(v,w)\in \mathcal{E}(y)$, and $(x,z)\in \mathcal{A}(y,v,w)$. Define $s^{\prime}$ and $s^{\prime\prime}$ by $x = \gamma_{y,v}(s^{\prime})$ and $z = \gamma_{y,w}(s^{\prime\prime})$.
  Let $\xi = -\dot{\gamma}_{y,v}(s^{\prime})\in L_{x}^{+}M$. The broken light-ray transform associated with the connection $A$, along the geodesic segments $\gamma_{x,\xi}([0,s^{\prime}])$ and $\gamma_{y,w}([0,s^{\prime\prime}])$, is defined by
   $$
    S^{A}_{\gamma_{x,\xi}([0,s^{\prime}])\gamma_{y,w}([0,s^{\prime\prime}])} := \mathbf{P}^{A}_{\gamma_{y,w}([0,s^{\prime}])} \circ\mathbf{P}^{A}_{{\gamma}_{x,\xi}([0,s^{\prime\prime}])}.
   $$
\end{definition}

\subsection{Recovery of the connection}
In this section, we prove that the connection can be determined up to a natural gauge transformation given the broken light-ray transform. The argument is inspired by the proofs of \cite[Theorem 5]{CLOP} and \cite[Theorem 4.2]{St-Amant-2024}.

\begin{theorem}\label{thm: inversion of the light-ray transform}
  Let $A$ and $B$ be two Hermitian connections. Suppose   $$
  S^{A}_{\gamma_{x,\xi}([0,s^{\prime}])\gamma_{y,w}([0,s^{\prime\prime}])} = S^{B}_{\gamma_{x,\xi}([0,s^{\prime}])\gamma_{y,w}([0,s^{\prime\prime}])}
  $$ for all $y\in \D$ and for all lightlike geodesic segments $\gamma_{x,\xi}([0,s^{\prime}])$ and $\gamma_{y,w}([0,s^{\prime\prime}])$, and $A=B$ on $\rotom$. Then there exists a gauge transformation $\boldsymbol{\varphi}$ satisfying \eqref{eqn: natural gauge} such that
   $$
  A = B \triangleleft \boldsymbol{\varphi}.
   $$

\end{theorem}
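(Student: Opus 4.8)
The plan is to define the gauge transformation $\boldsymbol{\varphi}$ pointwise on $\D$ using parallel transport from $\rotom$, and then to verify that it satisfies the required transformation law by exploiting the freedom in choosing the light-ray segments. Fix $y\in\D\setminus\rotom$. By Lemma \ref{lemma: observation points}, there exist $x,z\in\rotom$ and $(v,w)\in\E(y)$ with $x=\gamma_{y,v}(s')$ and $z=\gamma_{y,w}(s'')$ for some $s'\in(0,\rho(y,v))$, $s''\in(0,\rho(y,w))$. Setting $\xi=-\dot\gamma_{y,v}(s')\in L^+_xM$, the geodesic $\gamma_{x,\xi}$ reaches $y$ at parameter $s'$, so $\mathbf{P}^A_{\gamma_{x,\xi}([0,s'])}$ is the parallel transport from $x$ to $y$ along the unique causal curve between them (Lemma \ref{lemma: only causal curve}). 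I would define
$$
  \boldsymbol{\varphi}(y) = \big(\mathbf{P}^A_{\gamma_{x,\xi}([0,s'])}\big)^{-1}\,\mathbf{P}^B_{\gamma_{x,\xi}([0,s'])},
$$
i.e.\ transport by $B$ out to $\rotom$ and back by $A^{-1}$, and on $\rotom$ set $\boldsymbol{\varphi}=\id$ (consistent, since $A=B$ there). The first task is \emph{well-definedness}: this expression must not depend on the choice of $(x,\xi,s')$ realizing $y$. Here the hypothesis $S^A=S^B$ enters. If $(x_1,\xi_1,s_1')$ and $(x_2,\xi_2,s_2')$ both give admissible incoming segments to $y$, I pick any outgoing segment $\gamma_{y,w}([0,s''])$ landing at some $z\in\rotom$ with $(x_i,z)\in\A(y,v_i,w)$; the equality of broken transforms gives $\mathbf{P}^A_{\gamma_{y,w}}\mathbf{P}^A_{\gamma_{x_i,\xi_i}} = \mathbf{P}^B_{\gamma_{y,w}}\mathbf{P}^B_{\gamma_{x_i,\xi_i}}$ for $i=1,2$, and dividing the two identities eliminates the $w$-leg and shows $(\mathbf{P}^A_{\gamma_{x_1,\xi_1}})^{-1}\mathbf{P}^B_{\gamma_{x_1,\xi_1}} = (\mathbf{P}^A_{\gamma_{x_2,\xi_2}})^{-1}\mathbf{P}^B_{\gamma_{x_2,\xi_2}}$, as needed. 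One must check the incoming and outgoing directions can always be chosen non-colinear and landing inside $\rotom$, which is exactly the content of Lemma \ref{lemma: observation points} together with the openness of $\rotom$.

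Next I would establish \emph{smoothness} of $\boldsymbol{\varphi}$. Locally near a fixed $y_0\in\D\setminus\rotom$, one fixes a smoothly varying family of incoming null geodesics $\gamma_{x,\xi}$ with endpoints $y$ ranging over a neighborhood of $y_0$; since $x$, $\xi$, and $s'$ depend smoothly on $y$ (the null exponential map and its inverse are smooth away from cut points, and we are strictly before the cut point), and the parallel transport maps $\mathbf{P}^A$, $\mathbf{P}^B$ depend smoothly on the geodesic data by smooth dependence of ODE solutions \eqref{eqn: equation of parallel transport}, the map $\boldsymbol{\varphi}$ is smooth there. Near points of $\rotom$ one uses that $A=B$ forces $\boldsymbol{\varphi}\equiv\id$ on a neighborhood, giving smoothness and the boundary condition $\boldsymbol{\varphi}|_{\rotom}=\id$. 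That $\boldsymbol{\varphi}(y)\in U(n)$ is immediate, since each $\mathbf{P}^A$, $\mathbf{P}^B$ is unitary by \eqref{eqn: equation of parallel transport}.

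Finally, the \emph{transformation law} $A = B\triangleleft\boldsymbol{\varphi}$, equivalently $\boldsymbol{\varphi}^{-1}d\boldsymbol{\varphi} + \boldsymbol{\varphi}^{-1}A\boldsymbol{\varphi} = B$, is verified by differentiating along null directions and then using that null covectors span. Fix $y_0$ and a null direction $\eta$; choose a null geodesic through $y_0$ in direction $\eta$ that is still an admissible incoming segment from some $x\in\rotom$, parametrized so that $\gamma(0)=x$, $\gamma(s')=y_0$, and let $y(\sigma)=\gamma(s'+\sigma)$ be the continuation. Writing $U^A(\sigma)=\mathbf{P}^A_{\gamma([0,s'+\sigma])}$ and similarly $U^B$, we have $\boldsymbol{\varphi}(y(\sigma)) = U^A(\sigma)^{-1}U^B(\sigma)$. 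Differentiating at $\sigma=0$ and using the transport equations \eqref{eqn: equation of parallel transport}, \eqref{eqn: parallel transport is invertible} for the $A$- and $B$-connections along $\gamma$, the $\sigma$-derivative of $\boldsymbol{\varphi}\circ\gamma$ equals $U^A(0)^{-1}\big(\langle A,\dot\gamma\rangle - \langle B,\dot\gamma\rangle\big)U^B(0)$ evaluated appropriately, which after conjugating by $\boldsymbol{\varphi}^{-1}=U^B{}^{-1}U^A$ rearranges precisely into the directional version $\langle \boldsymbol{\varphi}^{-1}d\boldsymbol{\varphi} + \boldsymbol{\varphi}^{-1}A\boldsymbol{\varphi} - B,\,\dot\gamma\rangle = 0$ at $y_0$. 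Since through each point of $\D\setminus\rotom$ one can realize a full open cone of admissible incoming null directions (again by Lemma \ref{lemma: observation points} and lower semicontinuity of $\rho$, perturbing $v$ as in its proof), and null covectors at a point span $T^*_{y_0}M$, the $1$-form identity holds at $y_0$, hence on all of $\D$.

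\textbf{Main obstacle.} I expect the crux to be the well-definedness step: one must show that \emph{every} pair of admissible incoming and outgoing segments at $y$ can be simultaneously arranged to be non-colinear with endpoints genuinely inside the open set $\rotom$ (not merely on its closure or at a cut point), so that Definition \ref{def: broken light ray transform} applies and the broken-transform hypothesis can be invoked to cancel the outgoing leg. The perturbation argument in the proof of Lemma \ref{lemma: observation points}---pushing the null vector slightly off the degenerate direction while staying before the cut point via lower semicontinuity of $\rho$---is the tool for this, but it must be applied with care to guarantee that the resulting $\boldsymbol{\varphi}(y)$ is independent of all these auxiliary choices, including the choice of the outgoing leg used only as a cancellation device. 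A secondary subtlety is smoothness across the interface where $\D\setminus\rotom$ meets $\rotom$, handled by the a priori knowledge $A=B$ on $\rotom$.
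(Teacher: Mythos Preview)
Your overall strategy matches the paper's: define $\boldsymbol{\varphi}$ by comparing parallel transports out to $\rotom$, prove it is independent of the chosen leg via the broken-transform hypothesis, then differentiate along null geodesics to get the gauge relation. However, your specific definition has the matrix factors in the wrong order, and in the non-abelian setting this is not cosmetic: it breaks both the well-definedness step and the differentiation step.

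Concretely, from $\mathbf{P}^{A}_{\mathrm{out}}\mathbf{P}^{A}_{\mathrm{in},i}=\mathbf{P}^{B}_{\mathrm{out}}\mathbf{P}^{B}_{\mathrm{in},i}$ for $i=1,2$, left-dividing the first identity by the second eliminates the outgoing leg and gives
\[
(\mathbf{P}^{A}_{\mathrm{in},2})^{-1}\mathbf{P}^{A}_{\mathrm{in},1}=(\mathbf{P}^{B}_{\mathrm{in},2})^{-1}\mathbf{P}^{B}_{\mathrm{in},1},
\quad\text{i.e.}\quad
\mathbf{P}^{B}_{\mathrm{in},1}(\mathbf{P}^{A}_{\mathrm{in},1})^{-1}=\mathbf{P}^{B}_{\mathrm{in},2}(\mathbf{P}^{A}_{\mathrm{in},2})^{-1},
\]
which is \emph{not} your claimed identity $(\mathbf{P}^{A}_{\mathrm{in},1})^{-1}\mathbf{P}^{B}_{\mathrm{in},1}=(\mathbf{P}^{A}_{\mathrm{in},2})^{-1}\mathbf{P}^{B}_{\mathrm{in},2}$. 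In fact your quantity equals $(\mathbf{P}^{A}_{\mathrm{in}})^{-1}\boldsymbol{\varphi}\,\mathbf{P}^{A}_{\mathrm{in}}$ with $\boldsymbol{\varphi}=\mathbf{P}^{B}_{\mathrm{in}}(\mathbf{P}^{A}_{\mathrm{in}})^{-1}$, so it depends on the leg unless $\boldsymbol{\varphi}$ is central. Likewise, with $\psi=(U^{A})^{-1}U^{B}$ your derivative $\partial_\sigma\psi=(U^{A})^{-1}(\langle A,\dot\gamma\rangle-\langle B,\dot\gamma\rangle)U^{B}$ does \emph{not} rearrange to $\psi\langle A,\dot\gamma\rangle-\langle B,\dot\gamma\rangle\psi$; that would require $U^{B}$ to commute with $\langle A,\dot\gamma\rangle$.

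The fix is to set $\boldsymbol{\varphi}(y)=\mathbf{P}^{B}_{\gamma_{x,\xi}([0,s'])}\bigl(\mathbf{P}^{A}_{\gamma_{x,\xi}([0,s'])}\bigr)^{-1}$. Then the cancellation argument above gives well-definedness, and the same computation yields $\partial_\sigma\boldsymbol{\varphi}=-\langle B,\dot\gamma\rangle\boldsymbol{\varphi}+\boldsymbol{\varphi}\langle A,\dot\gamma\rangle$, hence $d\boldsymbol{\varphi}=\boldsymbol{\varphi} A-B\boldsymbol{\varphi}$ as required. This corrected $\boldsymbol{\varphi}$ coincides, via the rearranged identity \eqref{eqn: identity on gauge fixing}, with the paper's definition $(\mathbf{P}^{B}_{\gamma_{y,w}([0,s''])})^{-1}\mathbf{P}^{A}_{\gamma_{y,w}([0,s''])}$ using the outgoing leg. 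Once corrected, your endpoint-variation argument is a legitimate (and slightly more direct) alternative to the paper's two-parameter computation that varies the starting point of the transport while keeping the endpoint in $\rotom$ fixed.
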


The proof is based on a gauge fixing argument as inspired by \cite[Lemma 3]{CLOP}. Given two Hermitian connections $A$ and $B$, define a $U(n)$ valued function by
$$
  \boldsymbol{\varphi}(y;w,s^{\prime\prime}) = \l(\mathbf{P}^{B}_{\gamma_{y,w}([0,s^{\prime\prime}])} \r)^{-1}\mathbf{P}^{A}_{\gamma_{y,w}([0,s^{\prime\prime}])}, 
$$
where $w\in L^{+}_{y}M$ and $s^{\prime\prime}\in (0,\rho(y,w))$ is a parameter of the lightlike geodesic such that $\gamma_{y,w}(s^{\prime\prime})\in \rotom$.
\begin{lemma}\label{lemma: gauge fixing}
Suppose that for $j=1,2$, we are given vectors $w_j\in L_{y}^{-}M$ and parameters $s^{\prime\prime}_j\in (0,\rho(y,w_j))$ such that $\gamma_{y,w_j}(s^{\prime\prime}_j)\in \rotom$. Under the assumption of Theorem \ref{thm: inversion of the light-ray transform}, we have
$$
   \boldsymbol{\varphi}(y;w_1,s^{\prime\prime}_1) = \boldsymbol{\varphi}(y;w_2,s^{\prime\prime}_2).
$$  
\end{lemma}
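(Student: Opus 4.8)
The plan is to reduce the lemma to a one-line algebraic manipulation of the parallel transport maps and then to a short genericity (perturbation) argument; I expect only the latter to need any care.

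First I would unwind the broken light-ray transform. Fix $y\in\D$ and an admissible configuration $(v,w,s',s'')$ --- meaning $(v,w)\in\E(y)$, $s'\in(0,\rho(y,v))$, $s''\in(0,\rho(y,w))$, and $x:=\gamma_{y,v}(s')\in\rotom$, $z:=\gamma_{y,w}(s'')\in\rotom$, so that $(x,z)\in\mathcal{A}(y,v,w)$. Putting $\xi=-\dot\gamma_{y,v}(s')$, uniqueness of geodesics gives $\gamma_{x,\xi}(t)=\gamma_{y,v}(s'-t)$, so the substitution $t\mapsto s'-t$ in \eqref{eqn: equation of parallel transport} (compare \eqref{eqn: minus parameter parallel transport} and \eqref{eqn: parallel transport is invertible}) yields $\mathbf{P}^{A}_{\gamma_{x,\xi}([0,s'])}=\bigl(\mathbf{P}^{A}_{\gamma_{y,v}([0,s'])}\bigr)^{-1}$, and similarly for $B$. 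Inserting this, together with the formula of Definition \ref{def: broken light ray transform} (the broken transport $x\to y\to z$), into the hypothesis of Theorem \ref{thm: inversion of the light-ray transform}, the identity $S^{A}=S^{B}$ reads
\[
\mathbf{P}^{A}_{\gamma_{y,w}([0,s''])}\bigl(\mathbf{P}^{A}_{\gamma_{y,v}([0,s'])}\bigr)^{-1}=\mathbf{P}^{B}_{\gamma_{y,w}([0,s''])}\bigl(\mathbf{P}^{B}_{\gamma_{y,v}([0,s'])}\bigr)^{-1},
\]
which --- left-multiplying by $\bigl(\mathbf{P}^{B}_{\gamma_{y,w}([0,s''])}\bigr)^{-1}$ and right-multiplying by $\mathbf{P}^{B}_{\gamma_{y,v}([0,s'])}$ --- is equivalent to
\[
\bigl(\mathbf{P}^{B}_{\gamma_{y,w}([0,s''])}\bigr)^{-1}\mathbf{P}^{A}_{\gamma_{y,w}([0,s''])}=\bigl(\mathbf{P}^{B}_{\gamma_{y,v}([0,s'])}\bigr)^{-1}\mathbf{P}^{A}_{\gamma_{y,v}([0,s'])},
\]
i.e. $\boldsymbol{\varphi}(y;w,s'')=\boldsymbol{\varphi}(y;v,s')$, where I extend the definition of $\boldsymbol{\varphi}$ by the same formula to permit the second slot to be a lightlike vector of either time orientation. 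In words: for a fixed $y$, the value of $\boldsymbol{\varphi}(y;\cdot,\cdot)$ along any light ray issuing from $y$ that meets $\rotom$ before its first cut point equals its value along any other such ray forming with it a pair in $\E(y)$.

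Granting this, the lemma follows once I exhibit, for the prescribed data $(w_1,s''_1)$ and $(w_2,s''_2)$, one lightlike vector $v$ at $y$ --- of time orientation opposite to $w_1,w_2$ --- and one parameter $s'\in(0,\rho(y,v))$ with $\gamma_{y,v}(s')\in\rotom$ and $(v,w_1),(v,w_2)\in\E(y)$; applying the identity of the previous paragraph twice then gives $\boldsymbol{\varphi}(y;w_1,s''_1)=\boldsymbol{\varphi}(y;v,s')=\boldsymbol{\varphi}(y;w_2,s''_2)$, which is the claim. That \emph{some} $v$, $s'$ with $\gamma_{y,v}(s')\in\rotom$ and $s'\in(0,\rho(y,v))$ exist is Lemma \ref{lemma: observation points} (used in the appropriate time direction) when $y\in\D\setminus\rotom$, and is immediate when $y\in\rotom$, since a sufficiently short lightlike geodesic from $y$ stays in the open set $\rotom$ and $\rho(y,v)>0$. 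The only condition that is not automatic is the transversality $(v,w_1),(v,w_2)\in\E(y)$, i.e. that $v$ be parallel to neither $w_1$ nor $w_2$; this I regard as the main (if mild) obstacle.

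To secure it I would perturb $v$, exactly as in the proof of Lemma \ref{lemma: observation points}. Starting from an admissible $v_0$ as above, consider lightlike vectors $v$ near $v_0$ in the lightlike cone at $y$: lower semi-continuity of the null cut function $\rho(y,\cdot)$ \cite[Theorem 9.33]{Beem-Ehrlich-Easley} keeps $s'<\rho(y,v)$ for $v$ close to $v_0$, while continuity of $\exp_y$ and openness of $\rotom$ keep $\gamma_{y,v}(s')=\exp_y(s'v)\in\rotom$. On the other hand, the lightlike vectors at $y$ parallel to $w_1$ or to $w_2$ form a union of two rays, which is nowhere dense in the lightlike cone, whereas the admissible bridging directions contain a nonempty open subset of that cone. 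Hence a small perturbation of $v_0$ stays admissible and becomes transverse to both $w_1$ and $w_2$, which furnishes the required $v$ and completes the argument. Beyond this, everything consists only of the displayed identity of the second paragraph and the reversal property of parallel transport, both of which are routine.
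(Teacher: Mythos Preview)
Your proposal is correct and follows essentially the same route as the paper: rewrite $S^{A}=S^{B}$ as $\boldsymbol{\varphi}(y;w,s'')=\boldsymbol{\varphi}(y;v,s')$ for any admissible pair $(v,w)\in\E(y)$, then bridge $w_1$ and $w_2$ through a common $v$ obtained by perturbing an initial direction (using lower semi-continuity of $\rho$ and openness of $\rotom$) so that $v$ is parallel to neither $w_1$ nor $w_2$. One minor slip: to pass from the displayed identity $\mathbf{P}^{A}_{w}(\mathbf{P}^{A}_{v})^{-1}=\mathbf{P}^{B}_{w}(\mathbf{P}^{B}_{v})^{-1}$ to $\boldsymbol{\varphi}(y;w,s'')=\boldsymbol{\varphi}(y;v,s')$ you should right-multiply by $\mathbf{P}^{A}_{\gamma_{y,v}([0,s'])}$, not $\mathbf{P}^{B}_{\gamma_{y,v}([0,s'])}$; the conclusion is unaffected.
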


\begin{proof}
The assumption that the broken light-ray transforms for $A$ and $B$ coincide implies that
$$
  \mathbf{P}^{A}_{\gamma_{y,w}([0,s^{\prime\prime}])}\mathbf{P}^{A}_{\gamma_{x,\xi}([0,s^{\prime}])} = \mathbf{P}^{B}_{\gamma_{y,w}([0,s^{\prime\prime}])}\mathbf{P}^{B}_{\gamma_{x,\xi}([0,s^{\prime}])},
$$
for all $(v,w)\in \E(y)$ such that $\gamma_{y,v}(s^{\prime}),\gamma_{y,w}(s^{\prime\prime})\in \rotom$, with $s^{\prime}\in (0,\rho(y,v)), s^{\prime\prime}\in (0,\rho(y,w))$, and 
$$
(x,\xi) = (\gamma_{y,v}(s^{\prime}),-\dot{\gamma}_{y,v}(s^{\prime\prime})).
$$
Recall that the inversion of the parallel transport map is obtained by solving the transport equation backwards. Then, the solution of the transport equation forms a group. Rearranging terms, we obtain the identity
\begin{equation}\label{eqn: identity on gauge fixing}
  \l(\mathbf{P}^{B}_{\gamma_{y,w}([0,s^{\prime\prime}])}\r)^{-1} \circ\mathbf{P}^{A}_{\gamma_{y,w}([0,s^{\prime\prime}])} = \mathbf{P}^{B}_{\gamma_{x,\xi}([0,s^{\prime}])}\circ \l(\mathbf{P}^{A}_{\gamma_{x,\xi}([0,s^{\prime}])}\r)^{-1}.
\end{equation}

\begin{claim} There exists $v\in L_{y}^{-}M$ such that $(v,w_j)\in \E(y)$ for $j=1,2$, and the geodesic segment $\gamma_{y,v}(0,\rho(y,v))$ intersects $\rotom$.
\end{claim}

By Lemma \ref{lemma: earliest observation time}, there exists a past-pointing lightlike geodesic $\gamma_{y,\tilde{v}}$ from $y$ to the point $\mu_{a}(f^{-}_{a}(y))$ for some $a\in B(0,d)$, such that
$$
   \mu_a(f_{a}^{-}(y)) \in \gamma([0,\rho(y,\tilde{v})]).
$$
By the lower semi-continuity of the cut-time function $\rho$, for any $\e>0$, there exists a neighborhood $V\subseteq L_{y}^{-}M\b0$ of $\tilde{v}$ such that 
$$
\rho(y,v) > \rho(y,\tilde{v})-\e,\quad\text{for all }v\in V.
$$ 
If $\tilde{v} = \lambda w_j$ for some $j=1,2$ and $\lambda\in \R\b0$, we choose $v\in V$ such that $v$ is not collinear with either $w_1$ or $w_2$. Then $(v,w_j)\in \E(y)$ for $j=1,2$. Moreover, since $\gamma_{y,\tilde{v}}([0,\rho(y,\tilde{v})])$ intersects $\rotom$, the openness of $\rotom$ implies that for sufficiently small $\e>0$, we have 
$$
\gamma_{y,v}(0,\rho(y,v)) \cap \rotom \neq \varnothing
$$ 
This proves the claim.

Then, there exists a parameter $s^{\prime}\in (0,\rho(y,v))$ such that $x := \gamma_{y,v}(s^{\prime})\in \rotom$. Let $\xi := -\dot{\gamma}_{y,v}(s^{\prime})$. By the identity \eqref{eqn: identity on gauge fixing}, we have 
$$
 \l(\mathbf{P}^{B}_{\gamma_{y,w_j}([0,s_{j}^{\prime\prime}])}\r)^{-1}\mathbf{P}^{A}_{\gamma_{y,w_j}([0,s_{j}^{\prime\prime}])} = \mathbf{P}^{B}_{\gamma_{x,\xi}([0,s^{\prime}])} \l(\mathbf{P}^{A}_{\gamma_{x,\xi}([0,s^{\prime}])}\r)^{-1},\quad \text{for }j=1,2.
$$
It follows that 
$$
\boldsymbol{\varphi}(y;w_1,s^{\prime\prime}_1) = \boldsymbol{\varphi}(y;w_2,s^{\prime\prime}_2).
$$
 
\end{proof}

The gauge $\boldsymbol{\varphi}$ defined in Lemma \ref{lemma: gauge fixing} depends only on the point $y$, i.e., it defines a function $\boldsymbol{\varphi}(y)$. Note that the geodesic $\gamma_{y,w}$ depends smoothly on the initial data $(y,w)$. Thus, the transport equations \eqref{eqn: equation of parallel transport} and \eqref{eqn: parallel transport is invertible} depend smoothly on $(y,w)$, which implies that
$$
\boldsymbol{\varphi} \in C^{\infty}(\D;U(n)).
$$
Moreover, it is clear that $\boldsymbol{\varphi}|_{\rotom} = \id$, since $A=B$ on $\rotom$.

\begin{proof}[Proof of Theorem \ref{thm: inversion of the light-ray transform}]
Let $\phi_{t}(y,w)$ denote the geodesic flow starting from $(y,w)$. Then, we have
$$
  \boldsymbol{\varphi}(\gamma_{y,w}(t)) = \l(\mathbf{P}^{B}_{\gamma_{\phi_{t}(y,w)}([0,s^{\prime\prime}])} \r)^{-1}\mathbf{P}^{A}_{\gamma_{\phi_{t}(y,w)}([0,s^{\prime\prime}])}.
$$
For convenience, write 
$$
\gamma(s) := \gamma_{y,w}(s),\quad\text{and}\quad \gamma_{\phi_{t}}(s) := \gamma_{\phi_{t}(y,w)}(s) = \gamma_{y,w}(s+t).
$$ 
We now consider the derivative of $\boldsymbol{\varphi}(\gamma(t))$ with respect to $t$.

By the lower semi-continuity of the null cut function $\rho$, we have that for all $\e>0$, there exists $\delta>0$ such that
\begin{equation}\label{eqn: lower semi-continuity with geodesic flow}
  \rho(\phi_{t}(y,w))\ge \rho(y,w)-\e,\quad \text{for all } t\in(-\delta,\delta).
\end{equation}
We may assume satisfies $\delta<\e$ without loss of generality. Fix such $\e>0$ and corresponding $\delta>0$. Define two parameters
\begin{equation}\label{eqn: domain of two parameters}
  t\in (-\delta/2,\delta/2),\quad s\in (-\delta/2, \rho(y,w)-2\e).
\end{equation}
For all $(t,s)$ in the domain defined by \eqref{eqn: domain of two parameters}, define 
$$
\tilde{U}^{A}(t,s):= \mathbf{P}^{A}_{\gamma_{\phi_t}([0,s])},
$$
which satisfies the transport equation
\begin{equation}
   \begin{aligned}
     \begin{cases}
        \partial_{s}\tilde{U}^{A}(t,s)+\langle A(\gamma(t+s)),\dot{\gamma}(t+s)\rangle \tilde{U}^{A}(t,s) = 0\\
        \tilde{U}^{A}(t,0) = I_n.
    \nonumber
     \end{cases}
    \end{aligned}
\end{equation}

We observe that 
$$
\tilde{U}^{A}(0,s) = \mathbf{P}^{A}_{\gamma ([0,s])}.
$$
Similarly, $\tilde{U}^{B}(t,s)$ is the solution to the corresponding transport equation for the connection $B$. By Lemma \ref{lemma: gauge fixing} and the definition of $\tilde{U}^{A}$ and $\tilde{U}^B$, we have
\begin{equation}\label{eqn: gauge under the geodesic flow}
  \boldsymbol{\varphi}(\gamma(t)) = \tilde{U}^{B}(t,s^{\prime\prime})^{-1}\tilde{U}^{A}(t,s^{\prime\prime}) = \tilde{U}^{B}(t,s^{\prime\prime}-t)\tilde{U}^{A}(t,s^{\prime\prime}-t),
\end{equation}
where the second equality follows Lemma \ref{lemma: gauge fixing}, since the endpoints of the geodesic segments lie in $\rotom$ and remain strictly before the first point for $t\in (-\delta/2,\delta/2)$.

We perform a change of variables by defining
$$
U^{A}(t,s) := \tilde{U}^{A}(t,s-t),
$$
which satisfies the transport equation
\begin{equation}\label{eqn: fundamental solution of the parallel transport}
  \begin{aligned}
    \begin{cases}
     \partial_{s}U^{A}(t,s) + \langle A(\gamma(s)),\dot{\gamma}(s)\rangle U^{A}(t,s) = 0,\\
     U^{A}(t,t)=I_n,
    \end{cases}
  \end{aligned}
\end{equation}
where, for fixed $t\in (-\delta/2,\delta/2)$, the parameter $s$ lies in the interval $(-\delta/2+t,\rho(y,w)-2\e +t)$. In fact, the domian of the equation \eqref{eqn: fundamental solution of the parallel transport} can be extended to
\begin{equation}\label{eqn: extended domain of two parameters}
  t\in (-\delta/2,\delta/2),\quad s\in (-\delta,\rho(y,w)-\e).
\end{equation}
Thus, the map $U^{A}(t,s)$ is a smooth two-parameter map in the Lie group $U(n)$, with $(t,s)$ in the domain given by \eqref{eqn: extended domain of two parameters}. Therefore, the order of partial derivatives can be interchanged.
\begin{equation}\label{eqn: change of order of the derivatives of two-parameter map}
  \partial_{t}\partial_{s}U^{A}(t,s) = \partial_{s}\partial_{t}U^{A}(t,s).
\end{equation}
Moreover, by \eqref{eqn: gauge under the geodesic flow}, we have
\begin{equation}\label{eqn: gauge under the geodesic flow with end point invariant}
 \boldsymbol{\varphi}(\gamma(t)) = U^{B}(t,s^{\prime\prime})^{-1}U^{A}(t,s^{\prime\prime})
\end{equation}

Letting $s=t$ in equation \eqref{eqn: fundamental solution of the parallel transport}, we obtain 
\begin{equation}\label{eqn: the derivative of U(t,t) on s parameter}
   \partial_{s}U^{A}(t,t) = -\langle A(\gamma(t)),\dot{\gamma}(t))\rangle.
\end{equation}
Next, we differentiate the equation \eqref{eqn: fundamental solution of the parallel transport} with respect to $t$. Using the commutation of partial derivatives of the two-parameter map established in  \eqref{eqn: change of order of the derivatives of two-parameter map}, we find that
\begin{equation}\label{eqn: transport equation of the first order deriavtive}
  \partial_{s}\partial_{t} U^{A}(t,s) + \langle A(\gamma(s)),\dot{\gamma}(s)\rangle \partial_{t}U^{A}(t,s) = 0.
\end{equation}
The initial condition $U^{A}(t,t)= I_n$, together with equation \eqref{eqn: the derivative of U(t,t) on s parameter}, implies that
\begin{equation}\label{eqn: initial value of the first order derivative}
  \partial_{t}U^{A}(t,t) = -\partial_{s}U^{A}(t,t) = \langle A(\gamma(t)),\dot{\gamma}(t)\rangle.
\end{equation}

Then, $\partial_{t}U^{A}(t,s)$ satisfies the transport equation \eqref{eqn: transport equation of the first order deriavtive} with initial condition given by \eqref{eqn: initial value of the first order derivative}. Recall that $U^{A}(t,s)$, as the solution of \eqref{eqn: fundamental solution of the parallel transport} with initial data $I_n$, is the fundamental solution to the transport equation. Thus, by the uniqueness of linear ordinary differential equations, we obtain
\begin{equation}\label{eqn: solution of the first order derivative}
 \partial_{t}U^{A}(t,s) = U^{A}(t,s)\langle A(\gamma(t)),\dot{\gamma}(t)\rangle.
\end{equation}

Moreover, we compute the derivative of the inverse matrix $\partial_{t}\l( U^{A}(t,s)^{-1}\r)$. Differentiating both sides of the identity
$$
  U^{A}(t,s)^{-1} U^{A}(t,s) = I_n
$$
with respect to $t$, we obtain
$$
  \partial_{t} \l(U^{A}(t,s)^{-1} \r) U^{A}(t,s) + U^{A}(t,s)^{-1}\partial_{t}U^{A}(t,s) = 0.
$$
By the expression of the partial derivative $\partial_{t}U^{A}(t,s)$ given in \eqref{eqn: solution of the first order derivative}, we have
\begin{align}
 \notag  \partial_{t}\l(U^{A}(t,s)^{-1} \r) &= -U^{A}(t,s)^{-1}\partial_{t}U^{A}(t,s)U^{A}(t,s)^{-1}\\
  \notag &=-U^{A}(t,s)^{-1} U^{A}(t,s)\langle A(\gamma(t)),\dot{\gamma}(t)\rangle U^{A}(t,s)^{-1}\\
\label{eqn: the first order derivative of the inverse}   &= -\langle A(\gamma(t)),\dot{\gamma}(t)\rangle U^{A}(t,s)^{-1}.
\end{align}

We observe that both \eqref{eqn: solution of the first order derivative} and \eqref{eqn: the first order derivative of the inverse} hold similarly for $U^{B}(t,s)$. Combining these with equation \eqref{eqn: gauge under the geodesic flow with end point invariant}, we compute the derivative of the gauge
\begin{align*}
\partial_{t}\boldsymbol{\varphi}(\gamma(t)) &= \partial_{t}\l(U^{B}(t,s)^{-1} \r) U^{A}(t,s) + U^{B}(t,s)^{-1}\partial_{t}U^{A}(t,s)\\
&= - \langle B(\gamma(t)),\dot{\gamma}(t)\rangle U^{B}(t,s)^{-1}U^{A}(t,s) + U^{B}(t,s)^{-1}U^{A}(t,s)\langle A(\gamma(t)),\dot{\gamma}(t)\rangle\\
&= - \langle B(\gamma(t)),\dot{\gamma}(t)\rangle \boldsymbol{\varphi}(\gamma(t)) + \boldsymbol{\varphi}(\gamma(t))\langle A(\gamma(t)),\dot{\gamma}(t)\rangle\\
&= \langle(\boldsymbol{\varphi}A- B\boldsymbol{\varphi})(\gamma(t)),\dot{\gamma}(t)\rangle.
\end{align*}
Evaluating at $t =0$, we obtain
$$
 \langle d\boldsymbol{\varphi}(y), w\rangle = \langle \boldsymbol{\varphi}(y)A(y)- B(y)\boldsymbol{\varphi}(y),w\rangle.
$$
The identity also holds for all $\tilde{w}$ in a small neighborhood $W\subseteq L_{y}^{+}M$ of $w$. Since the linear span of vectors in $W$ equals the tangent space $T_yM$, it follows that 
$$
  d\boldsymbol{\varphi} = \boldsymbol{\varphi}A-B\boldsymbol{\varphi} \quad \text{on } \D.
$$
Left-multiplying both sides by $\boldsymbol{\varphi}^{-1}$, we conclude that 
$$
 A = B\triangleleft\boldsymbol{\varphi}.
$$
This completes the proof.
\end{proof}

\section{Microlocal analysis of linear waves }\label{sec: linear waves}
\subsection{Connection wave operator}
In this section, we derive the expression of the connection wave operator $\Box_{g,A}$ in local coordinates $(x^{0},x^{1},x^{2},x^{3})$ on the Lorentzian manifold $(M,g)$. The metric tensor is given by $g = g_{ij}dx^i dx^j$, with inverse $g^{ij}$. Let $|g|$ denote the absolute value of the determinant of the matrix $(g_{ij})$.

The $L^{2}-$ inner product on $L^2(M;E)$ is given by 
$$
  \langle u,v\rangle_{L^{2}(M;E)} = \int_{M}\langle u,v\rangle_{E} |g|^{1/2}dx.
$$
Note that the covariant derivative $d+A$ on the vector bundle $E$ maps $C^{\infty}(M;E)$ to $C^{\infty}(M;T^{\ast}M\otimes E)$, which is the space of $1-$forms taking values on $E$. Given compactly supported smooth $1-$forms $\alpha,\beta \in C_{c}^{\infty}(M;E\otimes T^{\ast}M)$, with local expressions $\alpha = \alpha_{i}dx^{i}$ and $\beta = \beta_{j}dx^j$, the $L^{2}-$inner product in $T^{\ast}M\otimes E$ is 
$$
  \langle \alpha,\beta\rangle_{L^{2}(M;T^{\ast}M\otimes E)} = \int_{M} g^{ij}\langle\alpha_{i},\beta_{j}\rangle_{E}|g|^{1/2} dx
$$

Recall that $A = A_{i}dx^{i}$ with each $A_{i} \in C^{\infty}(M;\mathfrak{u}(n))$, where $\mathfrak{u}(n)$ denotes the Lie algebra of skew-Hermitian $n\times n$ matrices. Given $u \in C_{c}^{\infty}(M;E)$, the covariant derivative has the local expression
$$
(d+A) u = (\partial_{x^i}u + A_{i}u)dx^{i}.
$$
Let $\beta = \beta_i dx^{i}\in C_{c}^{\infty}(M;T^{\ast}M\otimes E)$. Then the $L^2-$inner product of $(d+A)u$ and $\beta$ in $L^{2}(M;T^{\ast}M\otimes E)$ is given by
$$
  \langle (d+A)u,\beta\rangle_{L^{2}(M;T^{\ast}M\otimes E)} = \int_{M} g^{ij}\langle \partial_{x^i}u+ A_{i}u, \beta_{i}\rangle_{E}|g|^{1/2}dx.
$$
Using integration by parts and the fact that $u$ and $\beta$ have compact supports, we obtain
\begin{align*}
\langle u, (d+A)^{\ast}\beta\rangle_{L^{2}(M;E)} &= \langle (d+A)u,\beta\rangle_{L^{2}(M;T^{\ast}M\otimes E)}  \\
&= -\int_{M} |g|^{1/2} \langle u, |g|^{-1/2}\partial_{x^i}( |g|^{1/2}g^{ij}\beta_{i})+g^{ij}A_{i}\beta_{i}\rangle_{E}dx.
\end{align*}
Thus, for any $\beta = \beta_{i}dx^{i}\in C^{\infty}(M;T^{\ast}M\otimes E)$, the adjoint of the covariant derivative $d+A$ is given in local coordinates by
$$
 (d+A)^{\ast}\beta = -|g|^{-1/2}\partial_{x^{i}}(|g|^{1/2}g^{ij}\beta_i) - g^{ij}A_i\beta_i.
$$
Consequently, the connection wave operator $\Box_{g,A} = (d+A)^{\ast}(d+A)$ takes the form, see \cite[Section 2.1]{CLOP},
\begin{multline*}
  \Box_{g,A} u  =  (d+A)^{\ast}(d+A)u\\
  = -|g|^{-1/2}\partial_{x^i}(|g|^{1/2}g^{ij}\partial_{x^j}u) -2g^{ij}A_i \partial_{x^j}u - |g|^{-1/2}\partial_{x^i}(|g|^{1/2}g^{ij}A_j)u-g^{ij}A_{i}A_{j}u.
\end{multline*}

To apply the framework of microlocal analysis developed in \cite{DH,H}, it is necessary to consider the operator $\Box_{g,A}$ acting on half-density valued functions. The half-density bundle $\Omega^{1/2}$ over $M$ is a one-dimensional complex vector bundle locally trivialized by $|g|^{1/4}$. As a result, a section  $\tilde{u}\in H^{s}(M;E)$ corresponds to a half-density valued function
\begin{equation}\label{eqn: correspond to half-density valued function}
   u :=\tilde{u}|g|^{1/4} \in H^{s}(M;E\otimes\Omega^{1/2}).
\end{equation}
and vice versa.

The conjugated wave operator $P(\cdot):= \frac{1}{2} |g|^{1/4}\Box_{g,A}(|g|^{-1/4} \cdot)$, acting on half-density valued distributions, takes the following form in local coordinates.
\begin{equation}\label{eqn: conjugated wave operator}
\begin{aligned}
  P =& -\frac{1}{2}g^{ij}\partial_{x^i}\partial_{x^j} -\frac{1}{2}\partial_{x^i}g^{ij}\partial_{x^j}-g^{ij}A_{i}\partial_{x_j}\\
  &-\frac{1}{2}|g|^{-1/4}\partial_{x^i}(g^{ij}|g|^{1/2}\partial_{x^j}(|g|^{-1/4}))-g^{ij}A_i\partial_{x^j}(\log|g|^{-1/4})\\
  &-\frac{1}{2}|g|^{-1/2}\partial_{x^i}(|g|^{1/2}g^{ij}A_j) - \frac{1}{2}g^{ij}A_iA_j.
\end{aligned}
\end{equation}

Accordingly, the semi-linear wave equation \eqref{eqn: connection wave equation with a source} can be rewritten for $u\in H^{k+1}(M;E\otimes \Omega^{1/2})$ with $k\ge 4$ as follows.
\begin{equation}\label{eqn: connection wave equation of half-density valued}
 \begin{aligned}
  \begin{cases}
Pu + \frac{1}{2}\l||g|^{-1/4}u\r|^{2} u = \frac{1}{2}|g|^{1/4} f,\quad \text{on}&(0,T)\times N,\\
   u(0,x^{\prime})=0, \partial_{t}(0,x^{\prime}) = 0,\quad &x^{\prime}\in N.
   \end{cases}
   \end{aligned}
\end{equation}
where $\frac{1}{2}|g|^{1/4}f$ is a half-density valued source in $H^{k}(M;E\otimes \Omega^{1/2})$.

Let $(x,\xi)\in T^{\ast}M$. The principal symbol of $P$, in the sense of \cite[Definition 3.8]{Grigis-Sjostrand}, is
$$
 \sigma[P](x,\xi) =\frac{1}{2}g^{ij}\xi_i\xi_j.
$$

The subprincipal symbol of $P$, defined in a coordinate invariant manner as in \cite[Theorem 18.1.33]{H3}, is given by
$$
 \sigma_{\sub}[P](x,\xi) = \imath^{-1} g^{ij}A_{i}\xi_j.
$$
Here $\imath = \sqrt{-1}$.

\subsection{Lagrangian distributions}
To study the microlocal structure of the solution to  the linear wave equation with a connection,
\begin{equation}\label{eqn: linear wave equation with a connection on half-density}
 \begin{aligned}
   \begin{cases}
 Pu =f,\quad&\text{on } (0,T)\times N,\\
  u(0,x^{\prime}) = 0, \partial_{t}u(0,x^{\prime} )=0\quad &x^{\prime}\in N
  \end{cases}
  \end{aligned}
\end{equation}
where $f$ and $u$ are sections on the vector bundle $E\otimes \Omega^{1/2}$, we begin by briely reviewing the the theory of Lagrangian distributions in the sense of \cite{H,H4}.

Let $X$ be a smooth manifold of dimension $n$. The local coordinates on the cotangent bundle $T^{\ast}X$ are denoted by $(x^1,\dots,x^n;\xi_1,\dots,\xi_n)$. Let $\Lambda\subseteq T^{\ast}X\b 0$ be a smooth conic Lagrangian submanifold, as defined in \cite[Definition 21.1.8, 21.2.5]{H3}.

Suppose that $\phi(x,\theta)$ is a nondegenerate phase function in the sense of \cite[Definition 21.2.15]{H3}, which parametrizes $\Lambda$ in a conic neighborhood $\Gamma\subseteq X\times\R^{N}$. Then,
$$
 \{(x,d_x\phi(x,\theta)): d_{\theta}\phi(x,\theta)=0, (x,\theta)\in\Gamma\} \subseteq \Lambda.
$$
Recall that $E$ is a Hermitian vector bundle and $\Omega^{1/2}$ denotes the half-density bundle over $X$. We define $E\otimes\Omega^{1/2}$-valued Lagrangian distributions as follows. 
\begin{definition}\label{def: Lagrangian distributions}
Let $\Lambda\subseteq T^{\ast}X\b 0$ be a conic Lagrangian submanifold. The space $I^{m}(X;\Lambda;E\otimes \Omega^{1/2})$ consists of $E\otimes \Omega^{1/2}$-valued distributions $u\in \mathcal{D}^{\prime}(X)$ associated with $\Lambda$ satisfying $\WF(u)\subseteq \Lambda$. In local coordinates $x\in U\subseteq X$, the distibution $u$ is given by an oscillatory integral of
  $$
     u(x) = (2\pi)^{\frac{n-2N}{4}}\int_{\R^N} e^{\imath \phi(x,\theta)}a(x,\theta)d\theta,\quad x\in U,
  $$
  where 
  \begin{itemize}
      \item $\phi(x,\theta)$ is a nondegenerate phase function paramerizing $\Lambda$.
      \item  $a(x,\theta)\in S^{m+\frac{n}{4}-\frac{N}{2}}(U\times \R^{N}\b 0;E\otimes\Omega^{1/2})$ is a classical symbol in the sense of \cite[Definition 18.1.1]{H3}.
  \end{itemize}
\end{definition}
The Hessian of the phase function $\phi(x,\theta)$ is the nondegenerate symmetric matrix
$$
 \Phi = \begin{pmatrix}  
 \phi_{xx} & \phi_{x\theta} \\ 
 \phi_{\theta x} & \phi_{\theta\theta} 
 \end{pmatrix}.
$$
Denote by $\sgn\Phi$ the signature of $\Phi$, and define the associated density 
\begin{equation}\label{eqn: density}
d_C = |d\xi| \ |\det\Phi|^{-1},
\end{equation}
where $\xi$ denotes the fiber coordinate in $T^{\ast}X$.

The principal symbol of the Lagrangian distribution $u$ is define by
\begin{multline}\label{eqn: principal symbol}
 \sigma[u](x,d_x\phi) = e^{\frac{\imath\pi }{4}\sgn\Phi}a(x,d_x\phi)\sqrt{d_C} \\
 \in S^{m+\frac{n}{4}}(\Lambda;E\otimes\Omega_{1/2}\otimes L) /S^{m+\frac{n}{4}-1}(\Lambda;E\otimes\Omega_{1/2}\otimes L), 
\end{multline}
where $\Omega_{1/2}$ is the half-density bundle over $\Lambda$, and $L$ denotes the Keller-Maslov line bundle, see \cite[pp. 13-16]{H4}. For simplicity, we write $S^{m+\frac{n}{4}}(\Lambda)$ to denote the symbol space $S^{m+\frac{n}{4}}(\Lambda;E\otimes\Omega_{1/2}\otimes L)$.

 Kurylev-Lassas-Uhlmann \cite{KLU} developed a framework for using conormal waves to detect singularities generated by nonlinear wave interactions. Conormal waves are locally modeled by conormal distributions, which form a special subclass of Lagrangian distributions. We briefly recall the theory of conormal distributions in the sense of \cite[Section 18.2]{H3}.

Let $S\subseteq X$ be an $r$-dimensional smooth submanifold. The conormal bundle of $S$ is defined by
$$
  N^{\ast}S = \{(x,\xi)\in T^{\ast}X\b0: x\in S,\xi|_{T_{x}S} = 0\}.
$$
In fact, $N^{\ast}S$ is a Langrangian submanifold of $T^{\ast}X\b 0$.

In local slice coordinates, the submanifold $S$ and its conormal bundle $N^{\ast}S$ are respectively written as 
$$
S =\{(x^{1},\dots,x^{r})\},\quad N^{\ast}S = \{ (x^1,\dots,x^r;\xi_{r+1},\dots,\xi_n)\}.
$$
Let us write the local coordinates on $X$ as $x = (x^{\prime},x^{\prime\prime})$, where $x^{\prime} = (x^1,\dots,x^r)$ and $x^{\prime\prime} = (x^{r+1},\dots,x^{n})$, and similarly write the fiber variables $\xi =(\xi^{\prime},\xi^{\prime\prime})$, with $\xi^{\prime} = (\xi_1,\dots,\xi_r)$ and $\xi^{\prime\prime} = (\xi^{r+1},\dots,\xi^{n})$. 

\begin{definition}\label{def: conormal distributions}
The space $I^{m}(N^{\ast}S;E\otimes \Omega^{1/2})$ consists of $E\otimes \Omega^{1/2}$-valued distributions $u\in\mathcal{D}^{\prime}(X)$ associated to the conormal bundle $N^{\ast}S$, with wavefront set $\WF(u) \subseteq N^{\ast}S$, and locally of the form
  $$
    u(x) = (2\pi)^{\frac{n-2r}{4}} \int_{\R^{n-r}} e^{\imath x^{\prime\prime}\xi^{\prime\prime}}a(x^{\prime},\xi^{\prime\prime})d\xi^{\prime\prime}, 
  $$
  where $a\in S^{m+\frac{n}{4}-\frac{n-r}{2}}(\R^{r}\times(\R^{n -r}\b 0); E\otimes \Omega^{1/2})$ is a classical symbol.
\end{definition}
In fact, the conormal distribution space $I^{m}(N^{\ast}S;E\otimes\Omega^{1/2})$ coincides with the Lagrangian distribution space $I^{m}(X;N^{\ast}S;E\otimes \Omega^{1/2})$. The principal symbol of a distribution $u$ in this space is given by 
$$
 \sigma[u](x^{\prime},\xi^{\prime\prime}) = a(x^{\prime},\xi^{\prime\prime})|dx^{\prime}|^{1/2}|d\xi^{\prime\prime}|^{1/2}\in S^{m+\frac{n}{4}}(N^{\ast}S;\Omega_{1/2}) /S^{m+\frac{n}{4}}(N^{\ast}S;\Omega_{1/2}),
$$
where $\Omega_{1/2}$ denotes the half-density bundle over $N^{\ast}Y$. Similarly, we will write $S^{m+\frac{n}{4}}(N^{\ast}S)$ to denote $S^{m+\frac{n}{4}}(N^{\ast}S;\Omega_{1/2})$. We remark that the Keller-Maslov line bundle over a conormal bundle $N^{\ast}S$ is trivial by \cite[Theorem 3.3.4]{H}.

\subsection{Paired Lagrangian distributions}
To analyze the microlocal structure of wave equations with source terms, such as \eqref{eqn: linear wave equation with a connection on half-density}, Melrose-Uhlmann \cite{MU} and Guillemin-Uhlmann \cite{GU81} developed the theory of Intersecting Paired Lagrangian distributions.

Let $\Lambda_0\subseteq T^{\ast}X\b0$ be a conic Lagrangian submanifold, and let $\Lambda_1\subseteq T^{\ast}X\b 0$ be a conic Lagrangian submanifold with boundary. We say that $(\Lambda_0,\Lambda_1)$ is an \textit{intersecting pair of Lagrangian submanifolds}, in the sense of \cite[Definition 1.1]{MU}, if $\Lambda_0$ and $\Lambda_1$ intersect cleanly, that is,
$$
  \Lambda_0 \cap \Lambda_1 =\partial\Lambda_1,\quad T_{\lambda}(\Lambda_0)\cap T_{\lambda}(\Lambda_1) = T_{\lambda}(\partial\Lambda_1),\quad \lambda\in \partial\Lambda_1
$$

Following \cite[Definition 3.1]{MU}, we define the space of intersecting paired Lagrangian distributions.
\begin{definition}
Let $(\Lambda_0,\Lambda_1)$ be an intersecting pair of Lagrangian submanifolds. The space 
$$
I^{m}(X;\Lambda_0,\Lambda_1;E\otimes \Omega_{1/2})
$$
consists of $E\otimes \Omega^{1/2}$-valued distributions $u\in \mathcal{D}^{\prime}(X)$, with wavefront sets $\WF(u)\subseteq \Lambda_0\cup\Lambda_1$, of the form
   $$
       u = u_0+u_1+\sum_{j}F_j v_j,
   $$
   where 
   \begin{itemize} 
     \item $u_0 \in I^{m-1/2}(X;\Lambda_0;E\otimes\Omega^{1/2})$ and $u_1\in I^{m}(X;\Lambda_1\b\partial\Lambda_1;E\otimes \Omega^{1/2})$.

     \item Each $F_j$ is a zeroth order elliptic Fourier integral operator associated with a homogeneous canonical transformation $\chi^{-1}_j$, where $\partial\Lambda_1 \subseteq \bigcup_{j}V_j$ and $\chi_j: V_j \to T^{\ast}\R^{n}$.

     \item The sum $\sum_{j}F_{j}v_j$ is locally finite.

     \item Each $v_j$ is a distributional half-density locally of the form
     $$
    v_j(x) = \int_{0}^{+\infty}\int_{\R^{n}}e^{\imath((x^1-s)\xi_1+x^{\prime}\xi^{\prime})}a_{j}(s,x,\xi)d\xi ds,
     $$
     where $ a_j\in S^{m+\frac{1}{2}-\frac{n}{4}}(\R^{n+1}\times (\R^{n}\b 0); E\otimes\Omega^{1/2})
     $ is a classical symbol.
   \end{itemize}
\end{definition}

Given $u\in I^{m}(X;\Lambda_0,\Lambda_1;E\otimes\Omega^{1/2})$, it follows from \cite[Proposition 4.1]{MU} that for any zeroth  order pseudodifferential operators $B_0,B_1$ satisfying 
$$
\WF(B_0) \cap \Lambda_1 = \varnothing, \quad\text{and}\quad \WF(B_1) \cap \Lambda_0 = \varnothing,
$$
we have the microlocal regularity properties
$$
  B_0u \in I^{m-1/2}(X;\Lambda_0\b\partial\Lambda_1;E\otimes\Omega^{1/2}),\quad B_1 u \in I^{m}(X;\Lambda_1\b\partial\Lambda_0;E\otimes\Omega^{1/2}).
$$
Based on this, \cite[(4.4),(4.5)]{MU} defines the principal symbol of $u$ away from the intersection $\partial\Lambda_1$ as
\begin{align*}
   \sigma[u](\lambda_0) &= \sigma[B_0 u](\lambda_0)/\sigma[B_0](\lambda_0), \quad \lambda_0\in\Lambda_0\b\partial\Lambda_1,\\
    \sigma[u](\lambda_1) &= \sigma[B_1 u](\lambda_1)/\sigma[B_1](\lambda_1), \quad \lambda_1\in\Lambda_1\b\partial\Lambda_1,
\end{align*}
provided that each $B_j$ is elliptic near $\lambda_j$ for $j=0,1$. 
Moreover, the principal symbol $\sigma[u]_{\Lambda_1\b\partial\Lambda_1}$ extends smoothly up to $\partial\Lambda_1$.

On the intersection $\partial\Lambda_1$, there exists a symbol transition map $\mathscr{R}$ defined in \cite[(4.7),(4.12)]{MU}, such that
\begin{align*}
  \mathscr{R}: S^{m-\frac{1}{2}+\frac{n}{4}}(\Lambda_0\b\partial\Lambda_1) &\longrightarrow S^{m+\frac{n}{4}}(\Lambda_1\b\partial\Lambda_1)\\
  \sigma[u]|_{\Lambda_0\b\partial\Lambda_1} &\longmapsto  \sigma[u]|_{\partial\Lambda_1}.
\end{align*}
More precisely, for all $\lambda \in \partial\Lambda_1$, there exists a conic neighborhood $V$ in $T^{\ast}X \b 0$ and a homogeneous canonical transformation $\chi$ such that
\begin{itemize}
  \item $\chi(V\cap \Lambda_0) \subseteq \tilde{\Lambda}_0:=\{(0,\tilde{\xi} );\tilde{\xi}\in\R^{n}\}$.
  \item $\chi(V\cap \Lambda_1)\subseteq \tilde{\Lambda}_1=\{(\tilde{x}^{1},0;0,\tilde{\xi}^{\prime});\tilde{x}^{1}\ge 0,\tilde{\xi}^{\prime}\in \R^{n-1}\}$.
  \item $\chi(\lambda) \in \tilde{\Lambda}_0 \cap \tilde{\Lambda}_1 = \partial\tilde{\Lambda}_1$.
\end{itemize}
In this coordinate, the principal symbol of $u$ on $\tilde{\Lambda}_0$ is written as
$$
   \sigma[u](0,\tilde{\xi}) = a(0,\tilde{\xi})|d\tilde{\xi}|^{1/2},
$$
where $a\in S^{m-1/2-n/4}(\tilde{\Lambda}_0\b\partial\tilde{\Lambda}_1;E)$ and $|d\tilde{\xi}|^{1/2}$ is a trivilization of the half-density bundle on $\tilde{\Lambda}_0$. Generally, $a$ is singular on $\partial\tilde{\Lambda}_1$. Thus,
\begin{equation}\label{eqn: mapping property of R}
 \mathscr{R}(\sigma[u])(0;0,\tilde{\xi}^{\prime}) = \imath\l.(\tilde{\xi}_1 a(0,\tilde{\xi}^{\prime}))\r|_{\tilde{\xi}_1 = 0} |d\tilde{x}^{1}|^{1/2}|d\tilde\xi^{\prime}|^{1/2}.
\end{equation}
By \cite[Proposition 3.2]{MU}, the definition of $\mathscr{R}$ does not depend on the choice of the homogeneous canonical transformation $\chi$.

\subsection{Distorted plane waves}
In this section, we construct distorted plane waves following \cite[Section 3.2.3]{KLU}, using the theory of intersecting paired Lagrangian distributions.

Recall the conjugated wave operator $P$ defined in \eqref{eqn: conjugated wave operator}. The characteristic set $\Char(P)$ coincides with the null cotangent bundle $L^{\ast}M$.

Let $\Lambda_0\subseteq T^{\ast}M\b 0$ be a conic Lagrangian submanifold such that the Hamiltonian vector field $H_P$ is nowhere tangent to $\Lambda_0$.  Denote by $\Lambda_1$ the future flowout of $\Lambda_0 \cap \Char(P)$ under the Hamiltonian vector field $H_P$. Then $\Lambda_1\subseteq T^{\ast}M\b 0$ is a conic Lagrangian submanifold with boundary 
$$
\partial\Lambda_1 = \Lambda_0 \cap \Char(P).
$$
In this setting, the pair $(\Lambda_0,\Lambda_1)$ forms an intersecting pair of Lagrangian submanifolds.

For such a pair $(\Lambda_0,\Lambda_1)$, the following result from \cite[Proposition 6.6]{MU} describes the microlocal structure of the solutions to the equation \eqref{eqn: linear wave equation with a connection on half-density}.
\begin{theorem}\label{thm: paramatrix of the wave equations}
   Let $f\in I^{\mu+3/2}(M;\Lambda_0;E\otimes\Omega^{1/2})$ be the source term in the wave equation \eqref{eqn: linear wave equation with a connection on half-density}. Then, the solution $u$ belongs to the space
   $$
  u\in I^{\mu}(M;\Lambda_0,\Lambda_1; E\otimes\Omega^{1/2}),
   $$
   and its principal symbol $\sigma[u]$ satisfies the transport equation
   \begin{equation}\label{eqn: transport equation for principal symbol}
     \begin{aligned}
       \begin{cases}
        (\mathscr{L}_{H_P}+\imath \sigma_{\sub}[P])\sigma[u] = 0\quad&\text{on }\Lambda_1\b \Lambda_0,\\
        \sigma[u] = \mathscr{R}(\sigma[P]^{-1}\sigma[f]) \quad&\text{on }\partial\Lambda_1,
        \end{cases}
     \end{aligned}
     \end{equation}
     where $\mathscr{L}_{H_P}$ denotes the Lie derivative along the Hamilton vector field $H_P$.
\end{theorem}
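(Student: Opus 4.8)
The plan is to construct the parametrix for the linear wave equation \eqref{eqn: linear wave equation with a connection on half-density} by reduction to the model case treated in \cite{MU}, carefully tracking how the connection $A$ enters through the subprincipal symbol $\sigma_{\sub}[P]$. I would begin with the forward solution operator $Q$ for $P$ on the globally hyperbolic manifold $(M,g)$ with vanishing Cauchy data on $\{0\}\times N$; existence and uniqueness of $u = Qf$ in the appropriate Sobolev scale is guaranteed by well-posedness of the Cauchy problem (\cite[Theorem 12.19]{Ringstrom}), so the content is entirely the microlocal structure. Since $P$ is a second-order operator with scalar real principal symbol $\sigma[P](x,\xi) = \tfrac12 g^{ij}\xi_i\xi_j$ and $\Char(P) = L^{\ast}M$, and since $H_P$ is nowhere tangent to $\Lambda_0$, the pair $(\Lambda_0,\Lambda_1)$ with $\Lambda_1$ the forward flowout of $\Lambda_0\cap\Char(P)$ is a genuine intersecting pair in the sense of \cite[Definition 1.1]{MU}. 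The first key step is therefore to invoke \cite[Proposition 6.6]{MU} (or its half-density, vector-bundle-valued analogue), which produces a parametrix $Q$ mapping $I^{\mu+3/2}(M;\Lambda_0;E\otimes\Omega^{1/2})$ into $I^{\mu}(M;\Lambda_0,\Lambda_1;E\otimes\Omega^{1/2})$, modulo smoothing operators; the forward support condition then pins down $Qf = u$ exactly as the physical solution.

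Next I would extract the two transport statements. Away from the intersection, i.e. on $\Lambda_1\setminus\Lambda_0$, the distribution $u$ is an honest Lagrangian distribution in $I^{\mu}(M;\Lambda_1\setminus\partial\Lambda_1;E\otimes\Omega^{1/2})$, and the equation $Pu \equiv 0$ microlocally there forces the standard first-order transport equation on the principal symbol. The point is that for a scalar real principal symbol the symbol calculus of \cite[Theorem 25.2.4]{H4} or \cite[Section 18.1]{H3} gives, modulo lower order,
\begin{equation}
 \sigma[Pu] = \left(\frac{1}{\imath}\mathscr{L}_{H_P} + \sigma_{\sub}[P]\right)\sigma[u] \pmod{\text{lower order}},
\nonumber
\end{equation}
where $\mathscr{L}_{H_P}$ is the Lie derivative acting on sections of $E\otimes\Omega_{1/2}\otimes L$ over $\Lambda_1$ (the $E$-part of $H_P$ being trivial since the principal symbol is scalar, the connection enters only through $\sigma_{\sub}[P] = \imath^{-1}g^{ij}A_i\xi_j$, computed in the excerpt). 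Multiplying by $\imath$ yields $(\mathscr{L}_{H_P} + \imath\,\sigma_{\sub}[P])\sigma[u] = 0$, which is the first line of \eqref{eqn: transport equation for principal symbol}. For the boundary condition on $\partial\Lambda_1 = \Lambda_0\cap\Char(P)$, I would use the symbol transition map $\mathscr{R}$ of \cite[(4.7),(4.12)]{MU}: microlocally near $\partial\Lambda_1$ the equation $Pu = f$ reads, after conjugation by the Fourier integral operators $F_j$ to the model $\tilde\Lambda_0,\tilde\Lambda_1$, as a transport/division problem whose solution expresses the symbol of $u$ on $\tilde\Lambda_1$ as $\mathscr{R}$ applied to $\sigma[P]^{-1}\sigma[f]$ on $\tilde\Lambda_0$; unwinding via \eqref{eqn: mapping property of R} gives exactly $\sigma[u] = \mathscr{R}(\sigma[P]^{-1}\sigma[f])$ on $\partial\Lambda_1$. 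Here the hypothesis $f\in I^{\mu+3/2}(M;\Lambda_0;\cdot)$ is precisely calibrated so that the orders match: $u_0\in I^{\mu-1/2}$ on $\Lambda_0$ and $u_1\in I^{\mu}$ on $\Lambda_1$, consistent with Definition of paired distributions.

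The main obstacle I anticipate is bookkeeping rather than conceptual: one must verify that the scalar, real-principal-type calculus of \cite{MU} carries over verbatim when the operator acts on sections of the Hermitian bundle $E\otimes\Omega^{1/2}$ and when the subprincipal symbol is matrix-valued (i.e. $\mathfrak{u}(n)$-valued). Since $\sigma[P]$ is scalar and proportional to $I_n$, the Hamiltonian $H_P$ and its flowout, and hence $\Lambda_0,\Lambda_1$ and $\mathscr{R}$, are unaffected by the bundle structure; only the transport equation acquires the matrix zeroth-order term $\imath\,\sigma_{\sub}[P]$, and the standard ODE theory along the bicharacteristics still applies with values in $\mathrm{End}(E)$. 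A secondary technical point is the clean intersection condition $T_\lambda\Lambda_0\cap T_\lambda\Lambda_1 = T_\lambda(\partial\Lambda_1)$, which follows from the transversality assumption that $H_P$ is nowhere tangent to $\Lambda_0$ together with the fact that $\Lambda_1$ is a flowout; I would record this explicitly before citing \cite[Proposition 6.6]{MU}. Finally, one must confirm that all error terms are genuinely smoothing — i.e. that the parametrix agrees with the true forward solution modulo $C^\infty$ — which is where global hyperbolicity and the support properties of the forward fundamental solution are used.
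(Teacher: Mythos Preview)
Your proposal is correct in spirit and substance, but you should know that the paper does not actually prove this theorem: it is stated as a direct citation of \cite[Proposition 6.6]{MU}, with no proof given. Your outline is essentially a sketch of how that cited result works (and how it extends to the bundle-valued setting), so in that sense your approach \emph{is} the paper's approach---you have simply unpacked what the paper takes as a black box. The points you flag (scalar real principal symbol so the Lagrangians and $\mathscr{R}$ are unchanged; the matrix-valued subprincipal term entering only in the transport ODE; clean intersection from transversality of $H_P$ to $\Lambda_0$) are exactly the adaptations one needs to justify invoking \cite{MU} in this context, and they are all correct.
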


Distorted plane waves emanate from a point source on $x_0$ and propagate along the light cone. Fix an auxiliary Riemannian metric $g^{+}$ on $M$ and write $\|\cdot\|_{g^{+}}$ for the corresponding norm. Let $(x_0,\zeta_0)\in LM$ and let $s_0>0$ be sufficiently small. Define a neighborhood of $\zeta_0$ on the sphere of radius $\|\zeta_0\|_{g^{+}}$ as
$$
  \mathcal{V}_{x_0,\zeta_0,s_0} = \{\eta\in T_{x_0}M: \| \eta\|_{g^+} = \|\zeta_0 \|_{g^{+}} , \|\eta-\zeta_0\|_{g^+}< s_0\}.
$$

For a tangent vector $\eta\in T_{x_0}M$, let $\eta^{\flat}\in T^{\ast}_{x_0}M$ denote the corresponding covector given by $\eta^{\flat}_i = g_{ij}\eta^{j}$. Conversely, for $\xi\in T^{\ast}_{x_0}M$, the associated vector $\xi^{\sharp}\in T_{x_0}M$ is given by $(\xi^{\sharp})^{j}=g^{ij}\xi_{i}$.

Consider a source of the linear wave equation \eqref{eqn: connection wave equation of half-density valued} with wavefront set
$$
 \Sigma(x_0,\zeta_0,s_0) = \{(x_0,r\eta^{\flat})\in T^{\ast}M: \eta\in \mathcal{V}_{x_0,\zeta_0,s_0}, r\in \R\b 0\}.
$$
Moreover, define
$$
\mathcal{W}_{x_0,\zeta_0,s_0} := \mathcal{V}_{x_0,\zeta_0,s_0}\cap LM.
$$ 
 Let $\Lambda(x_0,\zeta_0,s_0)$ denote the flowout of $\Sigma \cap \Char(P)$, that is, 
$$
 \Lambda(x_0,\zeta_0,s_0)=\{(\gamma_{x_0,\eta}(t),r\dot{\gamma}_{x_0,\eta}(t)^{\flat})\in T^{\ast}M; \eta \in \W_{x_0,\zeta_0,s_0}, t\in [0,+\infty), r\in \R\b 0\}.
$$
For convenience, we write
$$
 \Sigma:= \Sigma(x_0,\zeta_0,s_0),\quad\text{and}\quad \Lambda:=\Lambda(x_0,\zeta_0,s_0).
$$

Let the source term of the linear wave equation \eqref{eqn: connection wave equation of half-density valued} be $f\in I^{\mu+3/2}(M;\Sigma;E\otimes\Omega^{1/2})$. Then the solution $u$ lies in the space 
$$
I^{\mu}(M;\Sigma,\Lambda;E\otimes\Omega^{1/2}).
$$


Let the flowout from $x_0$ be 
\begin{equation}\label{eqn: flowout submanifold}
  K(x_0,\zeta_0,s_0) = \{\gamma_{x_0,\eta}(s)\in M: \eta \in \mathcal{W}_{x_0,\zeta_0,s_0},  s\in (0,\mathcal{T}(x_0,\eta))\}.
\end{equation}
For convenience, we denote $K:= K(x_0,\zeta_0,s_0)$. For any $y\in K$, there exist $\eta\in \mathcal{W}_{x_0,\zeta_0,s_0}$ and $r>0$ such that $y=\gamma_{x_0,\eta}(r)$.

The following result from \cite[Lemma 3.1]{KLU} shows that $u$ is locally a conormal distribution near each such point before the first cut point.

\begin{lemma}\label{lemma: locally conormal distribution}
Let $y = \gamma_{x_0,\eta}(r) \in K$. If $0<r< \rho(x_0,\eta)$, then there exists a neighborhood $V_0 \subseteq M$ of $y$ such that $K\cap V_0$ is a smooth submanifold of codimension one, and $N^{\ast}(K\cap V_0) \subseteq \Lambda$. Then, the solution $u$ of \eqref{eqn: linear wave equation with a connection on half-density} with $f\in I^{\mu+3/2}(M;\Sigma;E\otimes \Omega^{1/2})$ is a conormal distribution restricted on $V_0$, that is, 
$$
u|_{V_0}\in I^{\mu}(N^{\ast}K;E\otimes\Omega^{1/2}).
$$ 
\end{lemma}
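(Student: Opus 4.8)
The plan is to establish the three assertions of the lemma in turn --- that $K\cap V_0$ is a smooth hypersurface, that $N^{\ast}(K\cap V_0)\subseteq\Lambda$, and that $u|_{V_0}$ is then a conormal distribution of order $\mu$ --- following the argument of \cite[Lemma 3.1]{KLU}. First I would analyze the flowout map $F(\eta,s)=\gamma_{x_0,\eta}(s)$, defined for $\eta\in\mathcal{W}_{x_0,\zeta_0,s_0}$ and $s\in(0,\mathcal{T}(x_0,\eta))$. Since $\mathcal{W}_{x_0,\zeta_0,s_0}$ is an $(n-1)$-dimensional submanifold of the null cone in $T_{x_0}M$, its image $K$ is a priori of codimension one. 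The key hypothesis is $0<r<\rho(x_0,\eta)$, i.e.\ that $y$ lies strictly before the first cut point. By the standard Lorentzian fact that the first conjugate point along a null geodesic occurs at or after the first cut point (see \cite{Beem-Ehrlich-Easley}), the segment $\gamma_{x_0,\eta}|_{[0,r]}$ has no conjugate points, so $dF_{(\eta,r)}$ is injective: the $\partial_s$-direction contributes $\dot\gamma_{x_0,\eta}(r)$, the $\partial_\eta$-directions contribute Jacobi fields vanishing at $s=0$, and the absence of conjugate points makes these jointly independent. Hence $F$ is an immersion near $(\eta,r)$. To upgrade this to an embedding near $y$ I would invoke Lemma \ref{lemma: only causal curve}, which forbids two distinct null geodesics issued from $x_0$ from meeting again before their cut points, together with the lower semi-continuity of $\rho$, which keeps all nearby directions before their cut points; shrinking to a neighborhood $V_0$ of $y$ then makes $K\cap V_0$ a smooth embedded submanifold of codimension one.

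Next I would identify its conormal bundle. Over $y=\gamma_{x_0,\eta}(t)\in K\cap V_0$ the tangent space $T_y(K\cap V_0)$ is spanned by $\dot\gamma_{x_0,\eta}(t)$ and the variation Jacobi fields of the family, and since $g(\dot\gamma_{x_0,\eta'},\dot\gamma_{x_0,\eta'})\equiv 0$ along every geodesic in the family, differentiating this identity in $s$ and in $\eta$ shows $g(\dot\gamma_{x_0,\eta}(t),v)=0$ for all $v\in T_y(K\cap V_0)$. Hence $\dot\gamma_{x_0,\eta}(t)^{\flat}\in N^{\ast}_y(K\cap V_0)$. As both $N^{\ast}_y(K\cap V_0)$ and the fibre of $\Lambda$ over $y$, which by construction is $\{r\dot\gamma_{x_0,\eta}(t)^{\flat}:r\in\R\b 0\}$, are punctured lines, they coincide; moreover the uniqueness invoked above guarantees that $\Lambda$ has no second sheet over $V_0$ (for $y'\in V_0$, lower semi-continuity of $\rho$ keeps the representing parameters before the cut point, so Lemma \ref{lemma: only causal curve} makes them unique). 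Therefore $N^{\ast}(K\cap V_0)=\Lambda\cap T^{\ast}V_0\subseteq\Lambda$.

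Finally, for the conormality of $u$: since $y\ne x_0$, I may take $V_0$ small enough that $T^{\ast}V_0\cap\Sigma=\varnothing$. By Theorem \ref{thm: paramatrix of the wave equations}, the solution $u$ of \eqref{eqn: linear wave equation with a connection on half-density} with $f\in I^{\mu+3/2}(M;\Sigma;E\otimes\Omega^{1/2})$ lies in $I^{\mu}(M;\Sigma,\Lambda;E\otimes\Omega^{1/2})$. Applying the microlocal regularity property \cite[Proposition 4.1]{MU} recalled in the text with a zeroth order pseudodifferential cutoff $B_1$ microsupported in $T^{\ast}V_0$, elliptic on $\Lambda\cap T^{\ast}V_0$ and with $\WF(B_1)\cap\Sigma=\varnothing$, gives $B_1u\in I^{\mu}(M;\Lambda\b\partial\Lambda;E\otimes\Omega^{1/2})$; over $V_0$ this space is exactly $I^{\mu}(N^{\ast}(K\cap V_0);E\otimes\Omega^{1/2})$ by the identification of Lagrangian and conormal distribution spaces for a conormal bundle. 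Since $B_1$ is elliptic on $T^{\ast}V_0\cap\WF(u)$, it follows that $u|_{V_0}\in I^{\mu}(N^{\ast}K;E\otimes\Omega^{1/2})$, as claimed.

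I expect the main obstacle to be the geometric content of the first two paragraphs: combining the no-conjugate-points property with the uniqueness-of-causal-curves lemma and the lower semi-continuity of $\rho$ to obtain a genuinely embedded hypersurface near $y$, and then checking that $\Lambda$ has precisely one sheet over it, equal to $N^{\ast}(K\cap V_0)$. The last paragraph is essentially bookkeeping within the paired-Lagrangian calculus already set up in the excerpt.
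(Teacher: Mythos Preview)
The paper does not supply its own proof of this lemma; it simply states the result and attributes it to \cite[Lemma~3.1]{KLU}. Your sketch correctly expands that argument---local embedding of $K$ near $y$ via absence of conjugate points and uniqueness of causal curves before the cut locus (Lemma~\ref{lemma: only causal curve} together with lower semi-continuity of $\rho$), identification of $N^{\ast}(K\cap V_0)$ with the single sheet of $\Lambda$ over $V_0$, and conormality of $u|_{V_0}$ from the paired-Lagrangian calculus of \cite[Proposition~4.1]{MU}---and is essentially the proof one finds in \cite{KLU}.
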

Away from the source point $x_0$, the wavefront set of the distorted plane wave $u$ is contained in
\begin{align*}
  \WF\l(u|_{M\b \{x_0\}}\r)&\subseteq \Lambda\b\partial\Lambda \\
  &=
  \l\{ \l(\gamma_{x_0,\eta}(s),r\dot{\gamma}_{x_0,\eta}(s)^{\flat} \r) \in T^{\ast}M: \eta\in \W_{x_0,\zeta_0,s_0},s\in \R^{+},r\in \R\b0\r\}.
\end{align*}

To analyze the principal symbol $\sigma[u]$, we follow \cite[Section 2]{CLOP} and consider a bicharacteristic $\beta_{x_0,\zeta_0}(s)$ emanating from  $(x_0,\zeta_0)\in \Sigma\cap \Char(P)$. The bicharacteristic $\beta_{x_0,\zeta_0}(s) = (x(s),\zeta(s))$ satisfies the Hamiltonian system 
\begin{equation}
  \begin{aligned}
    \begin{cases}
    \dot{x}^{i} = g^{ij}(x)\zeta_j,\\
    \dot{\zeta}_i = -\frac{1}{2}\partial_{x^i}g^{jk} \zeta_j\zeta_k,
 \nonumber
    \end{cases}
  \end{aligned}
\end{equation}
with initial data $x(0)=x_0,\zeta(0)=\zeta_0$. 

This trajectory $\beta_{x_0,\zeta_0}(s)$ corresponds to the null geodesic $\gamma_{x_0,\zeta_0}(s)$ via the relation
$$
\beta_{x_0,\zeta_0}(s) = \l(\gamma_{x_0,\zeta_0}(s),\dot\gamma^{\flat}_{x_0,\zeta_0}(s)\r).
$$

We first trivialize the Keller-Maslov line bundle over $\Lambda$ in a conic neighborhood of $\beta_{x_0,\zeta_0}$ before the first cut point. Fix a point $y_0 = \gamma_{x_0,\zeta_0}(\mathbf{t}_0)$ with $\mathbf{t}_0 \in (0,\rho(x_0,\zeta^{\sharp}_0))$. Choose a finite covering of the compact subset $\beta_{x_0,\zeta_0}([0,\mathbf{t}_0])$ of $T^{\ast}M\b 0$. as
$$
  \beta_{x_0,\zeta_0}([0,\mathbf{t}_0]) \subseteq \bigcup_{j=0}^{l} V_{j},
$$
where $V_{0},V_1,\cdots,V_l$ satisfies
\begin{itemize}
 \item $V_j$ is an open conic set in $T^{\ast}M \b 0$ for $j=0,1,\cdots, l$.
 \item $(x_0,\zeta_0)\in V_0$ and $(x_0,\zeta_0)\notin V_k$ for $k=1,\dots,l$. 
 \item $V_k \cap \Lambda$ is a conormal bundle for $k=1,\dots,l$.
\end{itemize}
The last condition is guaranteed by Lemma \ref{lemma: locally conormal distribution}.

The transition functions of Keller-Maslov line bundles in small enough conic neighorhoods are constants, which are powers of $i$,  by \cite[Theorem 3.2.18]{H}, see also the geometric interpretation of the principal symbol in \cite[pp. 14-15]{H4}. Then, the Keller-Maslov line bundle over $V_0\cap \Lambda(x_0,\zeta_0,s_0)$ can be trivialized by $\imath^{m_0}$ for some $m_0\in \Z$. Moreover, the Keller-Maslov line bundle over $V_k\cap \Lambda$ for $k=1,\dots,l$ is trivial by \cite[Theorem 3.3.4]{H}. Then, the Keller-Maslov line bundle over $\cup_{j=0}^{l} V_j$ can be trivialized by $\imath^{m_0}$, which can be neglected without loss of generality.

Fix a strictly positive half-density $\omega$ homogeneous of degree $1/2$ on $\Lambda$. For the existence of such a density, see \cite[Section 2.5]{CLOP}. The divergence of the Hamiltonian vector field $H_P$ with respect to $\omega$ is defined as 
$$
\div_{\omega}H_P := \omega^{-1}\mathscr{L}_{H_P}\omega.
$$
Define a volume factor along the bicharacteristic $\beta(s)$ by 
\begin{equation}\label{eqn: the factor of divergence of Hamilton flow}
  \rho(s) = \int_{0}^{s} \div_{\omega}H_{P }(\beta(s))ds.
\end{equation}
Then, as shown in \cite[(36)]{CLOP}, the principal symbol $\sigma[u]$ of the solution $u$ of \eqref{eqn: linear wave equation with a connection on half-density} with $f\in I^{\mu+3/2}(M;\Sigma;E\otimes \Omega^{1/2})$ satisfies the parallel transport identity 
\begin{equation}\label{eqn: parallel transport of the principal symbol}
 e^{\rho(s)}(\omega^{-1}\sigma[u])(\beta(s)) =  \mathbf{P}^{A}_{\gamma_{x,\xi}([0,s])}(\omega^{-1}\sigma[u])(x,\xi).
\end{equation}
Moreover, the homogeneity of $\sigma[u]$ under rescaling is given in \cite[Proposition 1]{CLOP} as follows, provided that $\sigma[f]$ is positively homogeneous of degree $\mu+5/2$. 
\begin{equation}\label{eqn: homogeneity of the principal symbol}
e^{\rho(s)}(\omega^{-1}\sigma[u])(\beta_{\lambda}(s)) =  \lambda^{\mu+1/2}\mathbf{P}^{A}_{\gamma_{x,\xi}[0,s]}(\omega^{-1}\sigma[u])(x,\xi),\quad \lambda>0,
\end{equation}
where $\beta_{\lambda}(s)= (\gamma(\lambda s),\lambda\dot\gamma^{\flat}(\lambda s))$ is the bicharacteristic emanating from $(x,\lambda\xi)$.

\section{Nonlinear interactions}\label{sec: nonlinear wave interactions}
 In this section, we prove that the source-to-solution map determines the broken light-ray transform uniquely, see Definition \ref{def: broken light ray transform}. 
 \begin{theorem}\label{thm: from source-to-solution map to broken light-ray transforms}
  Let $A$ and $B$ be two Hermitian connections and $A=B$ on $\rotom$. Suppose that the source-to-solution maps coincide, i.e., $L_{g,A} = L_{g,B}$. Then, for all $y\in \D$, $(v,w)\in \E(y)$, and parameters $s^{\prime}\in (0,\rho(y,v))$ and $s^{\prime\prime}\in (0,\rho(y,w))$ such that
  $$
\gamma_{y,v}(s^{\prime})\in \rotom,\quad\text{and}\quad\gamma_{y,w}(s^{\prime\prime})\in \rotom,
  $$
  and
  $$
  (x,\xi) := \l(\gamma_{y,v}(s^{\prime}),-\dot{\gamma}_{y,v}^{\flat}(s^{\prime})\r),
  $$
  we have
  $$
  S^{A}_{\gamma_{x,\xi}([0,s^{\prime}])\gamma_{y,w}([0,s^{\prime\prime}])}=S^{B}_{\gamma_{x,\xi}([0,s^{\prime}])\gamma_{y,w}([0,s^{\prime\prime}])}.
  $$  
 \end{theorem}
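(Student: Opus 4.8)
The plan is to realize the broken light-ray transform as the leading-order contribution of a carefully designed fourth-order (or, more precisely, threefold) interaction of distorted plane waves, and to read it off the source-to-solution map by the higher-order linearization method of Kurylev--Lassas--Uhlmann. Since the nonlinearity here is cubic, $|u|^2 u$, the relevant multilinear term in the Taylor expansion of $L_{g,A}(\epsilon_1 f_1+\epsilon_2 f_2+\epsilon_3 f_3)$ is the coefficient of $\epsilon_1\epsilon_2\epsilon_3$, which involves three incoming distorted plane waves $u_1,u_2,u_3$ solving the linearized equation $Pu_j=f_j$ plus complex conjugates (because of $|u|^2u = \langle u,u\rangle_E u$), and the ``output'' wave is the solution of $Pw = -(\text{sum of products }u_iu_j\overline{u_k})$ read at $\rotom$. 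I would choose the sources $f_1,f_2,f_3$ conormal to small pieces of light cones so that $u_1,u_2,u_3$ are distorted plane waves (Lemma \ref{lemma: locally conormal distribution}) meeting at the single interaction point $y\in\D$, with one of them, say $u_3$, carrying the past-pointing direction and the other two carrying future-pointing directions, arranged so that the three null covectors at $y$ are linearly independent (a triple interaction producing a new singularity propagating along a null geodesic through $y$ that meets $\rotom$ at a point $x$, i.e. the one encoded by $\xi=-\dot\gamma_{y,v}^{\flat}(s')$).

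The core computation is the propagation of principal symbols. By Theorem \ref{thm: paramatrix of the wave equations} and the parallel transport identity \eqref{eqn: parallel transport of the principal symbol}, each incoming symbol $\sigma[u_j]$ is transported by $\mathbf{P}^A$ from $\rotom$ to $y$ along its light ray; at $y$ the product $u_iu_j\overline{u_k}$ creates a conormal source for the new wave whose symbol is an (essentially algebraic, up to a universal constant coming from the stationary-phase / transition map $\mathscr R$) expression in the incoming symbols; and then Theorem \ref{thm: paramatrix of the wave equations} transports that symbol along the outgoing null geodesic $\gamma_{y,w}$ to the point $z\in\rotom$ where it is observed via $L_{g,A}$. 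Matching the two endpoints $x$ and $z$ in $\rotom$ (where $A=B$ and where, by the group property \eqref{eqn: patching property of the parallel transport map}, the parallel transports from the source points to $x$ and from $y$ to $z$ relative to $A$ and $B$ agree once we also run them back to $\rotom$), the hypothesis $L_{g,A}=L_{g,B}$ forces the composition $\mathbf{P}^A_{\gamma_{y,w}([0,s''])}\circ\mathbf{P}^A_{\gamma_{x,\xi}([0,s'])}$ to equal its $B$-counterpart; this is exactly the statement $S^A=S^B$ of Definition \ref{def: broken light ray transform}. The freedom to choose $s',s''$ and the directions $(v,w)\in\mathcal E(y)$ arbitrarily comes from the freedom to move the source points in $\rotom$, which is possible precisely by Lemma \ref{lemma: observation points}.

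Several technical points need care but are by now standard. One must justify that the singularity produced at $y$ by the triple interaction is nontangential to the cones of the individual waves and that its symbol is \emph{nonzero} and elliptic along the outgoing ray — this uses the linear independence of the three null directions and an explicit (nonvanishing) interaction coefficient, and it is where one must make sure the cubic term $\langle u,u\rangle_E u$ does contribute with the conjugation on the right leg so that the orientation (two future, one past, giving one new future covector $w$ at $y$ and the corresponding past covector $\xi=-\dot\gamma_{y,v}^\flat(s')$ at $x$) is the one demanded by the theorem. One must also control all the \emph{other} singularities produced in the interaction region (lower-order ones, ones propagating in directions not reaching $\rotom$, and the reflected/earlier pieces) by choosing the conic neighborhoods $s_0$ of the source directions small and by microlocalizing near the relevant covector over $z$, so that the component of $w|_\rotom$ with wavefront at that covector is purely the one we computed; the cut-point issues (the reason $s'<\rho(y,v)$, $s''<\rho(y,w)$) are handled by Lemma \ref{lemma: only causal curve} and the lower semicontinuity of $\rho$, exactly as in the proof of Lemma \ref{lemma: observation points}.

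**The main obstacle** I expect is the bookkeeping of the interaction of three distorted plane waves on a general globally hyperbolic Lorentzian manifold — specifically, ensuring that one can isolate from $L_{g,A}$ the clean piece corresponding to a \emph{single} triple interaction at $y$ with the three incoming cones meeting transversally there and no spurious earlier (conjugate-point or cut-point) intersections of the cones polluting the symbol observed at $z$. On Minkowski space (as in \cite{CLOP}) the cones are flat and global transversality is easy; here one must work locally near $y$, exploit that the source neighborhoods can be taken arbitrarily small, invoke Lemma \ref{lemma: only causal curve} to guarantee the incoming and outgoing geodesic segments are the unique causal curves (hence no extra intersections before the cut points), and then carry the symbol computation through the paired-Lagrangian calculus of \cite{MU,GU81} and the transport equation \eqref{eqn: transport equation for principal symbol}. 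Once the symbol is extracted, equating the $A$- and $B$-observations and cancelling the common parallel transports in $\rotom$ is a short algebraic step.
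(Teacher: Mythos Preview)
Your high-level strategy is exactly the paper's: threefold linearization of the cubic nonlinearity, distorted plane waves as probes, propagation of principal symbols by the parallel transport identity \eqref{eqn: parallel transport of the principal symbol}, and reading off the interaction symbol at $z\in\rotom$. You also correctly flag the cut-point geometry as the main obstacle, and the paper indeed devotes a separate lemma (Lemma~\ref{lemma: propagation of regularities}) to showing that the outgoing bicharacteristic through $(z,\zeta)$ meets $\bigcup_j K(x_{(j)},\xi_{(j)},s_0)$ only at $y$.

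There is, however, a genuine gap in your outline. A triple interaction requires \emph{three} linearly independent null covectors at $y$, hence three distinct incoming geodesics from three source points $x_{(1)},x_{(2)},x_{(3)}\in\rotom$. The observed symbol at $(z,\zeta)$ is therefore, up to a nonvanishing scalar independent of the connection,
\[
\mathbf{P}^{A}_{\gamma_{y,w}([0,s''])}\sum_{\tau\in S(3)}\mathrm{Re}\bigl\langle \mathbf{P}^{A}_{\gamma_{x_{(\tau(1))},\xi_{(\tau(1))}}}\tilde c_{(\tau(1))},\,\mathbf{P}^{A}_{\gamma_{x_{(\tau(2))},\xi_{(\tau(2))}}}\tilde c_{(\tau(2))}\bigr\rangle\,\mathbf{P}^{A}_{\gamma_{x_{(\tau(3))},\xi_{(\tau(3))}}}\tilde c_{(\tau(3))},
\]
which involves \emph{three different} incoming parallel transports, not one. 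Equating this with the $B$-expression does not by itself yield $S^A=S^B$ for a single broken ray. The paper's key device, which your proposal omits, is to choose the three incoming directions $w_{(1)},w_{(2)},w_{(3)}$ as perturbations (parametrized by $r>0$) of the single direction $v$, and then pass to the limit $r\to 0+$. In that limit the three incoming transports coalesce to $\mathbf{P}^{A}_{\gamma_{x,\xi}([0,s'])}$, and unitarity gives $\bigl|\mathbf{P}^{A}_{\gamma_{x,\xi}}\tilde c\bigr|^2=1$, so the observed quantity collapses to a nonzero scalar times $\mathbf{P}^{A}_{\gamma_{y,w}([0,s''])}\mathbf{P}^{A}_{\gamma_{x,\xi}([0,s'])}\tilde c$; varying $\tilde c$ over the unit sphere then gives $S^A=S^B$. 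Without this limiting argument (or some substitute polarization scheme, which you do not supply), the step ``forces the composition $\mathbf{P}^A_{\gamma_{y,w}}\circ\mathbf{P}^A_{\gamma_{x,\xi}}$ to equal its $B$-counterpart'' is unjustified. Relatedly, your description of the direction setup (``two future, one past'' with the outgoing singularity reaching $x$) is internally inconsistent with the causality of the forward problem; the paper sends all three incoming waves from the causal past of $y$ and observes the outgoing wave at $z$ in the future.
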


  \subsection{Three-fold linearization}
   We only need to prove the case $y\in \D\b\rotom$. The case $y\in\rotom$ is trivial since $A=B$ on $\rotom$. 
   
   By Lemma \ref{lemma: observation points}, there exist $(v,w)\in \E(y)$ and parameters $s^{\prime}\in (0,\rho(y,v))$ and $s^{\prime\prime}\in (0,\rho(y,w))$ such that 
   $$
   \gamma_{y,v}(s^{\prime})\in \rotom,\quad \gamma_{y,w}(s^{\prime\prime})\in \rotom.
   $$

   Choose a local normal coordinate near $y$ such that
   $$
      g|_{y} = -(dx^{0})^{2}+ (dx^{1})^{2}+\cdots+ (dx^{n})^2.
   $$
  By \cite[Lemma 1]{CLOP} and \cite[pp. 388-389]{FO}, we can rotate the coordinates such that the vectors $v$ and $w$ can be written in the following form, for some $\theta\in(0,2\pi)$,
   $$
     v =  (-1, -1, \underbrace{0, \ldots, 0}_{n-1 \text{ times}})\in L_{y}^{-}M,\quad\text{and}\quad w = (1,\cos\theta,\sin\theta,\underbrace{0, \ldots, 0}_{n-2 \text{ times}})\in L_{y}^{+}M.
   $$

We need to choose three linearly independent vectors $w_{(1)},w_{(2)},w_{(3)} \in L_{y}^{-}M$ to construct three causally independent source points. First, let
$$
 w_{(1)} := w.
$$
Next, we perturb the geodesic direction $v_{(1)}$. For sufficiently small $r>0$, define 
   \begin{equation}\label{eqn: perturbation of the vector}
     w_{(2)} = (-1,\sqrt{1-r^2},r,\underbrace{0, \ldots, 0}_{n-2 \text{ times}}),\quad w_{(3)} = (-1,\sqrt{1-r^2},-r,\underbrace{0, \ldots, 0}_{n-2 \text{ times}})\in L_{y}^{-}M.
   \end{equation}
By the lower semi-continuity of the null cut function $\rho(y,\cdot)$ restricted at $y$, there exists a common parameter $s^{\prime}$ such that
$$
x_{(j)}:= \gamma_{y,w_{(j)}}(s^{\prime})\in \rotom,\quad\text{and}\quad s^{\prime}\in (0,\rho(y,w_{(j)})),\quad j=1,2,3.
$$ 
Define
  $$
  \xi_{(j)} = -\dot{\gamma}_{y,w_{(j)}}(s^{\prime}) \in L^{+}_{x_{(j)}}M.
  $$
as the initial vector at $x_{(j)}$ that generates a geodesic joining $x_{(j)}$ to $y$. By choosing $r>0$ sufficiently small, the points $x_{(1)},x_{(2)}$ and $x_{(3)}$ lie on distinct null geodesics and are causally independent, i.e.,
  \begin{equation}\label{eqn: points are causally independent}
    x_{(j)}\notin J^{+}(x_{(k)}),\quad 1\le j\neq k\le3.
  \end{equation}
  Then, the parameter $s^{\prime}$ satisfy 
  $$
    s^{\prime}\in (0,\rho(y,w_{(j)})),\quad\text{for }j=1,2,3. 
  $$
 Moreover, the corresponding reparameterized geodesics satisfy 
  \begin{equation}\label{eqn: reparameter of the geodesics}
   y = \gamma_{x_{(j)},\xi_{(j)}}(s^{\prime}),\quad j=1,2,3.
  \end{equation}

  Define covectors in $L_{y}^{\ast}M$ by
   \begin{equation}\label{eqn: covectors at y}
   \eta := w^{\flat},\quad \eta_{(1)} := -w_{(1)}^{\flat},\quad \eta_{(2)} := w_{(2)}^{\flat},\quad \eta_{(3)} := w_{(3)}^{\flat}.
   \end{equation}
   Then, by \cite[Lemma 1]{CLOP}, there exist $\kappa_{(1)},\kappa_{(2)},\kappa_{(3)}>0$, depending only on $r$ and converging when $r\to 0+$, such that the following linear relation holds.
  \begin{equation}\label{eqn: linear relation}
    \eta = r^{-2}\kappa_{(1)}\eta_{(1)}+ r^{-2}\kappa_{(2)}\eta_{(2)}+ r^{-2}\kappa_{(3)}\eta_{(3)}.
  \end{equation}
 Define the corresponding covectors at $L_{x_{(j)}}M$ by
 \begin{equation}\label{eqn: corresponding covectors}
  \nu_{(1)} := -\xi_{(1)}^{\flat}\in L^{-,\ast}_{x_{(1)}}M, \quad\nu_{(2)} :=\xi_{(2)}^{\flat} \in L^{+,\ast}_{x_{(3)}}M,\quad \nu_{(3)} :=\xi_{(3)}^{\flat} \in L^{+,\ast}_{x_{(3)}}M.
 \end{equation}

Let $\beta_{\out}$ denotes the bicharacertistics emanating from $(y,\eta)$. Then, 
\begin{equation}\label{eqn: bicharacteristic}
  \beta_{\out}(s) = \l(\gamma_{y,w}(s),\dot{\gamma}^{\flat}_{y,w}(s) \r),\quad\beta_{\out}(0) = (y,\eta), \quad \text{and}\quad (z,\zeta) := \beta_{\out}(s^{\prime\prime}).
\end{equation}

 Fix a small parameter $s_0>0$ and $\mu \le -7$. Define the Lagrangian submanifolds corresponding to the source by 
 $$
\Sigma_{(j)} := \Sigma(x_{(j)},\nu_{(j)},s_0),\quad j=1,2,3.
 $$
 We construct point sources of the form
 \begin{equation}\label{eqn: construction of sources}
 f_{(j)} = c_{(j)} \chi_{(j)}\delta_{x_{(j)}},\quad j=1,2,3,
 \end{equation}
 where
  \begin{itemize}
     \item Each $c_{(j)}\in \C^{m}\b0$ is a fixed nonzero vector.
     \item $\chi_{(j)}$ is a pseudodifferential operator whose wavefront set is contained in $\Sigma_{(j)}$ for $j=1,2,3$ and whose Schwartz kernel is scalar valued.
     \item The principal symbol $\sigma[\chi_{(j)}]$ is positively homogeneous of degree $\mu+5/2$.
     \item  $\sigma[\chi_{(j)}] \neq 0$ in a conic neighborhood of $(x_{(j)},\nu_{(j)})$ for $j=1,2,3$.
     \item The sources $f_{(j)}$ are causally independent, i.e.,
      \begin{equation}\label{eqn: sources are causally independent}
     \supp(f_{(j)})\cap J^{+}(\supp(f_{(k)})) = \varnothing,\quad 1\le j \neq k\le 3.
     \end{equation}
  \end{itemize}

  Hence, each source has the following microlocal structure 
  \begin{equation}\label{eqn: three-parameter sources as Lagrangian distributions}
    f_{(j)} \in  I^{\mu+3/2}(\rotom;\Sigma_{(j)};E).
  \end{equation}
 By \cite[Theorem 25.1.4]{H4}, we have the Sobolev regularity for $\mu \le -7$, \begin{equation}\label{eqn: Sobolev regularity}
  f_{(j)}\in H^{4}(\rotom;E).
   \end{equation}

The principal symbol of $f_{(j)}$ at $(x_{(j)},\nu_{(j)})$ is
  \begin{equation}\label{eqn: principal symbol of the three-parameter sources}
 \sigma\l[f_{(j)}\r]\l(x_{(j)},\nu_{(j)} \r) = c_{(j)}\sigma\l[\chi_{(j)}\r]\l(x_{(j)},\nu_{(j)}\r)
  \end{equation}

Let the source of the nonlinear wave equation \eqref{eqn: connection wave equation with a source} be given by
\begin{equation}\label{eqn: three-parameter source of the semilinear wave equation}
  f = \e_{(1)}f_{(1)} +\e_{(2)}f_{(2)} +\e_{(3)}f_{(3)}.
\end{equation}
The solution $v(\epsilon)$ of \eqref{eqn: connection wave equation with a source}, corresponding to the source \eqref{eqn: three-parameter source of the semilinear wave equation}, depends smoothly on the parameter $\epsilon = (\e_{(1)},\e_{(2)},\e_{(3)})$, see \cite[Section 6.5]{Rauch-book}. 

Define half-density valued function, see \eqref{eqn: correspond to half-density valued function}, by
$$
u(\epsilon) := v(\epsilon)|g|^{1/4}
$$ 
which solves the conjugated equation \eqref{eqn: connection wave equation of half-density valued} with the source $|g|^{1/4}f$.

For $j=1,2,3$, define the first order derivative at $\epsilon=0$ as
$$
u_{(j)} = \partial_{\e_{(j)}}u(\epsilon)|_{\epsilon = 0}.
$$
Differentiating \eqref{eqn: connection wave equation of half-density valued} with respect to $\e_{(j)}$ at $\epsilon= 0$ yields the one-fold linearized equation
\begin{equation}\label{eqn: one-fold linearized equation}
 \begin{aligned}
 \begin{cases}
  Pu_{(j)} = |g|^{1/4}f_{(j)},\quad &\text{on } (0,T)\times N,\\
  u_{(j)}(0,x^{\prime})=0,\partial_{t}u_{(j)}(0,x^{\prime})=0 \quad  &x^{\prime}\in N.
  \end{cases}
  \end{aligned}
\end{equation}

Moreover, define the three-fold linearized wave $u_{(123)}  = \partial_{\e_{(1)}}\partial_{\e_{(2)}}\partial_{\e_{(3)}}u(\epsilon)|_{\epsilon = 0}$ solving
\begin{equation}\label{eqn: three-fold linearized equation}
\begin{aligned}
 \begin{cases}
   Pu_{(123)}  = f_{(123)},\quad &\text{on }(0,T)\times N,\\
   u_{(123)}(0,x^{\prime})=0, \partial_{t}u_{(123)}(0,x^{\prime})=0,\quad &x^{\prime}\in N.
   \end{cases}
   \end{aligned}
\end{equation}
where
$$
f_{(123)}:=\sum_{\tau\in S(3)}\frac{1}{2} \operatorname{Re}\l(\l\langle |g|^{-1/4}u_{(\tau(1))},|g|^{-1/4}u_{(\tau(2))}\r\rangle\r)u_{(\tau(3))}.
$$
Moreover, $S(3)$ denotes the permutation group on $\{1,2,3\}$ and  $\langle\cdot,\cdot\rangle$ is the bundle metric on $E$. The real part is taken in $\C$.

Let $\Lambda_{(j)}$ denote the future flowout of $\Sigma_{(j)} \cap \Char(P)$ under the Hamiltonian flow. By the microlocal structure of $f_{(j)}$ \eqref{eqn: three-parameter sources as Lagrangian distributions}, the linearized equation \eqref{eqn: one-fold linearized equation}, and Theorem \ref{thm: paramatrix of the wave equations}, the solution $u_{(j)}$ is an intersecting paired Lagrangian distribution, i.e.,
$$
  u_{(j)} \in I^{\mu}(M;\Sigma_{(j)},\Lambda_{(j)};E\otimes\Omega^{1/2}).
$$

\subsection{Singularity analysis}
In this section, we analyze the singularities of the third order linearized wave $u_{(123)}$, which solves the equation \eqref{eqn: three-fold linearized equation}.

By Lemma \ref{lemma: locally conormal distribution}, see also the reparametrization of the geodesics, each first order linearzed wave $u_{(j)}$ is locally a conormal distribution near $y$. That is, there exists a neighborhood $V_0\subseteq M$ of $y$ such that
\begin{equation}\label{eqn: u_j are locally conormal distributions}
  u_{(j)}|_{V_0} \in I^{\mu}(N^{\ast}K_{(j)};E\otimes\Omega^{1/2}),
\end{equation}
where each $K_{(j)}\cap V_0$ is a smooth submanifold of codimension $1$.

Moreover, for the covectors $\eta_{(j)}\in L_{y}^{\ast}M$ defined in \eqref{eqn: covectors at y}, we have 
\begin{equation}\label{eqn: wavefront set of the one-fold linearization near y}
 \l(y,\eta_{(j)} \r) \in N^{\ast}K_{(j)},\quad 1\le j\le 3.
\end{equation}

Since the hypersurfaces $K_{(j)}\cap V_0$ intersects transversally near $y$, their conormal bundles satisfy
\begin{equation}\label{eqn: conormal bundle of transversal intersection I}
  N^{\ast}_{x}\l(K_{(i)}\cap K_{(j)} \r) = N^{\ast}_{x}K_{(i)}\oplus N^{\ast}_{x}K_{(j)}, \quad x\in V_0\cap K_{(i)} \cap K_{(j)},\quad 1\le i<j\le 3.
\end{equation}
Moreover, for any permutation $\tau\in S(3)$, we have that $K_{(\tau(1))}\cap K_{(\tau(2))}$ intersects $K_{(\tau(3))}$ transversally. Then, 
\begin{equation}\label{eqn: conormal bundle of transversal intersection II}
 N^{\ast}_{x}\l(K_{(1)}\cap K_{(2)}\cap K_{(3)} \r) = \oplus_{j=1}^{3}N^{\ast}_{x}K_{(j)},\quad x\in V_0\cap \l(\cap_{j=1}^{3} K_{(j)}\r).
\end{equation}

Combining the transversal intersection properties in \eqref{eqn: conormal bundle of transversal intersection I} and \eqref{eqn: conormal bundle of transversal intersection II}, together with the wavefront set calculus for pointwise products of distributions \cite[Theorem 8.2.10]{H1}, we obtain that
$$
\WF\l(f_{(123)}|_{V_0} \r)\subseteq \bigcup_{j=1}^{3} N^{\ast}K_{(j)} \cup \bigcup_{1\le j<k\le 3} N^{\ast}\l( K_{(j)}\cap K_{(k)}\r) \cup  N^{\ast}\l( K_{(1)}\cap K_{(2)}\cap K_{(3)}\r).
$$
Moreover, by the linear relation in \eqref{eqn: linear relation}, the wavefront set of $u_{(j)}$ in \eqref{eqn: wavefront set of the one-fold linearization near y}, and the direct sum decomposition \eqref{eqn: conormal bundle of transversal intersection II}, we conclude that
\begin{equation}\label{eqn: conormal bundle associated with (y,eta)}
  (y,\eta) \in N^{\ast}\l(K_{(1)}\cap K_{(2)}\cap K_{(3)}\r).
\end{equation}

\begin{lemma}\label{lemma: property of (y,eta)}
  For all $1\le j< k\le 3$, we have
  $$
  (y,\eta)\notin  N^{\ast}K_{(j)},\quad\text{and}\quad (y,\eta)\notin N^{\ast}\l(K_{(j)}\cap K_{(k)} \r).
  $$
\end{lemma}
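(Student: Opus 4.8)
The plan is to reduce the statement to a purely linear-algebraic fact in the single cotangent space $T_y^{\ast}M$, using three ingredients that are already available: (i) the fiber of $N^{\ast}K_{(j)}$ over $y$ is a line spanned by $\eta_{(j)}$; (ii) the covectors $\eta_{(1)},\eta_{(2)},\eta_{(3)}$ are linearly independent; and (iii) the relation $\eta = r^{-2}\kappa_{(1)}\eta_{(1)} + r^{-2}\kappa_{(2)}\eta_{(2)} + r^{-2}\kappa_{(3)}\eta_{(3)}$ from \eqref{eqn: linear relation}, in which all three coefficients $\kappa_{(j)}$ are strictly positive.

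First I would record (i): by Lemma~\ref{lemma: locally conormal distribution}, together with the reparametrization \eqref{eqn: reparameter of the geodesics}, each $K_{(j)}\cap V_0$ is a smooth hypersurface, so $N^{\ast}_y(K_{(j)}\cap V_0)$ is a line in $T^{\ast}_yM\setminus 0$; since $(y,\eta_{(j)})\in N^{\ast}K_{(j)}$ by \eqref{eqn: wavefront set of the one-fold linearization near y} and $\eta_{(j)}\neq 0$, this line is exactly $\R\eta_{(j)}$. For (ii), the musical isomorphism $\flat\colon T_yM\to T^{\ast}_yM$ is linear and bijective, and $w_{(1)},w_{(2)},w_{(3)}$ were chosen linearly independent, so $w_{(1)}^{\flat},w_{(2)}^{\flat},w_{(3)}^{\flat}$ are linearly independent; hence, by \eqref{eqn: covectors at y}, so are $\eta_{(1)}=-w_{(1)}^{\flat}$, $\eta_{(2)}=w_{(2)}^{\flat}$, $\eta_{(3)}=w_{(3)}^{\flat}$.

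Then I would argue by contradiction. Suppose $(y,\eta)\in N^{\ast}K_{(j)}$ for some $j$. By (i) this forces $\eta=c\,\eta_{(j)}$ for a scalar $c$; subtracting this from \eqref{eqn: linear relation} gives a vanishing linear combination of $\eta_{(1)},\eta_{(2)},\eta_{(3)}$ whose coefficient on each $\eta_{(i)}$ with $i\neq j$ is $r^{-2}\kappa_{(i)}$, so by (ii) we get $r^{-2}\kappa_{(i)}=0$ for some $i\neq j$, contradicting $\kappa_{(i)}>0$ and $r>0$. For the second assertion, by the transversality of $K_{(j)}$ and $K_{(k)}$ at $y$ recorded in \eqref{eqn: conormal bundle of transversal intersection I}, the fiber $N^{\ast}_y(K_{(j)}\cap K_{(k)})$ equals $\R\eta_{(j)}\oplus\R\eta_{(k)}$; so $(y,\eta)\in N^{\ast}(K_{(j)}\cap K_{(k)})$ would mean $\eta\in\spn\{\eta_{(j)},\eta_{(k)}\}$, and comparing with \eqref{eqn: linear relation} and using (ii) again, the coefficient $r^{-2}\kappa_{(i)}$ attached to the remaining index $i\notin\{j,k\}$ must vanish — again impossible.

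I do not expect a genuine obstacle here: the whole content is that the "outgoing" covector $\eta$ really sees all three incoming hypersurfaces, which is precisely the positivity of all $\kappa_{(j)}$. The only point deserving a word of care is that the argument needs nothing quantitative in $r$ beyond $r>0$, so it is legitimate to apply it for the same small $r$ already fixed to secure the causal independence \eqref{eqn: points are causally independent} and the transversal-intersection identities \eqref{eqn: conormal bundle of transversal intersection I}–\eqref{eqn: conormal bundle of transversal intersection II}; and that the assertion $(y,\eta)\notin N^{\ast}K_{(j)}$ should be read for every $j\in\{1,2,3\}$, which the contradiction argument above delivers uniformly.
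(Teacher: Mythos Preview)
Your proof is correct and takes a more direct route than the paper's. You work entirely in the single fibre $T_y^{\ast}M$: since each $K_{(j)}\cap V_0$ is a hypersurface, the conormal fibre $N_y^{\ast}K_{(j)}$ is the line $\R\eta_{(j)}$, and transversality gives $N_y^{\ast}(K_{(j)}\cap K_{(k)})=\R\eta_{(j)}\oplus\R\eta_{(k)}$; then the relation \eqref{eqn: linear relation} with all $\kappa_{(i)}>0$, together with the linear independence of the $\eta_{(i)}$, immediately rules out $\eta$ lying in any proper coordinate subspace spanned by the $\eta_{(i)}$.

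The paper instead uses Lorentzian geometry. For the first assertion it argues through the inclusion $N^{\ast}K_{(j)}\subseteq\Lambda_{(j)}$: if $(y,\eta)\in N^{\ast}K_{(j)}$ then $\eta$ would be the tangent covector at $y$ of a null geodesic from $x_{(j)}$, and since this geodesic is distinct from $\gamma_{x_{(j)},\xi_{(j)}}$ one obtains two causal curves from $x_{(j)}$ to $y$, contradicting Lemma~\ref{lemma: only causal curve}. For the second assertion it invokes the fact that a nontrivial linear combination of two linearly independent null covectors is never null, while $\eta=w^{\flat}$ is null. Your argument is more elementary and uses nothing beyond the setup already recorded in \eqref{eqn: wavefront set of the one-fold linearization near y}--\eqref{eqn: conormal bundle of transversal intersection II}; the paper's second step has the minor feature that it does not need the positivity of the specific coefficient $\kappa_{(i)}$, only the nullity of $\eta$, but both approaches are short and valid.
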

\begin{proof}
 Assume that $(y,\eta)\in N^{\ast}K_{(j)}$ for some $j=1,2,3$. Then, there exists a light-like geodesic $\gamma_{x_{(j)},\nu}$ joining $x_{(j)}$ to $y$. Then, we can write
 $$
   y= \gamma_{x_{(j)},\nu}(\tilde{s}),\quad\text{and}\quad\eta =\dot{\gamma}_{x_{(j)},\nu}^{\flat}(\tilde{s})\in N^{\ast}_{y}K_{(j)}\b0.
 $$
 
 Recall that $\gamma_{x_{(j)},\xi_{(j)}}$ is a geodesic from $x_{(j)}$ to $y$ such that $\gamma_{x_{(j)},\xi_{(j)}}(s_{j}^{\prime}) = y$ and $\dot{\gamma}^{\flat}_{x_{(j)},\xi_{(j)}}(s_{j}^{\prime}) = -\eta_{(j)}\in N^{\ast}_{y}K_{(j)}\b 0$. Since $\eta\neq \eta_{(j)}$ by the linear relation \eqref{eqn: linear relation}, we conclude that $\gamma_{x_{(j)},\nu}$ and $\gamma_{x_{(j)},\xi_{(j)}}$ are two light-like geodesics joining $x_{(j)}$ to $y$, which contradicts the fact that there are no cut points along $\gamma_{x_{(j)},\xi_{(j)}}$ from $x_{(j)}$ to $y$ by Lemma \ref{lemma: only causal curve}. Then, $(y,\eta)\notin N^{\ast}K_{(j)}$ for all $1\le j\le 3$.

Moreover, the sum of two linearly independent light-like covectors is not light-like by \cite[Lemma 27, pp. 141]{O}. By the direct sum \eqref{eqn: conormal bundle of transversal intersection I}, we have
$$
 (y,\eta) \notin N^{\ast}\l(K_{(j)}\cap K_{(k)} \r)\b\l(N^{\ast}K_{(j)}\cup N^{\ast}K_{(k)} \r).
$$
Then, we have finished the proof.
\end{proof}

By Lemma \ref{lemma: property of (y,eta)} and \eqref{eqn: conormal bundle associated with (y,eta)}, we can choose a pseudodifferential operator $\chi$ whose wavefront set is  contained in a small conic neighborhood of $(y,\eta)$, and satisfies 
\begin{equation}\label{eqn: property of the microlocal cut-off}
\sigma[\chi](y,\eta) =1,\quad\WF(\chi) \cap \l(\bigcup_{j=1}^{3} N^{\ast}K_{(j)} \cup \bigcup_{1\le k<l\le 3}N^{\ast}\l(K_{(k)} \cap K_{(l)}\r)\r) = \varnothing.
\end{equation}
Then we have
$$
\WF\l(\chi f_{(123)}\r)  \subseteq N^{\ast}\l( K_{(1)}\cap K_{(2)}\cap K_{(3)}\r).
$$

We now show that $(\id-\chi)f_{(123)}$ does not propagate singularities to $(z,\zeta)$.

By \cite[Proposition 1.3.1]{D}, the singular support of a distribution coincides with the projection of its wavefront set onto the base manifold. Since 
$$
 \singsupp\l( u_{(j)}\r)\b\{x_{(j)}\} \subseteq   K\l(x_{(j)},\xi_{(j)},s_0 \r),\quad j=1,2,3,
$$
where $K(\cdot,\cdot,\cdot)$ is defined in \eqref{eqn: flowout submanifold}. Hence, 
\begin{equation}\label{eqn: singular support of the source}
  \singsupp\l(f_{(123)}\r)\b\bigcup_{j=1}^{3}\{x_{(j)}\} \subseteq \bigcup_{j=1}^{3}  K\l(x_{(j)},\xi_{(j)},s_0 \r).
\end{equation}

Recall the bicharacteristic $\beta_{\out}$ in \eqref{eqn: bicharacteristic}. We reparameterize it as
\begin{equation}\label{eqn: reparameterized bicharacteristic}
  \beta_{z,-\zeta^{\sharp}}(s) = \l(\gamma_{z,-\zeta^{\sharp}}(s),\dot{\gamma}^{\flat}_{z,-\zeta^{\sharp}}(s) \r),\quad s\in [0,\mathcal{T}(z,-\zeta^{\sharp})).
\end{equation}
It suffices to prove the following result.

\begin{lemma}\label{lemma: propagation of regularities}
    There exists $s_0>0$ such that, for all $j=1,2,3$, the flowout $K(x_{(j)},\xi_{(j)},s_0)$ intersects the geodesic segment $\gamma_{z,-\zeta^{\sharp}}[0,\mathcal{T}(z,-\zeta^{\sharp}))$ only at the point $y$.
\end{lemma}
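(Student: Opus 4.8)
The plan is to reduce the claim to elementary causal geometry. When $s_0=0$ the flowout $K(x_{(j)},\xi_{(j)},s_0)$ degenerates to the single null geodesic $\gamma_{x_{(j)},\xi_{(j)}}$, and the uniqueness of causal curves before the first cut point (Lemma~\ref{lemma: only causal curve}) settles that case; the rest is a compactness argument promoting this to small $s_0>0$. Fix $j\in\{1,2,3\}$. From the construction I will use that $\gamma_{z,-\zeta^\sharp}$ is the past-directed null geodesic from $z$, meeting $y$ at parameter $s''$, coinciding on $[0,s'']$ with $\gamma_{y,w}([0,s''])$ traversed in reverse, and on $(s'',\mathcal{T}(z,-\zeta^\sharp))$ equal to the maximal past-extension $\gamma_{y,-w}((0,\mathcal{T}(y,-w)))$; that $x_{(j)}\in J^-(y)$ and $\gamma_{x_{(j)},\xi_{(j)}}|_{[0,s']}$ is the unique causal curve joining $x_{(j)}$ and $y$ (Lemma~\ref{lemma: only causal curve} for $(y,w_{(j)})\in L^-_yM$ and $s'<\rho(y,w_{(j)})$); that $\gamma_{x_{(j)},\xi_{(j)}}$ reaches $y$ with velocity $-w_{(j)}$ and continues as $\gamma_{y,-w_{(j)}}$; that $w_{(j)}$ is not collinear with $w$ (for $j=1$ this is $(v,w)\in\E(y)$, for $j=2,3$ it holds for small $r$), equivalently $g(w_{(j)},w)\neq 0$ since two nonzero null vectors are $g$-orthogonal only if collinear; and that $s'<\rho(x_{(j)},\xi_{(j)})$, since otherwise $y$ is the first cut point of $x_{(j)}$ along $\xi_{(j)}$ and Lemma~\ref{lemma: symmetry of the null cut function} gives $\rho(y,w_{(j)})=s'$, contradicting $s'<\rho(y,w_{(j)})$, so that Lemma~\ref{lemma: locally conormal distribution} applies at $y$.

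First I would rule out intersections on the part of $\gamma_{z,-\zeta^\sharp}$ strictly beyond $y$. If $q=\gamma_{y,-w}(\sigma')\in K(x_{(j)},\xi_{(j)},s_0)$ with $\sigma'>0$, then $q\le y$ (as $-w$ is past-pointing) and $x_{(j)}\le q$ (as $K(x_{(j)},\xi_{(j)},s_0)\subseteq J^+(x_{(j)})$). Concatenating a null geodesic from $x_{(j)}$ to $q$ with the sub-arc of $\gamma_{y,w}$ running from $q$ to $y$, which reaches $y$ with velocity $w$, gives a causal curve from $x_{(j)}$ to $y$; by uniqueness it must be $\gamma_{x_{(j)},\xi_{(j)}}|_{[0,s']}$ up to reparametrization, hence a smooth geodesic reaching $y$ with velocity $-w_{(j)}$, forcing $w\parallel w_{(j)}$, a contradiction. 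So this part misses $K(x_{(j)},\xi_{(j)},s_0)$ for every admissible $s_0$. Next I would handle the part between $z$ and $y$ in the limit $s_0=0$, i.e.\ show $\gamma_{y,w}((0,s''])$ is disjoint from $\Gamma_j:=\gamma_{x_{(j)},\xi_{(j)}}((0,\mathcal{T}(x_{(j)},\xi_{(j)})))$. If $q=\gamma_{y,w}(\sigma)=\gamma_{x_{(j)},\xi_{(j)}}(\tilde\sigma)$ with $\sigma\in(0,s'']$, then $q\ge y$; if $\tilde\sigma\le s'$ then also $q\le y$, so $q=y$ by the absence of closed causal curves, contradicting $\sigma>0$; and if $\tilde\sigma>s'$ then $q$ lies on $\gamma_{y,-w_{(j)}}$, so $\gamma_{y,w}|_{[0,\sigma]}$ and a sub-arc of $\gamma_{y,-w_{(j)}}$ are two distinct null geodesics from $y$ to $q$, impossible by Lemma~\ref{lemma: only causal curve} since $\sigma\le s''<\rho(y,w)$.

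It then remains to pass to small $s_0>0$ for the part between $z$ and $y$. Near $y$: by Lemma~\ref{lemma: locally conormal distribution}, for some small $s_0^\ast>0$ the set $\Sigma^\ast:=K(x_{(j)},\xi_{(j)},s_0^\ast)$ is a smooth hypersurface in a neighborhood $V_0$ of $y$ with conormal at $y$ proportional to $w_{(j)}^\flat$; as $g(w_{(j)},w)\neq0$, the curve $\gamma_{y,w}$ crosses $\Sigma^\ast$ transversally at $y$, so there is a neighborhood $U\subseteq V_0$ of $y$ with: $\gamma_{y,w}(\sigma)\in\Sigma^\ast$ and $\gamma_{y,w}(\sigma)\in U$ together force $\sigma=0$; and since $K(x_{(j)},\xi_{(j)},s_0)\cap V_0\subseteq\Sigma^\ast$ for all $s_0\le s_0^\ast$, the same holds with $K(x_{(j)},\xi_{(j)},s_0)$ in place of $\Sigma^\ast$, uniformly in $s_0\le s_0^\ast$. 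Away from $y$: if the conclusion failed along some $s_0^{(k)}\downarrow0$, there would be $q_k=\gamma_{y,w}(\sigma_k)\in K(x_{(j)},\xi_{(j)},s_0^{(k)})$ with $\sigma_k\in(0,s'']$; by the previous point $q_k\notin U$, so a subsequence satisfies $q_k\to q_\infty=\gamma_{y,w}(\sigma_\infty)$ with $\sigma_\infty\in(0,s'']$. Writing $q_k=\gamma_{x_{(j)},\eta_k}(\tilde\sigma_k)$ with $\eta_k$ converging to a positive multiple of $\xi_{(j)}$, the segments $\gamma_{x_{(j)},\eta_k}|_{[0,\tilde\sigma_k]}$ lie in the compact causal diamond $J^+(x_{(j)})\cap J^-(z)$ (using $q_k\le z$); standard limit-curve arguments in globally hyperbolic spacetimes, i.e.\ the impossibility of imprisoning a causal geodesic in a compact set, then keep the $\tilde\sigma_k$ bounded and force a subsequence to converge to $\gamma_{x_{(j)},\xi_{(j)}}|_{[0,\tilde\sigma_\infty]}$, so $q_\infty\in\Gamma_j$ (it cannot equal $x_{(j)}$, which lies in $J^-(y)$ while $q_\infty$ does not), contradicting the previous paragraph. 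Hence $\gamma_{y,w}((0,s''])\cap K(x_{(j)},\xi_{(j)},s_0)=\varnothing$ for $s_0$ small, and together with the part beyond $y$ and with $y=\gamma_{z,-\zeta^\sharp}(s'')\in K(x_{(j)},\xi_{(j)},s_0)$ this gives $\gamma_{z,-\zeta^\sharp}([0,\mathcal{T}(z,-\zeta^\sharp)))\cap K(x_{(j)},\xi_{(j)},s_0)=\{y\}$; taking $s_0$ small enough to serve all three $j$ finishes the argument.

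The hard part will be this last step. The geodesic segments and the flowouts are noncompact and the affine parameters $\tilde\sigma_k$ are a priori unbounded, since the spacetime need not be geodesically complete; promoting the clean $s_0=0$ statement to a uniform statement for small $s_0>0$ therefore forces a genuine use of global hyperbolicity --- compactness of causal diamonds together with the non-imprisonment of causal geodesics --- whereas everything else reduces to soft causal reasoning built on Lemma~\ref{lemma: only causal curve} and the non-collinearity of $w_{(j)}$ and $w$.
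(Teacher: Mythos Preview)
Your proof is correct. Both arguments split into the part of $\gamma_{z,-\zeta^\sharp}$ strictly past $y$ and the part between $z$ and $y$, and both dispatch the first part via Lemma~\ref{lemma: only causal curve}; your treatment there is in fact more careful, since you verify explicitly (by comparing the tangents $w$ and $-w_{(j)}$ at $y$) that the broken causal curve cannot coincide with $\gamma_{x_{(j)},\xi_{(j)}}|_{[0,s']}$. On the segment between $z$ and $y$ the two proofs diverge. The paper does not treat $s_0=0$ separately: it argues directly that any intersection $\tilde y_{s_0}$ in this range must lie past the first cut point of $\gamma_{x_{(j)},\nu_{s_0}}$, then uses lower semi-continuity of $\rho$ to obtain $\tilde s_k>\rho(x_{(j)},\xi_{(j)})-\epsilon>s'$, and passes to the limit via \cite[Lemma~6.1]{FLO} and \cite[Lemma~9.34]{Beem-Ehrlich-Easley} to land on a point of $\gamma_{x_{(j)},\xi_{(j)}}\cap\gamma_{y,w}((0,s''])$, contradicting uniqueness along $\gamma_{y,w}$. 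You instead settle $s_0=0$ first, then separate a neighborhood of $y$ (where transversality from Lemma~\ref{lemma: locally conormal distribution} together with $g(w_{(j)},w)\neq0$ rules out nearby intersections) from its complement (where compactness of $J^+(x_{(j)})\cap J^-(z)$ plus non-imprisonment bounds the affine parameters). Your approach trades the cut-locus lower bound for the conormal-hypersurface picture near $y$; the ``hard part'' you flag is exactly where the paper defers to the cited lemmas, and your sketch of it (the limit geodesic $\gamma_{x_{(j)},\xi_{(j)}}$ leaves $J^-(z)$ at some finite parameter, nearby geodesics do too by continuity and openness of the geodesic-flow domain, and once out of $J^-(z)$ a future-directed causal curve cannot return) is sound.
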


\begin{proof}
 Let $(x,\xi):=(x_{(j)},\xi_{(j)})$ and denote $K(x,\xi,s_0):= K(x_{(j)},\xi_{(j)},s_0)$ for simplicity. Assume that for all $s_0>0$, there exists a point $\tilde{y}_{s_0} = \gamma_{z,-\zeta^{\sharp}}(\tilde{s}) \in K(x,\xi,s_0)$ with $\tilde{y}_{s_0}\neq y$. By the definition of $K(x,\xi,s_0)$ in \eqref{eqn: flowout submanifold}, this means there exists a future-pointing null geodesic $\gamma_{x,\nu_{s_0}}$ from $x$ to $\tilde{y}_{s_0}$, with $\nu_{s_0}\in \W_{x,\xi,s_0}\subseteq LM$.

 Since both $y$ and $\tilde{y}_{s_0}$ lie on the same geodesic  null segment $\gamma_{z,-\zeta^{\sharp}}([0,\mathcal{T}(z,-\zeta^{\sharp})))$, we divide the proof into two cases.

  The first case is $\tilde{y}_{s_0} <y$. Then, the geodesic segment $\gamma_{x,\xi}$ from $x$ to $y$ and the broken piecewise smooth causal curve $\tilde{\gamma}$ formed by concatenating $\gamma_{x,\nu_{s_0}}$ from $x$ to $\tilde{y}_{s_0}$ and $\gamma_{z,-\zeta^{\sharp}}$ from $\tilde{y}_{s_0}$ to $y$, are two distinct causal paths from $x$ to $y$. This contradicts the fact that $\gamma_{x,\xi}([0,s^{\prime}]),s^{\prime}\in(0,\rho(x,\xi))$ is the unique causal curve from $x$ to $y$ according to the Lemma \ref{lemma: only causal curve}.

  The second case is $y<\tilde{y}_{s_0}\le z$. Then there are two distinct future-pointing piecewise smooth causal curves from $x$ to $\tilde{y}_{s_0}$. One is the lightlike geodesic $\gamma_{x,\nu_{s_0}}$, and the other is the broken null geodesic path consisting of the segments $\gamma_{x,\xi}|_{[0,s^{\prime}]}$ and $\gamma_{y,w}$ from $y$ to $\tilde{y}_{s_0}$. This implies that $\tilde{y}_{s_0}$ lies beyond the first cut point along $\gamma_{x,\nu_{s_0}}$ by Lemma \ref{lemma: only causal curve}.

  Recall that $y=\gamma_{x,\xi}(s^{\prime})$ for some $s^{\prime}\in (0,\rho(x,\xi))$. By the lower semi-continuity of the null cut function $\rho$, for any $\e>0$, there exists $\delta>0$ such that for all $\nu\in \W_{x,\xi,s_0}$ with $s_0<\delta$,
  $$
     \rho(x,\nu) > \rho(x,\xi)-\e >s^{\prime}.
  $$
  Fix such $\e$ and $\delta$. Then, there exists a sequence $\tilde{y}_{k} = \gamma_{x,\nu_{k}}(\tilde{s}_k)\in K(x,\xi,s_0)$ such that $\nu_k\to\xi$ and $\tilde{s}_k >\rho(x,\xi)-\e$. Since each $\tilde{y}_{k}$ lies on the compact geodesic segment $\gamma_{y,w}|_{[0,s^{\prime\prime}]}$,
  there exists a convergent subsequence. Without loss of generality, assume $\tilde{y}_{k}\to \tilde{y}$.  By \cite[Lemma 6.1]{FLO}, the corresponding parameters $\tilde{s}_{k}\to \tilde{s}\ge \rho(x,\xi)-\e$. Then, by \cite[Lemma 9.34]{Beem-Ehrlich-Easley}, we conclude that
$$
   \tilde{y}_{k} = \gamma_{x,\nu_k}(\tilde{s}_{k}) \to \tilde{y} = \gamma_{x,\xi}(\tilde{s})\in \gamma_{y,w}[0,s^{\prime\prime}],\quad \tilde{s}\ge \rho(x,\xi)-\e > s^{\prime}.
$$
This implies that both $y=\gamma_{x,\xi}(s^{\prime})$ and $\tilde{y} = \gamma_{x,\xi}(\tilde{s})$ lie on the same geodesic $\gamma_{x,\xi}$, and are connected to each other via the segment $\gamma_{y,w}$. This contradicts the fact that there are no cut points along $\gamma_{y,w}|_{[0,s^{\prime\prime}]}$.
\end{proof}

Note that the projection of $\beta_{z,-\zeta^{\sharp}}$ onto $M$ is the geodesic $\gamma_{z,-\zeta^{\sharp}}$. Since the manifold is globally hyperbolic, there are no closed null geodesics. Then, Lemma \ref{lemma: propagation of regularities}, together with the singular support of $f_{(123)}$ in \eqref{eqn: singular support of the source}, implies that
\begin{equation}\label{eqn: propagation of regularity}
  \{\beta_{z,-\zeta^{\sharp}}(s): s\in [0,\mathcal{T}(z,-\zeta^{\sharp}))\} \cap \WF\l(f_{(123)} \r) = \{(y,\eta)\}.
\end{equation}

Recall the pseudodifferential operator $\chi$ constructed in \eqref{eqn: property of the microlocal cut-off}. Decompose the solution $u_{(123)}$ of the third order linearized equation \eqref{eqn: three-fold linearized equation} as
\begin{equation}\label{eqn: decompose u_123}
u_{(123)} = v_{(123)} + w_{(123)},
\end{equation}
where $v_{(123)}$ and $w_{(123)}$ respectively solve the equations
\begin{equation} \label{Microlocal cut-off equation}
\begin{aligned}
  \begin{cases}
  Pv_{(123)} = \chi f_{(123)},\quad &\text{on }(0,T)\times N,\\
  v_{(123)}(0,x^{\prime}) = 0,\partial_{t}v_{(123)}(0,x^{\prime})=0,\quad & x^{\prime}\in N,
  \end{cases}
\end{aligned}
\end{equation}
and
\begin{equation}
\begin{aligned}
  \begin{cases}
  Pw_{(123)} =(\id-\chi)f_{(123)},\quad &\text{on }(0,T)\times N,\\
  w_{(123)}(0,x^{\prime}) = 0,\partial_{t}w_{(123)}(0,x^{\prime})=0,\quad & x^{\prime}\in N.
  \end{cases}
\end{aligned}
\end{equation}

By \eqref{eqn: propagation of regularity}, the bicharacteristic segment $\beta_{z,-\zeta}|_{[0,\mathcal{T}(z,-\zeta^{\sharp}))}$ defined by \eqref{eqn: reparameterized bicharacteristic} does not intersect the wavefront set of $(\id-\chi)f_{(123)}$. Then, by the propagation of singularities theorem \cite[Theorem 6.1.1]{DH}, together with the fact that $w_{(123)}$ has vanishing initial data, we obtain
\begin{equation}\label{eqn: regularity of w_123}
(z,\zeta)\notin \WF\l(w_{(123)}\r)
\end{equation}

To compute the pincipal symbol of $\chi f_{(123)}$, we apply the Greenleaf-Uhlmann calculus for pointwise products of conormal distributions \cite[Lemma 1.1]{GU}, see also \cite[Lemma 3.3, 3.6]{LUW}. First, we define the product of half-densities on the conormal bundles of submanifolds intersecting transversally.
\begin{definition}\label{def: definition of the product of the half-density}
  Let $Y_{(1)}$ and $Y_{(2)}$ be smooth submanifolds of the $n$-dimensional smooth manifold $X$ intersecting transversally. In local slice coordinates, $Y_{(1)}$ and $Y_{(2)}$ are expressed as
  \begin{equation}\label{eqn: local slice coordinate}
  \begin{aligned}
   Y_{(1)} &=\{(x^{\prime},x^{\prime\prime},x^{\prime\prime\prime})\in \R^{d_1}\times\R^{d_2}\times\R^{n-d_1-d_2}\mid x^{\prime}=0\},\\
   Y_{(2)} &=\{(x^{\prime},x^{\prime\prime},x^{\prime\prime\prime})\in \R^{d_1}\times\R^{d_2}\times\R^{n-d_1-d_2}\mid x^{\prime\prime}=0\}.
  \end{aligned}
  \end{equation}
   These coordinates exist, see for example \cite[Appendix C]{H3}. Let  $\omega_{(j)}$ be half-densities on $N^{\ast}Y_{(j)}$ for $j=1,2$, which in local slices coordinate takes expression as
\begin{align*}
  \omega_{(1)}(x^{\prime\prime},x^{\prime\prime\prime},\xi^{\prime}) = a_{(1)}(x^{\prime\prime},x^{\prime\prime\prime},\xi^{\prime})\l|dx^{\prime\prime}\wedge dx^{\prime\prime\prime} \r|^{1/2}|d\xi^{\prime}|^{1/2},\\
   \omega_{(2)}(x^{\prime},x^{\prime\prime\prime},\xi^{\prime\prime}) = a_{(2)}(x^{\prime},x^{\prime\prime\prime},\xi^{\prime\prime})\l|dx^{\prime}\wedge dx^{\prime\prime\prime} \r|^{1/2}|d\xi^{\prime\prime}|^{1/2},
\end{align*}
where $a_{(j)}$ is a smooth complex valued function on $N^{\ast}Y_{(j)}$. For any $\mu$, a half-density on $X$, we define the product on $N^{\ast}\l(Y_{(1)}\cap Y_{(2)}\r)$ by
\begin{equation}\label{eqn: half-density for transversal intersection}
 \l(\omega_{(1)}\omega_{(2)}\r)(x^{\prime\prime\prime},\xi^{\prime},\xi^{\prime\prime}) = \mu^{-1}(0,0,x^{\prime\prime\prime})\omega_{(1)}(0,x^{\prime\prime\prime},\xi^{\prime} )\omega_{(2)}(0,x^{\prime\prime\prime},\xi^{\prime\prime}).
\end{equation}
\end{definition}

Next, we prove that the product of half-densities restricted on the cornormal bundle of the intersecting submanifold defined in \eqref{eqn: half-density for transversal intersection} is a well-defined half-density.
\begin{lemma}\label{lemma: well-defined half-density}
 The product \eqref{eqn: half-density for transversal intersection} is a half-density on $N^{\ast}\l(Y_{(1)}\cap Y_{(2)} \r)$.
\end{lemma}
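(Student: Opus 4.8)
The statement is local and amounts to showing that the right-hand side of \eqref{eqn: half-density for transversal intersection} is independent of the choice of adapted slice coordinates of the form \eqref{eqn: local slice coordinate}; implicit here is that $\mu$ is nowhere vanishing, so that $\mu^{-1}$ is defined. Set $Z:=Y_{(1)}\cap Y_{(2)}$ and fix two such coordinate systems $x=(x',x'',x''')$ and $\tilde x=(\tilde x',\tilde x'',\tilde x''')$. Since $\{x'=0\}=\{\tilde x'=0\}$ and $\{x''=0\}=\{\tilde x''=0\}$, Hadamard's lemma gives smooth matrix-valued functions with $\tilde x'=\mathbf A(x)x'$ and $\tilde x''=\mathbf B(x)x''$, where $\mathbf A$ is invertible along $Y_{(1)}$ and $\mathbf B$ along $Y_{(2)}$. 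Differentiating and restricting to $Z$ one finds that $\partial\tilde x/\partial x$ is block lower-triangular there, with diagonal blocks $A_0:=\mathbf A|_Z$, $B_0:=\mathbf B|_Z$, and $C_0:=\partial\tilde x'''/\partial x'''|_Z$. This block structure at $Z$ is precisely where transversality of $Y_{(1)}$ and $Y_{(2)}$ enters, and it is the one non-formal input of the argument.

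I would then read off how each ingredient transforms. On $Y_{(1)}$ the fiber variable changes by $\tilde\xi'=\mathbf A^{-T}\xi'$ (and likewise $\tilde\xi''=\mathbf B^{-T}\xi''$ on $Y_{(2)}$, and both on $Z$), so at points of $Z$ the Jacobians of the induced coordinate changes on $N^{\ast}Y_{(1)}$, on $N^{\ast}Y_{(2)}$, and on $N^{\ast}Z$, in the coordinates $(x'',x''',\xi')$, $(x',x''',\xi'')$ and $(x''',\xi',\xi'')$ respectively, are again block lower-triangular with diagonal blocks $(B_0,C_0,A_0^{-T})$, $(A_0,C_0,B_0^{-T})$ and $(C_0,A_0^{-T},B_0^{-T})$. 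Consequently the coefficient $b$ of $\mu$, the coefficients $a_{(1)},a_{(2)}$ of $\omega_{(1)},\omega_{(2)}$, and the candidate coefficient $c:=b^{-1}a_{(1)}a_{(2)}$ pick up at $Z$ the factors $|\det A_0\det B_0\det C_0|^{-1/2}$, $|\det A_0|^{1/2}|\det B_0\det C_0|^{-1/2}$, $|\det B_0|^{1/2}|\det A_0\det C_0|^{-1/2}$ and $|\det A_0\det B_0|^{1/2}|\det C_0|^{-1/2}$, and a one-line power count shows that the factor acquired by $b^{-1}a_{(1)}a_{(2)}$ equals exactly the last of these, i.e. the factor required of a half-density on $N^{\ast}Z$ written against $|dx'''\wedge d\xi'\wedge d\xi''|^{1/2}$. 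It then remains to note that the density symbols themselves assemble correctly: writing $|dx'\wedge dx''\wedge dx'''|=|dx'||dx''||dx'''|$ and similarly for the two lower-dimensional wedges, the product $|dx'\wedge dx''\wedge dx'''|^{-1/2}|dx''\wedge dx'''|^{1/2}|dx'\wedge dx'''|^{1/2}$ collapses to $|dx'''|^{1/2}$, and together with the $|d\xi'|^{1/2}$ from $\omega_{(1)}$ and the $|d\xi''|^{1/2}$ from $\omega_{(2)}$ this is precisely $|dx'''\wedge d\xi'\wedge d\xi''|^{1/2}$. Hence \eqref{eqn: half-density for transversal intersection} is a bona fide local half-density on $N^{\ast}Z$ in every adapted chart, and the transformation law just verified shows these local expressions patch together.

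An essentially equivalent coordinate-free route would instead use the transversality splitting $N^{\ast}_zZ\cong N^{\ast}_zY_{(1)}\oplus N^{\ast}_zY_{(2)}$ from \eqref{eqn: conormal bundle of transversal intersection I}, together with the canonical identification of the half-density bundle of the total space of a vector bundle $\pi\colon E\to Y$ with the pullback of $\Omega^{1/2}(Y)$ tensored with the fiberwise half-density bundle of $E$, and with the isomorphism $\Omega^{1/2}(X)|_Y\cong\Omega^{1/2}(Y)\otimes\Omega^{1/2}(N_YX)$ relating densities on $X$, on $Y$, and on the normal bundle $N_YX=TX|_Y/TY$; chaining these isomorphisms exhibits \eqref{eqn: half-density for transversal intersection} as the image of $\omega_{(1)}\otimes\omega_{(2)}\otimes\mu^{-1}$ and makes the invariance manifest. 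Either way, the only genuine difficulty is bookkeeping: keeping track of where the fiber densities $|d\xi'|$, $|d\xi''|$ sit and checking that the factor $\mu^{-1}$ cancels precisely the one copy of the tangential density $|dx'''|$ that is over-counted when $\omega_{(1)}$ is paired with $\omega_{(2)}$, which is exactly the role transversality assigns to $\mu$.
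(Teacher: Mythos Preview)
Your proof is correct and follows essentially the same approach as the paper: both take two adapted slice-coordinate systems, observe that the Jacobian $\partial\tilde x/\partial x$ is block triangular along $Z$ with diagonal blocks governing the $x'$, $x''$, $x'''$ directions separately, and then verify by direct bookkeeping that $\mu^{-1}\omega_{(1)}\omega_{(2)}$ transforms with exactly the factor required of a half-density on $N^{\ast}Z$. Your write-up is in fact more explicit than the paper's (and the additional coordinate-free sketch is a nice bonus), but the argument is the same.
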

\begin{proof}
 Suppose that $(\tilde{x}^{\prime},\tilde{x}^{\prime\prime},\tilde{x}^{\prime\prime\prime})$ is a system of coordinates such that $Y_{(1)}$ and $Y_{(2)}$ are of the form \eqref{eqn: local slice coordinate} in these coordinates, that is, $\tilde{x}^{\prime}(0,x^{\prime\prime},x^{\prime\prime\prime})=0$ and $\tilde{x}^{\prime\prime}(x^{\prime},0,x^{\prime\prime\prime})=0$. It induces a coordinate change in $T^{\ast}X$ given by $(\tilde{x},\tilde{\xi})$. Then,
 \[ \l.\frac{d\tilde{x}}{dx}\r|_{x^{\prime\prime\prime}=0}=
\begin{pmatrix}
\frac{d\tilde{x}^{\prime}}{dx^{\prime}} & 0 & 0 \\
0 & \frac{d \tilde{x}^{\prime\prime}}{dx^{\prime\prime}} &0 \\
\ast &\ast &\frac{d\tilde{x}^{\prime\prime\prime}}{dx^{\prime\prime\prime}}
\end{pmatrix}.
\]
Then, under the change of coordinates,
\begin{align*}
 &\mu^{-1}(0,0,x^{\prime\prime\prime})\omega_{(1)}(0,x^{\prime\prime\prime},\xi^{\prime})\omega_{(2)}(0,x^{\prime\prime\prime},\xi^{\prime\prime})\\
 =& \l|\frac{d\tilde{x}}{dx} \r|^{-1/2} \mu^{-1}(0,0,\tilde{x}^{\prime\prime\prime}) \l|\frac{d(\tilde{x}^{\prime\prime},\tilde{x}^{\prime\prime\prime})}{d(x^{\prime\prime},x^{\prime\prime\prime})} \r|^{1/2} \l|\frac{d x^{\prime}}{d\tilde{x}^{\prime}} \r|^{1/2}\omega_{(1)}(0,\tilde{x}^{\prime\prime\prime},\tilde{\xi}^{\prime})\\
 &\cdot \l|\frac{d(\tilde{x}^{\prime},\tilde{x}^{\prime\prime})}{d(x^{\prime},x^{\prime\prime})} \r|^{1/2}  \l|\frac{d x^{\prime\prime}}{d\tilde{x}^{\prime\prime}} \r|^{1/2} \omega_{(2)}(0,\tilde{x}^{\prime\prime\prime},\tilde{\xi}^{\prime\prime})\\
 =& \l|\frac{d \tilde{x}^{\prime\prime\prime}}{d x^{\prime\prime\prime}} \r|^{1/2} \l|\frac{d(x^{\prime},x^{\prime\prime})}{d(\tilde{x}^{\prime},\tilde{x}^{\prime\prime})} \r|^{1/2} \mu^{-1}(0,0,\tilde{x}^{\prime\prime\prime}) \omega_{(1)}(0,\tilde{x}^{\prime\prime\prime},\tilde{\xi}^{\prime})\omega_{(2)}(0,\tilde{x}^{\prime\prime\prime},\tilde{\xi}^{\prime\prime}),
\end{align*}
which guarantees that the product \eqref{eqn: half-density for transversal intersection} restricted on $N^{\ast}\l(Y_{(1)}\cap Y_{(2)}\r)$ is a well-defined half-density.
\end{proof}

Observe that 
$$
N^{\ast}\l(Y_{(1)} \cap Y_{(2)}\r) = \l\{ (x^{\prime},x^{\prime\prime},x^{\prime\prime\prime}; \xi^{\prime},\xi^{\prime\prime},\xi^{\prime\prime\prime})\mid x^{\prime } = 0, x^{\prime\prime}=0, \xi^{\prime\prime\prime}=0\r\},
$$
and for $p \in Y_{(1)}\cap Y_{(2)}$, we have that
$$
 N^{\ast}_{p}\l(Y_{(1)} \cap Y_{(2)}\r) = N^{\ast}_{p}Y_{(1)}\oplus N^{\ast}_{p}Y_{(2)}.
$$
Moreover, if $\xi\in N^{\ast}_{p}\l(Y_{(1)}\cap Y_{(2)} \r)$, then there is a unique decomposition
$$
 \xi = \xi_{(1)}+\xi_{(2)},\quad \xi_{(j)}\in N^{\ast}_{p}Y_{(j)}, \text{ for } j=1,2.
$$
In local coordinates, 
$$
p=(0,0,x^{\prime\prime\prime}),\quad \xi = (\xi^{\prime},\xi^{\prime\prime},0),\quad
\xi_{(1)}=(\xi^{\prime},0,0),\quad \xi_{(2)} = (0,\xi^{\prime\prime},0),
$$
for some $x^{\prime\prime\prime}\in \R^{n-d_1-d_2},\xi^{\prime}\in\R^{d_1}$ and $\xi^{\prime\prime}\in \R^{d_2}$. We write
\begin{equation}\label{eqn: product of half-density generally}
\mu^{-1}(p)\omega_{(1)}\l(p,\xi_{(1)}\r)\omega_{(2)}\l(p,\xi_{(2)}\r) := \mu^{-1}(0,0,x^{\prime\prime\prime})\omega_{(1)}(0,x^{\prime\prime\prime},\xi^{\prime})\omega_{(2)}(0,x^{\prime\prime\prime},\xi^{\prime\prime}).
\end{equation}

The following proposition is Greenleaf-Uhlmann calculus for pointwise products of conormal distributions \cite[Lemma 1.1]{GU}.
\begin{proposition}\label{prop: Greenlead-Uhlmann's calculus}
  Let $Y_{(1)}$ and $Y_{(2)}$ be two smooth submanifolds of a smooth manifold $X$ intersecting transversally, with local slice coordinates of the form \eqref{eqn: local slice coordinate}. Denote their codimension by $d_1$ and $d_2$, respectively. Suppose $v_{(j)}\in I^{\mu_j}(N^{\ast}Y_{(j)};E\otimes\Omega^{1/2})$ for $j=1,2$. 
  
  The pointwise product $v_{(1)}v_{(2)}$ is defined via
  \begin{equation}\label{eqn: pointwise product of distributions}
    v_{(1)}v_{(2)} = \l(\mu^{-1}v_{(1)} \r)\l(\mu^{-1}v_{(2)} \r)\mu,
  \end{equation}
  where $\mu$ is a fixed strictly positive half-density on $X$.

  Let $\chi$ be a pseudodifferential operator such that $\WF(\chi) \cap N^{\ast}Y_{(j)} = \varnothing$ for $j=1,2$. Then,
  $$
   \chi\l(v_{(1)}v_{(2)} \r)\in I^{\mu_1+\mu_2+\frac{n}{4}}\l(N^{\ast}\l(K_{(1)}\cap K_{(2)} \r); E\otimes\Omega^{1/2}\r),
  $$
  with principal symbol given by
  $$
   \sigma\l[\chi\l(v_{(1)}v_{(2)} \r)\r](x,\xi) = C(d_1,d_2)\sigma[\chi](x,\xi)\mu^{-1}(x)\sigma\l[v_{(1)}\r]\l(x,\xi_{(1)}\r)\sigma\l[v_{(2)}\r]\l(x,\xi_{(2)}\r),
  $$
  where $\xi = \xi_{(1)}+\xi_{(2)}$ with $(x,\xi_{(j)})\in N^{\ast}K_{(j)}\b 0$, and $C(d_1,d_2)\in \R^{+}$ is a constant dependent only on the codimensions $d_1$ and $d_2$, and the product is defined by \eqref{eqn: product of half-density generally}.
\end{proposition}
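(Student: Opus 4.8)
The plan is to reduce the statement to a local computation in the slice coordinates of \eqref{eqn: local slice coordinate}, strip off the inessential factors — the bundle $E$, which is carried along as a passive tensor factor, and the reference half-density $\mu$ — and represent $v_{(1)},v_{(2)}$ as oscillatory integrals. Write $x=(u,v,w)$ with $u\in\R^{d_1}$, $v\in\R^{d_2}$, $w\in\R^{n-d_1-d_2}$, so that $Y_{(1)}=\{u=0\}$, $Y_{(2)}=\{v=0\}$ and $Y_{(1)}\cap Y_{(2)}=\{u=0,\ v=0\}$ has codimension $d_1+d_2$. With $\tilde v_{(j)}:=\mu^{-1}v_{(j)}$, Definition \ref{def: conormal distributions} gives locally $\tilde v_{(1)}(x)=(2\pi)^{(2d_1-n)/4}\int_{\R^{d_1}}e^{\imath u\cdot\xi_u}a_1(v,w,\xi_u)\,d\xi_u$ and $\tilde v_{(2)}(x)=(2\pi)^{(2d_2-n)/4}\int_{\R^{d_2}}e^{\imath v\cdot\xi_v}a_2(u,w,\xi_v)\,d\xi_v$ with $a_1\in S^{\mu_1+n/4-d_1/2}$, $a_2\in S^{\mu_2+n/4-d_2/2}$. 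By \eqref{eqn: pointwise product of distributions} one has $v_{(1)}v_{(2)}=\tilde v_{(1)}\tilde v_{(2)}\,\mu$, and $\tilde v_{(1)}\tilde v_{(2)}$ is the oscillatory integral in the joint fiber variable $(\xi_u,\xi_v)$ with linear phase $u\cdot\xi_u+v\cdot\xi_v$ and amplitude $a_1(v,w,\xi_u)\,a_2(u,w,\xi_v)$ — exactly the shape of a conormal distribution for $N^{\ast}(Y_{(1)}\cap Y_{(2)})$.

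The first genuine point is the wavefront analysis. Since $\WF(\tilde v_{(j)})\subseteq N^{\ast}Y_{(j)}$ and transversality forces $N^{\ast}_pY_{(1)}\cap(-N^{\ast}_pY_{(2)})=\{0\}$ over $p\in Y_{(1)}\cap Y_{(2)}$, the product is well defined and by \cite[Theorem 8.2.10]{H1} its wavefront set lies in $N^{\ast}Y_{(1)}\cup N^{\ast}Y_{(2)}\cup N^{\ast}(Y_{(1)}\cap Y_{(2)})$, the last piece coming from the transversal sum $N^{\ast}_pY_{(1)}\oplus N^{\ast}_pY_{(2)}=N^{\ast}_p(Y_{(1)}\cap Y_{(2)})$. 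The hypothesis $\WF(\chi)\cap N^{\ast}Y_{(j)}=\varnothing$ deletes the first two pieces, so $\WF(\chi(v_{(1)}v_{(2)}))\subseteq N^{\ast}(Y_{(1)}\cap Y_{(2)})$; on that set $\xi_u\neq0$ and $\xi_v\neq0$ (otherwise one lands back in $N^{\ast}Y_{(2)}$ or $N^{\ast}Y_{(1)}$), and since $\WF(\chi)$ is a closed cone at positive conic distance from $N^{\ast}Y_{(1)}\cup N^{\ast}Y_{(2)}$ over compacts, after composing with the pseudodifferential operator $\chi$ the amplitude of $\chi(v_{(1)}v_{(2)})$ may be taken supported where $|\xi_u|\sim|\xi_v|$ (hence $\sim|(\xi_u,\xi_v)|$). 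There $a_1(v,w,\xi_u)\,a_2(u,w,\xi_v)$ is a classical symbol in $(\xi_u,\xi_v)$ of order $(\mu_1+n/4-d_1/2)+(\mu_2+n/4-d_2/2)$, precisely the order required for membership in $I^{\mu_1+\mu_2+n/4}(N^{\ast}(Y_{(1)}\cap Y_{(2)});E\otimes\Omega^{1/2})$.

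For the principal symbol, Taylor-expand $a_1(v,w,\xi_u)=a_1(0,w,\xi_u)+\sum_k v_k\,b_k(v,w,\xi_u)$ and likewise $a_2$ in the variable $u$. In the integrand each $v_k$ equals $-\imath\partial_{(\xi_v)_k}$ applied to the exponential, and one integration by parts converts it into a $\xi_v$-derivative of the amplitude, lowering its order by one; similarly for the factors $u_k$. Hence, modulo lower order, the amplitude of $\chi(v_{(1)}v_{(2)})$ is $\sigma[\chi]\cdot a_1^{\,0}(0,w,\xi_u)\,a_2^{\,0}(0,w,\xi_v)$, where $a_j^{\,0}$ is the leading part of $a_j$, up to the explicit numerical factor coming from the $(2\pi)$-powers and the change of oscillatory-integral normalization between codimensions $d_1,d_2$ and $d_1+d_2$. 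Reinstating $\mu$ and rewriting the leading amplitudes as principal symbols in the half-density formalism — this is exactly the product of half-densities of Definition \ref{def: definition of the product of the half-density}, which Lemma \ref{lemma: well-defined half-density} certifies is a well-defined half-density on $N^{\ast}(Y_{(1)}\cap Y_{(2)})$ — absorbs all dimensional factors and yields $\sigma[\chi(v_{(1)}v_{(2)})](x,\xi)=C(d_1,d_2)\,\sigma[\chi](x,\xi)\,\mu^{-1}(x)\,\sigma[v_{(1)}](x,\xi_{(1)})\,\sigma[v_{(2)}](x,\xi_{(2)})$ with $\xi=\xi_{(1)}+\xi_{(2)}$ and $C(d_1,d_2)>0$ depending only on $d_1,d_2$; coordinate invariance of both sides is Lemma \ref{lemma: well-defined half-density} together with the invariance of the conormal symbol.

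The step I expect to be the main obstacle is the one in the second paragraph: making rigorous that after the cutoff $\chi$ the product amplitude $a_1(v,w,\xi_u)\,a_2(u,w,\xi_v)$ really is a classical symbol in the \emph{joint} variable $(\xi_u,\xi_v)$, which hinges on the quantitative statement that $\WF(\chi)$ forces $|\xi_u|\sim|\xi_v|$ near $N^{\ast}(Y_{(1)}\cap Y_{(2)})$, together with the careful bookkeeping of orders, $(2\pi)$-powers and half-density normalizations producing the exact constant $C(d_1,d_2)$. A cleaner but less explicit alternative is to present pointwise multiplication as the composition $\Delta^{\ast}(\tilde v_{(1)}\boxtimes\tilde v_{(2)})$ of the exterior tensor product with pullback by the diagonal $\Delta\colon X\to X\times X$; transversality of $Y_{(1)},Y_{(2)}$ makes this a clean composition of the Lagrangian $N^{\ast}(Y_{(1)}\times Y_{(2)})$ with the canonical relation of $\Delta^{\ast}$, directly producing a conormal distribution on $N^{\ast}(Y_{(1)}\cap Y_{(2)})$ whose symbol is the product of the symbols, after which one extracts the constant by evaluating on a model example.
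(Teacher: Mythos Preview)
The paper does not actually prove this proposition: it is stated as a citation of Greenleaf--Uhlmann \cite[Lemma~1.1]{GU} (see the sentence immediately preceding the proposition, ``The following proposition is Greenleaf--Uhlmann calculus for pointwise products of conormal distributions''), and no proof is given in the text. So there is nothing in the paper to compare your argument against directly.

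That said, your sketch is essentially the standard proof one finds in the Greenleaf--Uhlmann style: pass to slice coordinates, write each factor as an oscillatory integral with linear phase in disjoint fiber variables, observe that the product has phase $u\cdot\xi_u+v\cdot\xi_v$ parametrizing $N^{\ast}(Y_{(1)}\cap Y_{(2)})$, use the microlocal cutoff $\chi$ to localize away from $N^{\ast}Y_{(j)}$ so that $|\xi_u|\sim|\xi_v|$ and the product amplitude becomes a genuine joint symbol, and read off the principal symbol by freezing the base variables on the intersection. The Taylor-expansion/integration-by-parts argument for showing that the cross terms $v_k b_k$ and $u_k c_k$ contribute at lower order is the right mechanism. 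Your alternative via $\Delta^{\ast}(\tilde v_{(1)}\boxtimes\tilde v_{(2)})$ is also standard and arguably cleaner for the membership statement, though less transparent for pinning down $C(d_1,d_2)$.

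One point worth tightening: the passage ``after composing with the pseudodifferential operator $\chi$ the amplitude \dots\ may be taken supported where $|\xi_u|\sim|\xi_v|$'' is correct but deserves a line of justification --- you are implicitly decomposing $v_{(1)}v_{(2)}$ as a conormal distribution on $N^{\ast}(Y_{(1)}\cap Y_{(2)})$ plus pieces with wavefront set in $N^{\ast}Y_{(1)}\cup N^{\ast}Y_{(2)}$, and then arguing that $\chi$ kills the latter modulo $C^{\infty}$; making this decomposition explicit (e.g.\ via a conic partition of unity in $(\xi_u,\xi_v)$) is what converts the heuristic into a proof.
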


For any permutation $(i,j,k)$ of $(1,2,3)$, choose a microlocal cut-off $\chi_0$ in a conic neighborhood of $(y,r^{-2}\kappa_{(i)}\eta_{(i)}+r^{-2}\kappa_{(j)}\eta_{(j)})$. Although $\l\langle |g|^{-1/4}u_{(i)}, |g|^{-1/4}u_{(j)}\r\rangle$ is not a conormal distribution, $\chi_0\l( \l\langle |g|^{-1/4}u_{(i)}, |g|^{-1/4}u_{(j)}\r\rangle\r)$ is. By the proof of \cite[(56)]{CLOP}, see also the proof of \cite[Lemma 4.8]{KLOU}, 
\begin{multline}
\chi \l( \mathrm{Re}\l(\l\langle |g|^{-1/4}u_{(i)}, |g|^{-1/4}u_{(j)}\r)\r\rangle u_{(k)}\r)  \\
=\chi \l(\chi_0 \l( \mathrm{Re}\l(\l\langle |g|^{-1/4}u_{(i)}, |g|^{-1/4}u_{(j)}\r)\r\rangle\r)u_{(k)}\r) \mod C^{\infty}.
\end{multline}
Then, by applying Proposition \ref{prop: Greenlead-Uhlmann's calculus} twice, we obtain 
\begin{equation}\label{eqn: microlocal structure of the interaction source}
\chi f_{(123)} \in I^{3\mu+2}\l(N^{\ast}(\cap_{j=1}^{3}K_{(j)});E\otimes\Omega^{1/2}\r),
\end{equation}
and the principal symbol at $(y,\eta)$ is given by
\begin{multline}\label{eqn: principal symbol of the interaction source}
 \sigma\l[\chi f_{(123)}\r](y,\eta)=\frac{1}{2}|g(y)|^{-1/2}\sigma\l[u_{(\tau(3))}\r]\l(y,r^{-2}\kappa_{(\tau(3))}\eta_{(\tau(3))}\r)\\
 \times\sum_{\tau\in S(3)}\operatorname{Re}\l\langle \sigma\l[u_{(\tau(1))}\r]\l(y,r^{-2}\kappa_{(\tau(1))}\eta_{(\tau(1))}\r),\sigma\l[u_{(\tau(2))}\r]\l(y,r^{-2}\kappa_{(\tau(2))}\eta_{(\tau(2))}\r)\r\rangle, 
\end{multline}
where the inner product is taken with respect to a local trivialization of $E\otimes\Omega_{1/2}$, and is locally the standard Hermitian inner product on $\C^{n}$. Here, $\Omega_{1/2}$ denotes the half-density bundle over $N^{\ast}(\cap_{j=1}^{3}K_{(k)})$.

\subsection{Proof of the main theorem}
In this section, we compute an explicit expression of and  principal symbol of $v_{(123)}$ to recover the broken light-ray transform from the source-to-solution map, thereby completing the proof of Theorem \ref{thm: main theorem}. Our argument is inspired by the proof of \cite[Theorem 4]{CLOP}

\begin{proof}[Proof of Theorem \ref{thm: from source-to-solution map to broken light-ray transforms}]

Recall that $u_{(1)},u_{(2)}$ and $u_{(3)}$ are conormal distributions near $y$, as given in \eqref{eqn: u_j are locally conormal distributions}. Also recall the null geodesics from $x_{(1)},x_{(2)}$ and $x_{(3)}$ to $y$ are parameterized so that \eqref{eqn: reparameter of the geodesics} holds with corresponding covectors defined in \eqref{eqn: corresponding covectors}. For $k=2,3$, let $\beta_{(k)}(s)$ denotes the bicharacteristic such that
$$
\beta_{(k)}(0) =\l(x_{(k)},\nu_{(k)}\r),\quad\beta_{(k)}(s^{\prime}) = \l(y,\eta_{(k)}\r),\quad \nu_{(k)}\in L_{x_{(j)}}^{+,\ast}M.
$$ 
Similarly, define $\beta_{(1)}$ to be the bicharacteristic such that
$$
\beta_{(1)}(0) =\l(x_{(1)},\nu_{(1)}\r),\quad\beta_{(1)}(-s^{\prime}) = \l(y,\eta_{(1)}\r),\quad \nu_{(1)}\in L_{x_{(1)}}^{-,\ast}M.
$$
The principal symbol of $u_{(j)}$ for $j=1,2,3$ on the bicharacteristic $\beta_{(j)}(s)$ satisfies the transport equation \eqref{eqn: transport equation for principal symbol}, with initial value
\begin{align*}
  \sigma\l[u_{(j)}\r]\l(x_{(j)},\zeta_{(j)}\r) &= \mathscr{R}\l(\sigma[\Box_g]^{-1}\sigma\l[|g|^{-1/4}f_{(j)}\r] \r)\l(x_{(j)},\nu_{(j)}\r)\\
  &= c_{(j)}|g(x_{(j)})|^{-1/4}\mathscr{R}\l( \sigma[\Box_g]^{-1}\sigma\l[\chi_{(j)}\r]\r)\l(x_{(j)},\nu_{(j)}\r)\\
  &:= c_{(j)}\alpha_{(j)},
\end{align*}
where $\alpha_{(j)}$ is a nonvanishing complex-valued factor. We may write
$$
 \sigma\l[u_{(j)}\r]\l(x_{(j)},\zeta_{(j)} \r) = \tilde{c}_{(j)}\tilde{\alpha}_{(j)},\quad\tilde{\alpha}_{(j)}\in \R^{+},
$$ 
where
$$
 \quad \tilde{c}_{(j)} = c_{(j)} e^{\imath\arg\l(\alpha_{(j)}\r)},\quad\text{and}\quad \tilde{\alpha}_{(j)} = \l| \alpha_{(j)}\r|.
$$
Here $|\cdot|$ denotes the modulus and $\arg(\cdot)$ denotes the argument in $\C$. 

In the construction of the source \eqref{eqn: construction of sources}, we may choose the constants $c_{(j)}\in \C^{n}\b0$ so that the corresponding vectors $\tilde{c}_{(j)}$ satisfy the following identity
\begin{equation}\label{eqn: equality of tilde c_j by construction}
\tilde{c}_{(1)} = \tilde{c}_{(2)} = \tilde{c}_{(3)},\quad\text{and}\quad \l|\tilde{c}_{(j)}\r| =1.
\end{equation}

For each $j=1,2,3$, denote by $\omega_{(j)}$ the half-density on $N^{\ast}K_{(j)}\b0$ , which is a strictly positive function on $N^{\ast}K_{(j)}\b0$, homogeneous of degree $1/2$. Denote the volume factor as in \eqref{eqn: the factor of divergence of Hamilton flow} by
$$
\rho_{(j)}(s) = \int_{0}^{s}\div_{\omega}H_{P}\l(\beta_{(j)}(\tilde{s})\r)d\tilde{s}\in\R.
$$

For $k=2,3$, by the homogeneity of the principal symbol given in \eqref{eqn: homogeneity of the principal symbol}, we have 
\begin{equation}\label{eqn: principal symbol at y: I}
\l(\omega_{(k)}^{-1}\sigma\l[u_{(k)}\r]\r)\l(y,r^{-2}\kappa_{(k)}\eta_{(k)}\r)  = \l(r^{-2}\kappa_{(k)}\r)^{\mu+1/2} e^{-\rho_{(k)}(s^{\prime})} \tilde{\alpha}_{(k)} \mathbf{P}^{A}_{\gamma_{x_{(k)},\xi_{(k)}}([0,s^{\prime}])}\tilde{c}_{(k)}.
\end{equation}

Moreover, we compute the principal symbol of $u_{(1)}$ as follows
\begin{align}
\l(\omega_{(1)}^{-1}\sigma\l[u_{(1)}\r]\r)\l(y,r^{-2}\kappa_{(1)}\eta_{(1)}\r)
 \notag & = \l(r^{-2}\kappa_{(1)}\r)^{\mu+1/2} e^{-\rho_{(1)}(-s^{\prime})} \tilde{\alpha}_{(1)} \mathbf{P}^{A}_{\gamma_{x_{(1)},-\xi_{(1)}}([-s^{\prime},0])} \tilde{c}_{(1)}\\
\label{eqn: principal symbol at y: II}  & = \l(r^{-2}\kappa_{(1)}\r)^{\mu+1/2} e^{-\rho_{(1)}(-s^{\prime})} \tilde{\alpha}_{(1)} \mathbf{P}^{A}_{\gamma_{x_{(1)},\xi_{(1)}}([0,s^{\prime}])} \tilde{c}_{(1)},
\end{align}
where the second equality follows from the reversal property of parallel transport maps given in \eqref{eqn: minus parameter parallel transport}.

For convenience, denote
\begin{equation}\label{eqn: notation for calculating the principal symbol}
 \begin{aligned}
  \tilde{c}_{(j)}^{\into} = \mathbf{P}^{A}_{\gamma_{x,\xi}([0,s^{\prime}])}&\tilde{c}_{(j)}\in \C^{n}\b0,\quad \tilde{\alpha}_{(1)}^{\prime} = \l(r^{-2}\kappa_{(1)}\r)^{\mu+1/2}e^{-\rho_{(1)}(- s^{\prime})}\tilde{\alpha}_{(1)}\in \R^{+},\\
  &\alpha_{(k)}^{\prime} = \l(r^{-2}\kappa_{(k)}\r)^{\mu+1/2}e^{-\rho_{(k)}(- s^{\prime})}\tilde{\alpha}_{(k)}\in \R^{+},\quad k=2,3.
  \end{aligned}
\end{equation}

 Moreover, by \eqref{eqn: half-density for transversal intersection}, the half density on $N^{\ast}(\cap_{j=1}^{3}K_{(j)})$ evaluated at $(y,\eta)$ is given by
\begin{equation}\label{eqn: half-density of the interaction source}
   \omega(y,\eta) = \prod_{j=1}^{3} \l(r^{-2}\kappa_{(j)}\r)^{1/2}\omega_{(j)}\l(y,\eta_{(j)}\r).
\end{equation}

Combining \eqref{eqn: principal symbol of the interaction source}, \eqref{eqn: principal symbol at y: I}, \eqref{eqn: principal symbol at y: II}, \eqref{eqn: notation for calculating the principal symbol} and \eqref{eqn: half-density of the interaction source}, the principal symbol of $\chi f_{(123)}$ at $(y,\eta)$ is 
$$
\sigma\l[\chi f_{(123)}\r](y,\eta) = C\omega(y,\eta)|g(y)|^{-1/2} \tilde{\alpha}_{(1)}^{\prime}\tilde{\alpha}^{\prime}_{(2)}\tilde{\alpha}^{\prime}_{(3)} \sum_{\tau\in S(3)} \mathrm{Re}\l\langle  \tilde{c}_{(\tau(1))}^{\into},\tilde{c}_{(\tau(2))}^{\into} \r\rangle c_{(\tau(3))}^{\into}, 
$$
where $C$ is a positive constant only dependent on the codimension of $K_{(1)}$ and $K_{(2)}\cap K_{(3)}$ by Proposition \ref{prop: Greenlead-Uhlmann's calculus}.

Define the intersecting Lagrangian pair as
$$
\Lambda_{0} = N^{\ast}\l(K_{(1)}\cap K_{(2)}\cap K_{(3)} \r),\quad \Lambda_1 \text{ is the future flowout of $\Lambda_0$.}
$$

Since the source $\chi f_{(123)}$ of the third order linearized equation \eqref{Microlocal cut-off equation} is a conormal distribution associated with $\Lambda_0$, as given by \eqref{eqn: microlocal structure of the interaction source}, Theorem \ref{thm: paramatrix of the wave equations} implies that the solution $v_{(123)}$ is a paired Lagrangian distribution:
$$
v_{(123)} \in I^{3\mu+1/2}(M;\Lambda_0,\Lambda_1;E\otimes\Omega^{1/2}).
$$

The principal symbol of $v_{(123)}$ on the bicharacteristic $\beta_{\out}([0,+\infty))\subseteq\Lambda_1$, as given in \eqref{eqn: bicharacteristic}, satisfies the transport equation \eqref{eqn: transport equation for principal symbol} with initial value
\begin{equation}\label{eqn: the principal symbol of v_123 at (y,eta)}
 \sigma\l[v_{(123)}\r](y,\eta) = C\alpha_{\out}\tilde{\alpha}_{(1)}^{\prime}\tilde{\alpha}_{(2)}^{\prime}\tilde{\alpha}_{(3)}^{\prime}\sum_{\tau\in S(3)} \mathrm{Re}\l\langle   \tilde{c}_{\tau(1)}^{\into},\tilde{c}_{\tau(2)}^{\into} \r\rangle \tilde{c}_{\tau(3)}^{\into} ,
\end{equation}
where, by the property of the symbol transition map $\mathscr{R}$ in \eqref{eqn: mapping property of R},
$$
 \alpha_{\out}=|g(y)|^{-1/2}\mathscr{R}\l( \sigma[\Box_g]^{-1}\omega\r)(y,\eta)\in \C\b 0.
$$

We trivialize the half-density bundle on $\Lambda_1$ as follows. By \cite[Theorem 3.1.3]{H}, for all $\lambda_0\in \Lambda_1\b\Lambda_0$, there exists a conic neighborhood $V$ such that 
$$
V\cap \Lambda_1 \subseteq \{(H^{\prime}(\xi),\xi),\xi\in\R^{4}\},
$$ 
where $H$ is a smooth function homogeneous of degree $1$.

Let $\pi$ be the projection from $T^{\ast}M$ to $M$. Then, $v_{(123)}$ near $\pi\lambda_0$ locally takes the form
\begin{equation}\label{eqn: local expression of v_123}
  v_{(123)}(x) = \int_{\R^{4}}e^{\imath (\langle x,\xi\rangle -H(\xi))}b(x,\xi)d\xi,\quad b\in S^{m-1}(\R^{4}\times\R^{4};E\otimes\Omega^{1/2}).
\end{equation}
By \eqref{eqn: density} and \eqref{eqn: principal symbol}, the half-density bundle over $\Lambda_1$ can be locally trivialized by the unity density of order $1/2$ on the fiber of $T^{\ast}M$, that is
\begin{equation}\label{eqn: trivialization of the half-density bundle on Lambda_1}
  |d\xi|^{1/2}\in S^{2}(\Lambda_1;\Omega_{1/2}),
\end{equation}
which is smooth up to $\partial\Lambda_0$.

Next, we locally trivialize the Keller-Maslov bundle over $\Lambda_1$ near the bicharacteristic segment $\beta_{\out}([0,s^{\prime\prime}])$ from $(y,\eta)$ to $(z,\zeta)$. Choose a finite open covering 
$$
\beta_{\out}([0,s^{\prime\prime}]) \subseteq \bigcup_{j=0}^{l} V_{j}, 
$$
where $V_j$ are open conic neighborhoods in $T^{\ast}M\b 0$ such that
\begin{itemize}
  \item $(y,\eta)\in V_1$ and $(z,\zeta)\in V_n$.
  \item $V_j$ only intersects $V_{j-1}$ and $V_{j+1}$, where $V_{-1} = V_{l+1}= \varnothing$ and $j=0,1,\dots,l$.
  \item There exists a partition of $[0,s^{\prime\prime}]$ expressed by $0= s_{0}<s_1<\cdots<s_{l}:=s^{\prime\prime}$ such that $\beta_{\out}([s_{j-1},s_j])\subseteq V_{j}$ for $j=0,1,\dots,l$.
\end{itemize}
The transition functions on the Keller-Maslov bunlde over $\Lambda_1$ are constants, which are powers of $\imath$. Then, we can locally trivialize the Keller-Maslov bundle on $\Lambda_1\cap V_j$ by $\imath^{m_j}$ for some $m_j\in\Z$. Combining this fact and \eqref{eqn: local expression of v_123} and \eqref{eqn: trivialization of the half-density bundle on Lambda_1}, we have
\begin{equation}\label{eqn: principal symbol of v_123 in a small conic neighborhood}
  \sigma\l[v_{(123)}\r]|_{V_j\cap \Lambda_1} = \imath^{m_j}(a_j |d\xi|^{1/2})|_{V_j \cap \Lambda_1},\quad a_{j}\in S^{3\mu-1/2}(\Lambda_1;E).
\end{equation}
Then, we solve the transport equation \eqref{eqn: transport equation for principal symbol} for $\sigma\l[v_{(123)}\r]$ on $\beta_{\out}([0,s^{\prime\prime}])$ by patching the segments $\beta_{\out}([s_{j-1},s_{j}])$. More precisely, write the half-density
$$
  \omega_{j} = |d\xi|^{1/2},\quad\text{on } \Lambda_1 \cap V_j.
$$
Following \cite[Section 2.6]{CLOP}, we have
$$
 \imath^{-m_j}\omega_{j}\mathscr{L}_{H_P}\sigma\l[v_{(123)}\r] =H_{P}a_j+a_j \omega_{j}^{-1} \mathscr{L}_{H_P}\omega_{j}.
$$
By restricting the equation on $\beta_{\out}(s)$ for $s\in (s_{j-1},s_j)
$, we have
$$
 \mathscr{L}_{H_P}\l( \sigma\l[v_{(123)}\r]\r)\circ\beta_{\out}(s) = e^{-\rho_{j}(s)}\partial_{s}\l(e^{\rho_{j}(s)}a_{j}(\beta_{\out}(s)) \r),\quad s\in (s_{j-1},s_j).
$$
where
$$
 \rho_{j}(s) = \int_{s_{j-1}}^{s} \l(\omega_{j}^{-1} \mathscr{L}_{H_P}\omega_{j}\r)(\beta_{\out}(s))ds,\quad s\in (s_{j-1},s_j).
$$
Then, the transport equation \eqref{eqn: transport equation for principal symbol} on $\beta_{\out}(s)$ for $s\in (s_{j-1},s_j)$ with $1\le j\le n$ reduces to a family of equations
$$
\partial_{s}\l(e^{\rho_{j}(s)}a_{j}(\beta_{\out}(s)) \r)+\langle A(\gamma_{y,w}(s)),\dot{\gamma}_{y,w}(s)\rangle = 0,\quad s\in (s_{j-1},s_j),
$$
with initial value $a_{j}(\beta_{\out}(s_{j-1}))$. By \cite[(36)]{CLOP} and the definition of the parallel transport map in \eqref{eqn: parallel transport map in a general parameter}, the equation is solved as
\begin{multline}\label{eqn: solution of the transport equation of amplitude}
 \imath^{-m_j} e^{\rho_{j}(s_j)}\l(\omega_{j}^{-1}\sigma\l[v_{(123)}\r] \r)(\beta_{\out}(s_j)) \\
 =\imath^{-m_j}e^{\rho_{j}(s_{j-1})}\mathbf{P}^{A}_{\gamma_{y,w}([s_{j-1},s_{j}])}\l(\omega_{j}^{-1}\sigma\l[v_{(123)}\r] \r)(\beta_{\out}(s_{j-1})).
\end{multline}
Since $\omega_j$ is a positive valued smooth function on $\Lambda_1 \cap V_j$, we can take $\omega_{j}^{-1}$ out of the parallel transport map as follows
\begin{multline*}
\sigma\l[v_{(123)}\r](\beta_{\out}(s_j)) \\
= e^{\rho_{j}(s_{j-1})-\rho_{j}(s_j)}\omega_{j}(\beta_{\out}(s_j))\omega^{-1}(\beta_{\out}(s_{j-1}))\mathbf{P}^{A}_{\gamma_{y,w}([s_{j-1},s_{j}])}\sigma\l[v_{(123)}\r] (\beta_{\out}(s_{j-1})).
\end{multline*}
Denote the nonvanishing complex valued factor for $j=1,2,\dots,n$,
$$
 C_{j} = e^{\rho_{j}(s_{j-1})-\rho_{j}(s_j)}\omega_{j}(\beta_{\out}(s_j))\omega_{j}^{-1}(\beta_{\out}(s_{j-1}))\in \C\b\{0\}.
$$
Recall the bicharacteristics $\beta_{\out}$ in \eqref{eqn: bicharacteristic}. By patching the solution \eqref{eqn: solution of the transport equation of amplitude} along the partition $\beta_{\out}([s_{j-1},s_j])$ for $j=1,2,\dots,n$, together with the property of the patching of the parallel transport map in \eqref{eqn: patching property of the parallel transport map}, we have
\begin{align*}
\sigma\l[v_{(123)}\r](z,\zeta) &= C_1C_2\cdots C_n\mathbf{P}^{A}_{\gamma_{y,w}([0,s_1])}\mathbf{P}^{A}_{\gamma_{y,w}([s_1,s_2])}\cdots\mathbf{P}^{A}_{\gamma_{y,w}([s_{n-1},s^{\prime\prime}])}\sigma\l[v_{(123)}\r](y,\eta)\\
&=C_1C_2\cdots C_n \mathbf{P}^{A}_{\gamma_{y,w}([0,s^{\prime\prime}])}\sigma\l[v_{(123)}\r](y,\eta).
\end{align*}

Combining it with the expression $\sigma[v_{(123)}]$ at $(y,\eta)$ in \eqref{eqn: the principal symbol of v_123 at (y,eta)}, we have the principal symbol of $v_{(123)}$ at $(z,\zeta)$ as follows.
$$
 \sigma\l[v_{(123)}\r](z,\zeta) = C^{\prime}\tilde{\omega}_{\out}(z,\zeta)\tilde{\alpha}_{(1)}\tilde{\alpha}_{(2)}\tilde{\alpha}_{(3)}\alpha_{\out} \sum_{\tau\in S(3)} \mathrm{Re}\l\langle  \tilde{c}_{(\tau(1))}^{\into},\tilde{c}_{(\tau(2))}^{\into} \r\rangle \mathbf{P}^{A}_{\gamma_{y,w}([0,s^{\prime\prime}])}
 \tilde{c}_{(\tau(3))}^{\into},
$$
where $C^{\prime}\in \C\b \{0\}$ is a constant.

Recall the definition of $\tilde{c}_{(j)}^{\into}$, in which the constant vector $\tilde{c}_{(j)}$ satisfies the identity \eqref{eqn: equality of tilde c_j by construction}. Set $ \tilde{c} := \tilde{c}_{(j)}$ for $j=1,2,3$. Taking $r\to0+$ in \eqref{eqn: perturbation of the vector}, we have the convergence in $L^{+}M$ as follows
$$
\l(x_{(k)},\xi_{(k)} \r)\to \l(x_{(1)},\xi_{(1)} \r):= (x,\xi),\quad k=2,3.
$$
Since the parallel transport maps depend smoothly on the initial points and vectors of the geodesics, it follows that
\begin{align}
\notag \lim_{r\to0+}\sigma\l[v_{(123)}\r](z,\zeta) &= 3 \tilde{\omega}_{\out}(z,\zeta)\tilde{\alpha}_{(1)}\tilde{\alpha}_{(2)}\tilde{\alpha}_{(3)}\alpha_{\out}  \l|\mathbf{P}^{A}_{\gamma_{x,\xi}([0,s^{\prime}])}\tilde{c} \r|^{2} \mathbf{P}^{A}_{\gamma_{x,\xi}([0,s^{\prime}])}\mathbf{P}^{A}_{\gamma_{y,w}([0,s^{\prime\prime}])}\tilde{c}\\
\label{eqn: principal symbol of v_123 after r tends to 0} &=3 \tilde{\omega}_{\out}(z,\zeta)\tilde{\alpha}_{(1)}\tilde{\alpha}_{(2)}\tilde{\alpha}_{(3)}\alpha_{\out}   \mathbf{P}^{A}_{\gamma_{x,\xi}([0,s^{\prime}])}\mathbf{P}^{A}_{\gamma_{y,w}([0,s^{\prime\prime}])}\tilde{c}.
\end{align}
The last equality follows from the fact that parallel transport is $U(n)$-valued, together with the fact that the module of $\tilde{c}$ is $1$ from \eqref{eqn: equality of tilde c_j by construction} by construction of the source.

Recall from \eqref{eqn: regularity of w_123} that $(z,\zeta)\notin \WF(w_{(123)})$. Choose the pseudodifferential operator $\tilde{\chi}$ such that
$$
 \sigma[\tilde{\chi}](z,\zeta) =1 ,\quad \WF(\tilde{\chi}) \cap \WF\l(w_{(123)}\r) = \varnothing.
$$
By the decomposition of $u_{(123)}$ in \eqref{eqn: decompose u_123}, we conclude that
\begin{equation}\label{eqn: microlocal structure of u_123}
 \tilde{\chi}u_{(123)} = \tilde{\chi}v_{(123)} \mod C^{\infty}  \quad\text{and}\quad \tilde{\chi} u_{(123)} \in I^{3\mu+1/2}(\rotom;\Lambda_1,E\otimes\Omega^{1/2}).
\end{equation}
Combining \eqref{eqn: principal symbol of v_123 after r tends to 0} and \eqref{eqn: microlocal structure of u_123}, we obtain that
\begin{multline*}
 \lim_{r\to 0+}\sigma\l[\partial_{\e_{(1)}} \partial_{\e_{(2)}}\partial_{\e_{(3)}}L_{A}\l(\e_{(1)}f_{(1)}+\e_{(2)}f_{(2)}+\e_{(3)}f_{(3)} \r)\r](z,\zeta) \\
 = 3 \tilde{\omega}_{\out}(z,\zeta)\tilde{\alpha}_{(1)}\tilde{\alpha}_{(2)}\tilde{\alpha}_{(3)}\alpha_{\out}   \mathbf{P}^{A}_{\gamma_{x,\xi}([0,s^{\prime}])}\mathbf{P}^{A}_{\gamma_{y,w}([0,s^{\prime\prime}])}\tilde{c}. 
\end{multline*}

Since the factor $\tilde{\omega}_{\out}(z,\zeta)\tilde{\alpha}_{(1)}\tilde{\alpha}_{(2)}\tilde{\alpha}_{(3)}\alpha_{\out}$ is a nonvanishing complex number independent of the connections $A$ and $B$, the coincidence of the source-to-solution maps $L_{A} = L_{B}$ implies that
$$
  \mathbf{P}^{A}_{\gamma_{x,\xi}([0,s^{\prime}])}\mathbf{P}^{A}_{\gamma_{y,w}([0,s^{\prime\prime}])}\tilde{c} =  \mathbf{P}^{B}_{\gamma_{x,\xi}([0,s^{\prime}])}\mathbf{P}^{B}_{\gamma_{y,w}([0,s^{\prime\prime}])}\tilde{c}
$$
Since the above holds for all $\tilde{c}\in \C^{n}$ with $|\tilde{c}| =1$, we conclude that
 $$
  S^{A}_{\gamma_{x,\xi}([0,s^{\prime}])\gamma_{y,w}([0,s^{\prime\prime}])}=S^{B}_{\gamma_{x,\xi}([0,s^{\prime}])\gamma_{y,w}([0,s^{\prime\prime}])}
  $$  
  and the theorem follows.

\end{proof}

\begin{proof}[Proof of Theorem \ref{thm: main theorem}]
 The result follows by combining the recovery of the broken light-ray transform from the source-to-solution map in Theorem \ref{thm: from source-to-solution map to broken light-ray transforms} with the inversion of the transform established in Theorem \ref{thm: inversion of the light-ray transform}. 
\end{proof}

\bibliographystyle{abbrv}
\bibliography{reference}

\end{document}